\documentclass[12pt,a4paper]{article}

\usepackage[german]{babel}
\usepackage[utf8]{inputenc}
\usepackage[T1]{fontenc}
\usepackage{amsmath}
\usepackage{amsfonts}
\usepackage{amssymb}
\usepackage{amsthm}
\usepackage{mathtools}
\usepackage{parskip} 
\usepackage[left=3cm,right=3cm,top=3.4cm,bottom=4.5cm]{geometry}
\usepackage{color}
\usepackage{bibgerm,cite}
\usepackage[bottom,hang]{footmisc}
\usepackage[arrow, matrix, curve]{xy}

\swapnumbers
\numberwithin{equation}{section}

\theoremstyle{definition}
\newtheorem{definition}{Definition}[section]
\newtheorem{beispiel}[definition]{Beispiel}
\newtheorem{beispiele}[definition]{Beispiele}
\newtheorem{bemerkung}[definition]{Bemerkung}
\theoremstyle{plain}
\newtheorem{satz}[definition]{Satz}
\newtheorem{lemma}[definition]{Lemma}
\newtheorem{korollar}[definition]{Korollar}

\newcommand{\ftnote}[1]{\setlength{\footnotemargin}{6pt}\footnote{#1 \vspace{6pt}}}
\newcommand{\N}{\mathbb{N}}
\newcommand{\R}{\mathbb{R}}
\newcommand{\eps}{\varepsilon}
\newcommand{\ee}{\emph}
\newcommand{\E}{\mathcal{E}^k(A,E)}
\newcommand{\EE}{\mathcal{E}}
\newcommand{\J}{\mathcal{J}}
\newcommand{\al}{\vert\alpha\vert}
\newcommand{\bet}{\vert\beta\vert}
\newcommand{\p}{\partial^{\alpha}}
\newcommand{\A}{\mathcal{A}}
\newcommand{\B}{\mathcal{B}}

\begin{document}

\begin{center}
\section*{Der Whitneysche Fortsetzungssatz für vektorwertige Funktionen}
\large{Johanna Jakob}
\end{center}
\vspace{0.5cm}
\begin{abstract}
\noindent Sei $k\in\N_0\cup\{\infty\}$. Der Whitneysche Fortsetzungssatz besagt, dass sich jeder reellwertige Whitneysche $k$-Jet auf einer abgeschlossenen Teilmenge $A\subseteq\R^n$ zu einer $C^k$-Funktion auf $\R^n$ fortsetzen lässt. Ausgehend von Whitneys Originalarbeit \cite{Wh}  beweisen wir analoge Resultate für Jets und Funktionen mit Werten in einem reellen Hausdorffschen lokalkonvexen topologischen Vektorraum $E$.
Im Falle $k<\infty$ erhalten wir einen stetigen linearen Fortsetzungsoperator,
also eine stetige lineare Rechtsinverse der Abbildung $C^k(\R^n,E)\to \EE^k(A,E),\ f\mapsto ((\p f)\vert_A)_{\al\leq k}.$
Die Fortsetzung von $E$-wertigen Whitneyschen $\infty$-Jets gelingt uns unter der Annahme, dass $E$ metrisierbar ist.
Sei $M$ eine endlichdimensionale $C^k$-Mannigfaltigkeit (welche einen \glqq rauen Rand\grqq\text{ }haben mag). Wir befassen uns mit der Frage, wie man Whitneysche $k$-Jets, die auf einer abgeschlossenen Teilmenge $A\subseteq M$ definiert sind, zu $C^k$-Funktionen auf $M$ fortsetzen kann. Insbesondere beweisen wir für $k<\infty$, dass die Einschränkung $C^k(M,E)\to C^k(A,E),\ f\mapsto f\vert_A$ für jede abgeschlossene Untermannigfaltigkeit $A\subseteq M$ eine stetige lineare Rechtsinverse besitzt, wenn $M$ $C^k$-parakompakt und $A$ lokalkompakt (oder $M$ regulär und $A$ kompakt) ist. 
\ee{MSC2020:} 46E10, 54C20, 58A20 (primär); 46E40, 46G05, 58C25.
\end{abstract}

\section*{Einleitung}
\addcontentsline{toc}{section}{\protect\numberline{}Einleitung}

Wir beweisen Verallgemeinerungen des Whitneyschen Fortsetzungssatzes für Jets mit Werten in lokalkonvexen topologischen Vektorräumen.

Es seien $k\in\N_0\cup\{\infty\}$ und $E$ ein reeller Hausdorffscher lokalkonvexer topologischer Vektorraum. Ein \ee{Whitneyscher $k$-Jet} auf einer Teilmenge $U\subseteq\R^n$ ist eine Familie $(f_{\alpha})_{\al\leq k}$ von stetigen Funktionen $f_{\alpha}\colon U\to E$ (indiziert durch $\alpha=(\alpha_1,\ldots,\alpha_n)\in(\N_0)^n$ mit $\vert\alpha\vert=\alpha_1+\ldots+\alpha_n\leq k$), welche 
in gleicher Weise wie die partiellen Ableitungen $\p f$ einer $C^k$-Funktion $f\colon\R^n\to E$ Taylorentwicklungen besitzen (s. Definition \ref{p}).
Wir schreiben $\EE^k(U,E)$ für den Raum solcher Whitneyschen $k$-Jets.

Whitney \cite{Wh} hat in den 1930er Jahren bewiesen, dass man jeden reellwertigen Whitneyschen $k$-Jet auf einer abgeschlossenen Teilmenge $A\subseteq\R^n$ zu einer $C^k$-Funktion auf $\R^n$ fortsetzen kann. In der vorliegenden Arbeit zeigen wir, dass solches auch für vektorwertige Whitneysche $k$-Jets möglich ist:

\ee{Es seien $U\subseteq\R^n$ eine lokalkonvexe Teilmenge mit dichtem Inneren und $A\subseteq U$ eine relativ abgeschlossene, lokalkompakte Teilmenge.
Ist $k<\infty$ oder $E$ metrisierbar, so ist die Abbildung
\[C^k(U,E)\to\EE^k(A,E),\quad f\mapsto ((\p f)\vert_A)_{\al\leq k}\]
surjektiv. Im Falle $k<\infty$ besitzt sie eine stetige lineare Rechtsinverse.}

Wenn $A$ zusätzlich lokalkonvex ist und ein dichtes Inneres hat, dann sind $C^k(A,E)$ und $\EE^k(A,E)$ als topologische Vektorräume isomorph, woraus wir folgern, dass für $k<\infty$ eine stetige lineare Rechtsinverse der Einschränkung
\[C^k(U,E)\to C^k(A,E),\quad f\mapsto f\vert_A\]
existiert.

Weiterhin konstruieren wir Fortsetzungen von Whitneyschen $k$-Jets, welche auf Teilmengen einer endlichdimensionalen $C^k$"--Man"-nig"-fal"-tig"-keit $M$ mit rauem Rand definiert sind. Der letztere Begriff umfasst gewöhnliche Mannigfaltigkeiten sowie Mannigfaltigkeiten mit Rand. Sei $\A$ der zu $M$ gehörige Atlas mit Karten $\varphi\colon U_{\varphi}\to V_{\varphi}\subseteq\R^n$.
Ein \ee{Whitneyscher $k$-Jet} auf einer Teilmenge $A\subseteq M$ ist eine Familie $(f_{\varphi})_{\varphi\in\A}$ von Whitneyschen $k$-Jets \[f_{\varphi}=(f_{\varphi,\alpha})_{\al\leq k}\in\EE^k(\varphi(A\cap U_{\varphi}),E),\] welche in dem Sinne miteinander \ee{korrespondieren}, dass man für alle $\al\leq k$, $x\in A$ und Karten $\varphi,\psi\in\A$ um $x$ den Wert $f_{\varphi,\alpha}(\varphi(x))$ in gleicher Weise aus $(f_{\psi,\beta}(\psi(x)))_{\bet\leq\al}$ berechnen kann, wie man für eine $C^k$-Funktion $f\colon M\to E$ die Ableitung \[\p(f\circ\varphi^{-1})(\varphi(x))=\p\big((f\circ\psi^{-1})\circ(\psi\circ\varphi^{-1})\vert_{\varphi(U_{\varphi}\cap U_{\psi})}\big)(\varphi(x))\] mittels der Fa\`a-di-Bruno-Formel aus den Ableitungen $(\partial^{\beta}(f\circ\psi^{-1})(\psi(x)))_{\bet\leq\al}$ berechnet (s. Definition \ref{eee}). 
Sei $\EE^k_M(A,E)$ der Raum jener Whitneyschen $k$-Jets. Wir beweisen den folgenden Fortsetzungssatz:

\ee{Sei $A\subseteq M$ eine abgeschlossene Teilmenge. Angenommen, eine der beiden folgenden Bedingungen ist erfüllt:
\begin{itemize}
\item[(a)] $M$ ist $C^k$-parakompakt und $A$ lokalkompakt.
\item[(b)] $M$ ist regulär als topologischer Raum und $A$ kompakt.
\end{itemize}
Wenn $k<\infty$ oder $E$ metrisierbar ist, so ist die Abbildung
\[C^k(M,E)\to\EE^k_M(A,E),\quad f\mapsto \big((\p(f\circ\varphi^{-1}))\vert_{\varphi(A\cap U_{\varphi})}\big)_{\substack{\varphi\in\A\\ \al\leq k}}\]
surjektiv. Im Falle $k<\infty$ besitzt sie eine stetige lineare Rechtsinverse.}

Für jede abgeschlossene Untermannigfaltigkeit $A\subseteq M$, welche auch \glqq voll"-di"-men"-sio"-nal\grqq\text{ }(im Sinne von Definition \ref{ccc}) sein mag, existiert eine Einbettung von topologischen Vektorräumen $C^k(A,E)\hookrightarrow\EE^k_M(A,E)$. Daraus schließen wir, dass für $k<\infty$ eine stetige lineare Rechtsinverse der Einschränkung
\[C^k(M,E)\to C^k(A,E),\quad f\mapsto f\vert_A\]
existiert, wenn eine der Bedingungen (a) oder (b) erfüllt ist.

Eine Einordnung in die Literatur wird im Schlusskapitel gegeben. Der vorliegenden Abhandlung liegt die Masterarbeit der Autorin \cite{J} zugrunde, welche von Professor Helge Glöckner an der Universität Paderborn betreut wurde.

\section{Vorbereitung – Differenzierbarkeit vektorwertiger Funktionen}

In diesem Kapitel sammeln wir Grundlagen über den Dif"-fe"-ren"-tial"-kal"-kül in lokalkonvexen Räumen, den Ausführungen von Glöckner und Neeb \cite{Gl, Gl2} folgend.

Wir schreiben $\N\coloneqq\{1,2,\ldots\}$ und $\N_0\coloneqq\N\cup\{0\}$. Reelle Hausdorffsche lokalkonvexe topologische Vektorräume nennen wir verkürzend \ee{lokalkonvexe Räume.}
Es seien stets $E$ ein lokalkonvexer Raum und $n\in\N$. Wir bezeichnen mit $\lVert\cdot\rVert$ die euklidische Norm und mit $\lVert\cdot\rVert_{\infty}$ die Maximumnorm auf $\R^n$.
Für alle nichtleeren Teilmengen $U,V\subseteq\R^n$ und $x\in\R^n$ schreiben wir $d(U,V)\coloneqq\inf\{\lVert u-v\rVert\colon u\in U, v\in V\}$ und $d(x,V)\coloneqq d(\{x\},V)$ sowie $d_{\infty}(U,V)\coloneqq\inf\{\lVert u-v\rVert_{\infty}\colon u\in U,v\in V\}$ und $d_{\infty}(x,V)\coloneqq d_{\infty}(\{x\},V)$. 
Für alle kompakten Teilmengen $K\subseteq\R^n$ sei $\text{diam}(K)\coloneqq \sup\{\lVert x-y\rVert\colon x,y\in K\}$ der Durchmesser von $K$. Wir bezeichnen mit $e_1\coloneqq(1,0,\ldots,0),\ldots,e_n\coloneqq (0,\ldots,0,1)$ die Einheitsvektoren in $\R^n$ (bzw. die entsprechenden Multiindizes in $(\N_0)^n$, je nach Situation).
Ist $U\subseteq X$ eine Teilmenge eines topologischen Raumes $X$, so schreiben wir $U^{\circ}$ für das Innere, $\overline{U}$ für den Abschluss und $\partial U=\overline{U}\setminus U^{\circ}$ für den Rand von $U$ bezüglich $X$. Man sagt, dass $U$ ein \ee{dichtes Inneres} hat, wenn $U^{\circ}$ dicht in $U$ liegt.
Eine Teilmenge $U\subseteq E$ heißt \ee{lokalkonvex}, wenn jedes $x\in U$ eine konvexe Umgebung in $U$ besitzt. Wir notieren $\frac{x}{t}\coloneqq\frac{1}{t}x$ für alle $x\in E$ und $t\in\R\setminus\{0\}$. Es seien $B^q_r(x)\coloneqq\{y\in E\colon q(y-x)< r\}$ die offene Kugel und $\overline{B}^q_r(x)\coloneqq\{y\in E\colon q(y-x)\leq r\}$ die abgeschlossene Kugel um $x\in E$ vom Radius $r>0$ bezüglich einer stetigen Halbnorm $q$ auf $E$. Wenn $(E,\lVert\cdot\rVert_E)$ ein normierter Raum ist, setzen wir $B_r(x)\coloneqq B_r^E(x)\coloneqq B_r^{\lVert\cdot\rVert_E}(x)$ und $\overline{B}_r(x)\coloneqq \overline{B}^E_r(x)\coloneqq \overline{B}_r^{\lVert\cdot\rVert_E}(x)$.
Ist eine stetige Kurve $\gamma\colon [a,b]\to E$ mit $a<b$ in $\R$ gegeben, so bezeichnen wir mit $\int_a^b\gamma(t)dt$ das \ee{schwache Integral} von $\gamma$, sofern es existiert; dies ist der (notwendig eindeutig bestimmte) Vektor in $E$, für welchen $\lambda(\int_a^b\gamma(t)dt)=\int_a^b\lambda(\gamma(t))dt$ für alle stetigen linearen Funktionale $\lambda\colon E\to \R$ gilt.
Produkte von topologischen Räumen seien mit der Produkttopologie versehen und Teilmengen von topologischen Räumen mit der induzierten Topologie. Partitionen der Eins auf einem topologischen Raum seien wie üblich definiert und immer als lokalendlich angenommen.
Ist $f\colon X\to F$ eine Funktion auf einem topologischen Raum $X$ in einen Vektorraum $F$, so schreiben wir $\text{supp}(f)\coloneqq\overline{\{x\in X\colon f(x)\neq 0\}}$ für den Träger von $f$.
Für Multiindizes $\alpha=(\alpha_1,\ldots,\alpha_n),\ \beta=(\beta_1,\ldots,\beta_n)\in(\N_0)^n$ notieren wir 
$\lvert\alpha\rvert\coloneqq\alpha_1+\ldots+\alpha_n $, $\alpha!\coloneqq\alpha_1!\cdots\alpha_n!$, $x^{\alpha}\coloneqq x_1^{\alpha_1}\cdots x_n^{\alpha_n}$ für $x=(x_1,\ldots,x_n)\in\R^n$, $\alpha\leq\beta$, wenn $\alpha_i\leq\beta_i$ für alle $i\in\{1,\ldots,n\}$, $\alpha+\beta\coloneqq(\alpha_1+\beta_1,\ldots,\alpha_n+\beta_n)$ sowie $\binom{\alpha}{\beta}\coloneqq\binom{\alpha_1}{\beta_1}\cdots\binom{\alpha_n}{\beta_n}$.

\subsection*{Grenzwerte von Funktionen}
\addcontentsline{toc}{subsection}{\protect\numberline{}Grenzwerte von Funktionen}

\begin{definition}
Es seien $f\colon I\to E$ eine Funktion auf einer Teilmenge $I\subseteq\R$ und $t_0\in \R$ ein Häufungspunkt\ftnote{D.h. es existiert eine Folge $(t_n)_{n\in\N}$ in $I\setminus\{t_0\}$ mit $\lim\limits_{n\to\infty}t_n=t_0.$} von $I$. Man nennt einen (notwendig eindeutig bestimmten) Vektor $v\in E$ den \ee{Grenzwert} von $f$ in $t_0$ und schreibt
\[\lim_{t\to t_0}f(t)=v\] oder \[f(t)\to v \text{ für }t\to t_0,\]
wenn die beiden folgenden äquivalenten Bedingungen erfüllt sind:
\begin{itemize}
\item[(a)] Für jede Folge $(t_n)_{n\in\N}$ in $I\setminus\{t_0\}$ mit $\lim\limits_{n\to\infty}t_n=t_0$ gilt $\lim\limits_{n\to\infty} f(t_n)=v$.
\item[(b)] Für alle stetigen Halbnormen $q$ auf $E$ und $\eps>0$ gibt es ein $\delta>0$ derart, dass \[q(f(t)-v)<\eps\] für alle $t\in I$ mit $0<\lvert t-t_0\rvert<\delta$.
\end{itemize}
\end{definition}

\begin{proof}[Beweis der Äquivalenz]
Es ist nicht schwierig zu prüfen, dass sowohl (a) als auch (b) äquivalent zur Stetigkeit der Funktion 
\[g\colon I\cup\{t_0\}\to E,\quad t\mapsto\begin{cases} f(t) &\text{wenn }t\in I\setminus\{t_0\}; \\v &\text{wenn }t=t_0\end{cases}\]
in $t_0$ sind.
\end{proof}

\subsection*{$C^k$-Kurven}
\addcontentsline{toc}{subsection}{\protect\numberline{}$C^k$-Kurven}

\begin{definition}
Eine stetige Funktion $\gamma\colon I\to E$ auf einem nichtentarteten\ftnote{D.h. $I$ enthält mehr als einen Punkt.} Intervall $I\subseteq\R$ heißt \ee{$C^0$-Kurve}. Eine $C^0$-Kurve $\gamma\colon I\to E$ heißt \ee{$C^1$-Kurve}, wenn für alle $t_0\in I$ der Grenzwert
\[\gamma'(t_0)\coloneqq\lim_{t\to t_0}\frac{\gamma(t)-\gamma(t_0)}{t-t_0}\]
existiert, sprich $\gamma$ \ee{differenzierbar} in $t_0$ ist, und die Funktion $\gamma'\colon I\to E,\ t\mapsto\gamma'(t)$ stetig ist. Sei $\gamma^{(0)}\coloneqq \gamma$. Für $k\in\N$ definiert man rekursiv $\gamma$ als \ee{$C^k$-Kurve}, wenn $\gamma$ eine $C^{k-1}$-Kurve und $\gamma^{(k-1)}$ eine $C^1$-Kurve ist, und setzt $\gamma^{(k)}\coloneqq(\gamma^{(k-1)})'.$ Man nennt $\gamma$ eine \ee{$C^{\infty}$-Kurve} oder \ee{glatt}, wenn $\gamma$ eine $C^k$-Kurve für alle $k\in\N$ ist.
\end{definition}

\subsection*{$C^k$-Funktionen}
\addcontentsline{toc}{subsection}{\protect\numberline{}$C^k$-Funktionen}

Wir werden mit $C^k$-Funktionen $f\colon U\to E$ arbeiten, die auf lokalkonvexen Teilmengen $U\subseteq\R^n$ mit dichtem Inneren definiert sind.

\begin{definition}
Es seien $F$ ein lokalkonvexer Raum und $f\colon U\to E$ eine Funktion auf einer lokalkonvexen Teilmenge $U\subseteq F$ mit dichtem Inneren. Man definiert die \ee{Richtungsableitung} von $f$ an der Stelle $x\in U^{\circ}$ in Richtung $y\in F$ als den Grenzwert
\[df(x,y)\coloneqq (D_yf)(x)\coloneqq\lim_{t\to 0}\frac{f(x+ty)-f(x)}{t},\]
falls dieser existiert. Wenn $f$ stetig ist, nennt man $f$ eine \ee{$C^0$-Funktion}. Sei $k\in\N\cup\{\infty\}$. Man sagt, dass $f$ eine \ee{$C^k$-Funktion} ist, wenn $f$ stetig ist, die iterierte Richtungsableitung
\[d^{(l)}f(x,y_1,\ldots,y_l)\coloneqq(D_{y_l}\cdots D_{y_1}f)(x)\]
für alle $l\in\N$ mit $l\leq k$, $x\in U^{\circ}$ und $y_1,\ldots,y_l\in F$ existiert, die Funktion
\[d^{(l)}f\colon U^{\circ}\times F^l\to E\]
stetig ist und eine (notwendig eindeutig bestimmte) stetige Fortsetzung
\[d^{(l)}f\colon U\times F^l\to E\]
besitzt. Eine $C^{\infty}$-Funktion wird auch \ee{glatte Funktion} genannt. Wir bezeichnen mit $C^k(U,E)$ den reellen Vektorraum der $C^k$-Funktionen $f\colon U\to E$.
\end{definition}

Nach dem Satz von Schwarz \cite[Lemma 1.4.7]{Gl} ist $d^{(l)}f(x,\cdot)\colon F^l\to E$ eine stetige, symmetrische $k$-lineare Funktion für alle $l\in\N$ mit $l\leq k$ und $x\in U$.

Die Komposition zweier $C^k$-Funktionen ist wieder eine $C^k$-Funktion (s. \cite[Lemma 1.4.10]{Gl}).

Ist $\gamma\colon I\to E$ eine Funktion auf einem nichtentarteten Intervall $I\subseteq\R$, so ist $\gamma$ genau dann eine $C^k$-Kurve, wenn $\gamma$ eine $C^k$-Funktion ist (s. \cite[Lemma 1.4.13]{Gl}).

\begin{satz}\label{xx}
Es seien $k\in\N\cup\{\infty\}$ und $f\colon U\to E$ eine stetige Funktion auf einer lokalkonvexen Teilmenge $U\subseteq\R^n$ mit dichtem Inneren. Genau dann ist $f$ eine $C^k$-Funktion, wenn die partielle Ableitung
\[\frac{\partial^l f}{\partial x_{j_l}\cdots\partial x_{j_1}}(x)\coloneqq d^{(l)}f(x,e_{j_1},\ldots,e_{j_l})\]
für alle $l\in\N$ mit $l\leq k$, $x\in U^{\circ}$ und $j_1,\ldots,j_l\in\{1,\ldots,n\}$ existiert, die Funktion
\[\frac{\partial^l f}{\partial x_{j_l}\cdots\partial x_{j_1}}\colon U^{\circ}\to E\]
stetig ist und eine (notwendig eindeutig bestimmte) stetige Fortsetzung
\[\frac{\partial^l f}{\partial x_{j_l}\cdots\partial x_{j_1}}\colon U\to E\]
besitzt. Ist dies der Fall, so gilt
\begin{align}\label{gl.x}d^{(l)}f(x,y_1,\ldots,y_l)=\sum_{j_1,\ldots,j_l=1}^n y_{1,j_1}\cdots y_{l,j_l}\frac{\partial^{l} f}{\partial x_{j_l}\cdots\partial x_{j_1}}(x)\end{align}
für alle $x\in U$ und $y_i=(y_{i,1},\ldots,y_{i,n})\in\R^n$ mit $i\in\{1,\ldots,l\}$.
\end{satz}

Wir schreiben auch
\[\partial^{\alpha}f\coloneqq\frac{\partial^{\alpha}f}{\partial x^{\alpha}}\coloneqq\frac{\partial^{\al} f}{\partial x_1^{\alpha_1}\cdots\partial x_n^{\alpha_n}}\quad\text{sowie}\quad \partial_j f\coloneqq\frac{\partial f}{\partial x_j}\]
für alle $\alpha=(\alpha_1,\ldots,\alpha_n)\in (\N_0)^n$ mit $\al\leq k$ bzw. $j\in\{1,\ldots,n\}$.

Für den Beweis benötigen wir den Mittelwertsatz und die Stetigkeit parameterabhängiger Integrale.

\begin{satz}[\textbf{Mittelwertsatz}]
Es seien $F$ ein lokalkonvexer Raum und $f\colon U\to E$ eine $C^1$-Funktion auf einer lokalkonvexen Teilmenge $U\subseteq F$ mit dichtem Inneren. Dann gilt
\[f(y)-f(x)=\int_0^1 df(x+t(y-x),y-x)\, dt\]
für alle $x,y\in U$, deren Verbindungsstrecke in $U$ liegt.
\end{satz}

\begin{proof}
Siehe \cite[Proposition 1.4.8]{Gl}.
\end{proof}

\begin{lemma}[\textbf{Stetigkeit parameterabhängiger Integrale}]
Seien $P$ ein topologischer Raum und $a<b$ reelle Zahlen. Sei $f\colon P\times[a,b]\to E$ eine stetige Funktion. Wenn das schwache Integral
\[g(p)\coloneqq\int_a^b f(p,t)\, dt\]
für alle $p\in P$ in $E$ existiert, dann ist $g\colon P\to E$ stetig.
\end{lemma}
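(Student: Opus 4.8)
The plan is to reduce the continuity of $g$ to a uniform estimate on the integrand, via the basic fact that a weak integral is controlled, seminorm by seminorm, by the integral of the seminorm. First I would establish this estimate: for every continuous curve $\gamma\colon[a,b]\to E$ whose weak integral exists and every continuous seminorm $q$ on $E$,
\[q\Big(\int_a^b\gamma(t)\,dt\Big)\leq\int_a^b q(\gamma(t))\,dt.\]
To prove it, recall that $q(v)=\sup\lvert\lambda(v)\rvert$, where the supremum is taken over all continuous linear functionals $\lambda\colon E\to\R$ with $\lvert\lambda(x)\rvert\leq q(x)$ for all $x\in E$; this representation follows from the theorem of Hahn--Banach. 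For each such $\lambda$ the defining property of the weak integral gives $\lambda(\int_a^b\gamma(t)\,dt)=\int_a^b\lambda(\gamma(t))\,dt$, and the elementary one-dimensional estimate yields $\lvert\int_a^b\lambda(\gamma(t))\,dt\rvert\leq\int_a^b q(\gamma(t))\,dt$. Taking the supremum over $\lambda$ gives the claim.

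Next I would fix $p_0\in P$. For every $p\in P$ the curve $t\mapsto f(p,t)-f(p_0,t)$ is continuous, and testing against every continuous linear functional $\lambda$ shows (since $E$ is Hausdorff, so functionals separate points) that its weak integral exists and equals $g(p)-g(p_0)$. Applying the estimate above therefore gives
\[q\big(g(p)-g(p_0)\big)\leq\int_a^b q\big(f(p,t)-f(p_0,t)\big)\,dt\]
for every continuous seminorm $q$.

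It remains to make the right-hand side small uniformly in $t$. Given $q$ and $\eps>0$, the map $(p,t)\mapsto q(f(p,t)-f(p_0,t))$ is continuous and vanishes on $\{p_0\}\times[a,b]$. For each $t\in[a,b]$ I would choose, using continuity at $(p_0,t)$, a neighborhood $W_t$ of $p_0$ and an open interval $I_t\ni t$ on which this map stays below $\eps/(b-a)$. By compactness of $[a,b]$ finitely many $I_{t_1},\ldots,I_{t_m}$ cover $[a,b]$, and on the neighborhood $W\coloneqq\bigcap_{i=1}^m W_{t_i}$ of $p_0$ one has $q(f(p,t)-f(p_0,t))<\eps/(b-a)$ for all $t\in[a,b]$. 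Integrating and using the displayed estimate yields $q(g(p)-g(p_0))\leq\eps$ for all $p\in W$, which is exactly continuity of $g$ at $p_0$.

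I expect the first step to be the main obstacle: the seminorm estimate for the weak integral is not immediate from its purely scalar definition and hinges on the Hahn--Banach representation of $q$ as a supremum of dominated functionals. Once this is available, the remaining argument is a routine tube-lemma covering of the compact interval $[a,b]$.
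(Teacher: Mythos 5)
Your proof is correct and complete. Note that the paper itself gives no argument for this lemma at all --- it simply cites \cite[Lemma 1.1.11]{Gl} --- so there is nothing in the paper to diverge from; your two ingredients, the Hahn--Banach estimate $q\big(\int_a^b\gamma(t)\,dt\big)\leq\int_a^b q(\gamma(t))\,dt$ for weak integrals (together with the observation that the weak integral of $t\mapsto f(p,t)-f(p_0,t)$ exists and equals $g(p)-g(p_0)$) and the tube-lemma covering of the compact interval, constitute precisely the standard proof of the cited result and fill in the step the paper outsources.
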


\begin{proof}
Siehe \cite[Lemma 1.1.11]{Gl}.
\end{proof}

\begin{proof}[Beweis von Satz \ref{xx}]
Wir beweisen den Satz für $k<\infty$ und offenes $U$; es ist nicht schwierig zu sehen, wie daraus der allgemeine Fall folgt. Die Notwendigkeit ist klar. Angenommen, alle partiellen Ableitungen von $f$ bis zur Ordnung $k$ existieren und sind stetig. Wir zeigen per Induktion nach $k$, dass alle Richtungsableitungen von $f$ bis zur Ordnung $k$ existieren und \eqref{gl.x} erfüllen; dann sind diese insbesondere stetig, und $f$ ist somit eine $C^k$-Funktion.

Seien $k=1$, $x=(x_1,\ldots,x_n)\in U$ und $y=(y_1,\ldots,y_n)\in\R^n$. 
Wenn der Betrag einer reellen Zahl $t\neq 0$ so klein ist, dass $B_{\lVert ty\rVert}(x)$ im Falle $y\neq 0$ in $U$ liegt, so gilt nach dem Mittelwertsatz
\begin{align*}
&f(x_1+ty_1,\ldots,x_j+ty_j,x_{j+1},\ldots,x_n)-f(x_1+ty_1,\ldots,x_{j-1}+ty_{j-1},x_{j},\ldots,x_n)\\
&=ty_j\int_0^1\frac{\partial f}{\partial x_j}(x_1+ty_1,\ldots,x_{j-1}+ty_{j-1},x_j+sty_j,x_{j+1},\ldots,x_n)\, ds\end{align*}
für alle $j\in\{1,\ldots,n\}.$
Daraus folgt
\begin{align*}
&f(x+ty)-f(x)\\ &=\sum_{j=1}^n (f(x_1+ty_1,\ldots,x_j+ty_j,x_{j+1},\ldots,x_n) -f(x_1+ty_1,\ldots,x_{j-1}+ty_{j-1},x_{j},\ldots,x_n))\\
&=t\sum_{j=1}^n y_j\int_0^1\frac{\partial f}{\partial x_j}(x_1+ty_1,\ldots,x_{j-1}+ty_{j-1},x_j+sty_j,x_{j+1},\ldots,x_n)\, ds,
\end{align*}
also
\begin{align*}
&\frac{f(x+ty)-f(x)}{t}-\sum_{j=1}^n y_j \frac{\partial f}{\partial x_j}(x)\\
&=\sum_{j=1}^n y_j\int_0^1\left(\frac{\partial f}{\partial x_j}(x_1+ty_1,\ldots,x_{j-1}+ty_{j-1},x_j+sty_j,x_{j+1},\ldots,x_n)-\frac{\partial f}{\partial x_j}(x)\right) ds.
\end{align*}
Mit der Stetigkeit parameterabhängiger Integrale konvergiert die untere Summe gegen $0$ für $t\to 0$. Somit erhalten wir
\[df(x,y)=\sum_{j=1}^n y_j \frac{\partial f}{\partial x_j}(x).\]

Nun sei $k>1$. Wir nehmen an, dass alle Richtungsableitungen von $f$ bis zur Ordnung $k-1$ existieren und \eqref{gl.x} erfüllen. Sind $j_1,\ldots,j_{k-1}\in\{1,\ldots,n\}$ gegeben, so existieren alle partiellen Ableitungen von $\frac{\partial^{k-1} f}{\partial x_{j_{k-1}}\cdots\partial x_{j_1}}$ der Ordnung 1 und sind stetig, also gilt nach Induktionsanfang
\[D_y\left(\frac{\partial^{k-1} f}{\partial x_{j_{k-1}}\cdots\partial x_{j_1}}\right)=\sum_{j_k=1}^n y_{j_k} \frac{\partial^{k} f}{\partial x_{j_{k}}\cdots\partial x_{j_1}}\]
für alle $y=(y_1,\ldots,y_n)\in\R^n$. Damit folgt
\begin{align*}
\sum_{j_1,\ldots,j_k=1}^n y_{1,j_1}\cdots y_{k,j_k}\frac{\partial^{k} f}{\partial x_{j_k}\cdots\partial x_{j_1}}
&=\sum_{j_1,\ldots,j_{k-1}=1}^n y_{1,j_1}\cdots y_{k-1,j_{k-1}}\bigg(\sum_{j_k=1}^n y_{k,j_k} \frac{\partial^{k} f}{\partial x_{j_{k}}\cdots\partial x_{j_1}}\bigg)\\
&=\sum_{j_1,\ldots,j_{k-1}=1}^n y_{1,j_1}\cdots y_{k-1,j_{k-1}}D_{y_k}\left(\frac{\partial^{k-1} f}{\partial x_{j_{k-1}}\cdots\partial x_{j_1}}\right)\\
&=D_{y_k}\bigg(\sum_{j_1,\ldots,j_{k-1}=1}^n y_{1,j_1}\cdots y_{k-1,j_{k-1}}\frac{\partial^{k-1} f}{\partial x_{j_{k-1}}\cdots\partial x_{j_1}}\bigg)\\
&=D_{y_k}D_{y_{k-1}}\cdots D_{y_1}f
\end{align*}
für alle $y_i=(y_{i,1},\ldots,y_{i,n})\in\R^n$ mit $i\in\{1,\ldots,k\}$, was den Beweis beendet.
\end{proof}

\subsection*{Leibnizformel, Kettenregel und Fa\`a-di-Bruno-Formel}
\addcontentsline{toc}{subsection}{\protect\numberline{}Leibnizformel, Kettenregel und Fa\`a-di-Bruno-Formel}

Wir behandeln hier nützliche Werkzeuge für die Berechnung von Ableitungen.

\begin{satz}[\textbf{Leibnizformel}]
Es seien $k\in\N\cup\{\infty\}$, $\phi\colon E_1\times E_2\to F$ eine stetige bilineare Abbildung zwischen lokalkonvexen Räumen und $f_1\colon U\to E_1$ sowie ${f_2\colon U\to E_2}$ beide $C^k$-Funktionen auf einer lokalkonvexen Teilmenge $U\subseteq\R^n$ mit dichtem Inneren. Dann ist \[\phi(f_1,f_2)\colon U\to F,\quad x\mapsto \phi(f_1(x),f_2(x))\] eine $C^k$-Funktion, und für alle $\lvert\alpha\rvert\leq k$ gilt
\begin{equation}\label{gl.a}
\p(\phi(f_1,f_2))=\sum_{\beta\leq\alpha}\binom{\alpha}{\beta} \phi(\partial^{\beta}f_1,\partial^{\alpha-\beta}f_2).\end{equation}
\end{satz}

\begin{proof}
Wir zeigen den Satz für offenes $U$; es ist leicht, damit auf den allgemeinen Fall zu schließen. Für $\alpha=0$ ist \eqref{gl.a} erfüllt. Sei $j\in\{1,\ldots,n\}$. Für alle $x\in U$ und $t\in\R$ mit $x+t e_j\in U$ gilt
\begin{align*}&\frac{1}{t}(\phi(f_1(x+t e_j),f_2(x+t e_j))-\phi(f_1(x),f_2(x)))\\
&=\frac{1}{t}(\phi(f_1(x+t e_j)-f_1(x),f_2(x+t e_j))+\phi(f_1(x),f_2(x+t e_j)-f_2(x)))\\ 
&=\phi\left(\frac{f_1(x+t e_j)-f_1(x)}{t},f_2(x+t e_j)\right)+\phi\left(f_1(x),\frac{f_2(x+t e_j)-f_2(x)}{t}\right)\\
&\to\phi(\partial_j f_1(x),f_2(x))+\phi(f_1(x),\partial_j f_2(x))\ \text{für }t\to 0,\end{align*}
womit \eqref{gl.a} für $\alpha=e_j$ gezeigt ist.
Angenommen, für ein $\alpha\in(\N_0)^n$ mit $\al<k$ ist \eqref{gl.a} erfüllt. Für alle $j\in\{1,\ldots,n\}$ und $\beta\in(\N_0)^n$ gilt
$\binom{\alpha+e_j}{\beta}=\binom{\alpha}{\beta}+\binom{\alpha}{\beta-e_j}$ und somit
\begin{align*}
&\sum_{\beta\leq\alpha+e_j}\binom{\alpha+e_j}{\beta}\phi(\partial^{\beta}f_1,\partial^{\alpha+e_j-\beta}f_2)\\
&=\sum_{\beta\leq\alpha}\binom{\alpha}{\beta}\phi(\partial^{\beta}f_1,\partial^{\alpha+e_j-\beta}f_2)
+\sum_{e_j\leq\beta\leq\alpha+e_j}\binom{\alpha}{\beta-e_j}\phi(\partial^{\beta}f_1,\partial^{\alpha+e_j-\beta}f_2)\\
&=\sum_{\beta\leq\alpha}\binom{\alpha}{\beta}(\phi(\partial^{\beta}f_1,\partial^{\alpha+e_j-\beta}f_2)+\phi(\partial^{\beta+e_j}f_1,\partial^{\alpha-\beta}f_2))\\
&=\sum_{\beta\leq\alpha}\binom{\alpha}{\beta}\partial_j(\phi(\partial^{\beta}f_1,\partial^{\alpha-\beta}f_2))
=\partial_j\bigg(\sum_{\beta\leq\alpha}\binom{\alpha}{\beta}\phi(\partial^{\beta}f_1,\partial^{\alpha-\beta}f_2)\bigg)\\
&=\partial_j\partial^{\alpha}(\phi(f_1,f_2))=\partial^{\alpha+e_j}(\phi(f_1,f_2)).
\end{align*}
Per Induktion folgt \eqref{gl.a} für alle $\vert\alpha\vert\leq k$. Da alle partiellen Ableitungen von $\phi(f_1,f_2)$ bis zur Ordnung $k$ stetig sind, ist $\phi(f_1,f_2)$ eine $C^k$-Funktion.
\end{proof}

\begin{satz}[\textbf{Kettenregel für partielle Ableitungen}]
Es seien $k\in\N\cup\{\infty\}$, $U\subseteq\R^s$ und $V\subseteq\R^t$ lokalkonvexe Teilmengen mit dichtem Inneren und $g=(g_1,\ldots,g_t)\colon U\to V\subseteq\R^t$ sowie $f\colon V\to E$ beide $C^k$-Funktionen.
Dann gilt
\[\partial_j(f\circ g)(x)=\sum_{i=1}^t\partial_j g_i(x)\partial_if(g(x))\]
für alle $j\in\{1,\ldots,s\}$ und $x\in U$.
\end{satz}

\begin{proof}
Folgt aus \cite[Proposition 1.4.10]{Gl}.
\end{proof}

Für die Berechnung höherer Ableitungen $\partial^{\alpha}(f\circ g)$ oder $d^{(l)}(f\circ g)$ von Kompositionen dient die Fa\`a-di-Bruno-Formel in lokalkonvexen Räumen, welche wir im Folgenden von Glöckner und Neeb \cite{Gl} zitieren. Es sei auch die Arbeit von Clark und Houssineau \cite{CH} erwähnt.

Für $k\in\N$ und $j\in\{1,\ldots,k\}$ schreiben wir $P_{k,j}$ für die Menge aller Partitionen $P=\{I_1,\ldots,I_j\}$ der Menge $\{1,\ldots,k\}$ in $j$ nichtleere disjunkte Teilmengen $I_1,\ldots,I_j\subseteq\{1,\ldots,k\}$. Es gilt also $I_a\cap I_b=\emptyset$ für $a\neq b$ und $\{1,\ldots,k\}=\bigcup_{a=1}^j I_a$. Wir schreiben $\lvert I_a\rvert$ für die Anzahl der Elemente der Menge $I_a$. Sind $y=(y_1,\ldots,y_k)\in E^k$ und $I\subseteq\{1,\ldots,k\}$ eine nichtleere Teilmenge, etwa $I=\{i_1,\ldots,i_l\}$ mit $i_1<\ldots<i_l$, so notieren wir
\[y_I\coloneqq (y_{i_1},\ldots,y_{i_l})\in E^l.\]

\newpage

\begin{satz}[\textbf{Fa\`a-di-Bruno-Formel}]\label{yy}
Es seien $E,F$ und $H$ lokalkonvexe Räume, $U\subseteq E$ und $V\subseteq F$ lokalkonvexe Teilmengen mit dichtem Inneren, $k\in\N$ und $g\colon U\to V\subseteq F$ sowie $f\colon V\to H$ beide $C^k$-Funktionen. Dann gilt
\[d^{(k)}(f\circ g)(x,y)=\sum_{j=1}^k\sum_{P\in P_{k,j}}d^{(j)}f\big(g(x),d^{(\lvert I_1\rvert)}g(x,y_{I_1}),\ldots,d^{(\lvert I_j\rvert)}g(x,y_{I_j})\big)\]
für alle $x\in U$ und $y=(y_1,\ldots,y_k)\in E^k$, wobei wir $P=\{I_1,\ldots,I_j\}$ schreiben und die rechte Seite wohldefiniert ist, also unabhängig von der Reihenfolge der $I_1,\ldots,I_j$.
\end{satz}

\begin{proof}
Analog wie in \cite[Theorem 1.3.18]{Gl}; dort wird der Satz für offene Teilmengen $U\subseteq E$ und $V\subseteq F$ bewiesen.
\end{proof}

Wir formulieren noch die Fa\`a-di-Bruno-Formel für partielle Ableitungen. Dafür verwenden wir sogenannte Fa\`a-di-Bruno-Polynome.

\begin{definition}\label{zz}
Es seien $s,t\in\N$ und $\alpha=(\alpha_1,\ldots,\alpha_s)\neq 0$ in $(\N_0)^s$. Dazu seien $k\coloneqq\al$ und $j_1\leq\ldots \leq j_{k}$ diejenigen Zahlen aus $\{1,\ldots,s\}$, für die \[\alpha_r=\lvert\{1\leq l\leq k\colon j_l=r\}\rvert\] für alle $r\in\{1,\ldots,s\}$ gilt. Für jede nichtleere Teilmenge $I=\{i_1,\ldots,i_m\}\subseteq\{1,\ldots,k\}$ seien \[(I)_r\coloneqq \lvert\{1\leq l\leq m\colon j_{i_l}=r\}\rvert\] für $r\in\{1,\ldots,s\}$ und \[(I)\coloneqq ((I)_1,\ldots,(I)_s)\in(\N_0)^s.\]
Für alle $\beta=(\beta_1,\ldots,\beta_t)\neq 0$ in $(\N_0)^t$ mit $j\coloneqq\bet\leq\al$ definieren wir die \ee{Fa\`a-di-Bruno-Polynome}
\begin{align*}p_{\alpha,\beta}\colon \R^{\{\gamma\in(\N_0)^s\colon \vert\gamma\vert\leq\al\}\times t}\to \R\end{align*}
durch
\[p_{\alpha,\beta}(x)\coloneqq\sum_{\substack{\{I_1,\ldots,I_j\}\\ \in P_{k,j}}}\sum_{\substack{1\leq i_1,\ldots,i_j\leq t\colon\\ \lvert\{l\colon i_l=r\}\rvert=\beta_r}}x_{(I_1),i_1}\cdots x_{(I_j),i_j}\]
für $x=(x_{\gamma,i})_{\gamma,i}
\in \R^{\{\gamma\in(\N_0)^s\colon \vert\gamma\vert\leq\al\}\times t}$, wobei die zweite Summe auf der rechten Seite unabhängig von der Reihenfolge der $I_1,\ldots, I_j$ ist. Wir setzen
\[p_{\alpha,0}\coloneqq 0\quad\text{und}\quad p_{0,0}\coloneqq 1.\]
\end{definition}

\newpage
\begin{satz}[\textbf{Fa\`a-di-Bruno-Formel für partielle Ableitungen}]\label{d}
Es seien $U\subseteq\R^s$ wie auch $V\subseteq\R^t$ lokalkonvexe Teilmengen mit dichtem Inneren, $\alpha\in(\N_0)^s$ und $g\colon U\to V\subseteq\R^t$ sowie $f\colon V\to E$ beide $C^{\al}$-Funktionen. Dann gilt
\[\partial^{\alpha}(f\circ g)(x)=\sum_{\bet\leq\al}p_{\alpha,\beta}\left((\partial^{\gamma}g(x))_{\vert\gamma\vert\leq\al}\right)\partial^{\beta} f(g(x))\]
für alle $x\in U$, wobei hier $\beta\in(\N_0)^t$ und $\gamma\in(\N_0)^s$ seien.\ftnote{Wir identifizieren $\R^{\{\gamma\in(\N_0)^s\colon \vert\gamma\vert\leq\al\}\times t}$ und $(\R^t)^{\{\gamma\in(\N_0)^s\colon \vert\gamma\vert\leq\al\}}$ mittels des Isomorphismus $(x_{\gamma,i})_{\gamma,i}\mapsto ((x_{\gamma,1},\ldots,x_{\gamma,t}))_{\gamma}$.}
\end{satz}

\begin{proof}
Für $\alpha=0$ ist ist die Aussage klar. Sei also $\alpha\neq 0$. Wir übernehmen die Notation aus Definition \ref{zz} und setzen $e\coloneqq(e_{j_1},\ldots,e_{j_k})\in(\R^s)^k$. Mit Satz \ref{yy} erhalten wir
\begin{align*}\partial^{\alpha}(f\circ g)(x)
&=d^{(k)}(f\circ g)(x,e)\\
&=\sum_{j=1}^k
\sum_{P\in P_{k,j}}d^{(j)}f\big(g(x),d^{(\lvert I_1\rvert)}g(x,e_{I_1}),\ldots,d^{(\lvert I_j\rvert)}g(x,e_{I_j})\big)\\
&=\sum_{j=1}^k
\sum_{P\in P_{k,j}}d^{(j)}f\big(g(x),\partial^{(I_1)}g(x),\ldots,\partial^{(I_j)}g(x)\big)\\
&=\sum_{j=1}^k
\sum_{P\in P_{k,j}}\sum_{1\leq i_1,\ldots,i_j\leq t}\partial^{(I_1)}g_{i_1}(x)\cdots\partial^{(I_j)}g_{i_j}(x)(\partial_{i_j}\cdots\partial_{i_1} f)(g(x))\\
&=\sum_{j=1}^k\sum_{\bet=j}\bigg(
\sum_{P\in P_{k,j}}\sum_{\substack{1\leq i_1,\ldots,i_j\leq t\colon\\ \lvert\{l\colon i_l=r\}\rvert=\beta_r}}\partial^{(I_1)}g_{i_1}(x)\cdots\partial^{(I_j)}g_{i_j}(x)\bigg)\partial^{\beta}f(g(x))\\
&=\sum_{\bet\leq\al}p_{\alpha,\beta}\big((\partial^{\gamma}g(x))_{\vert\gamma\vert\leq\al}\big)\partial^{\beta}f(g(x))
\end{align*}
für alle $x\in U$, wobei wir $P=\{I_1,\ldots,I_j\}$ schreiben.
\end{proof}

\subsection*{Die kompakt-offene $C^k$-Topologie}
\addcontentsline{toc}{subsection}{\protect\numberline{}Die kompakt-offene $C^k$-Topologie}

\begin{definition}\label{r}
Sind $k\in\N_0\cup\{\infty\}$ und $U\subseteq\R^n$ eine lokalkonvexe Teilmenge mit dichtem Inneren, so versehen wir $C^k(U,E)$ mit der \ee{kompakt-offenen $C^k$-Topologie}; das ist die Hausdorffsche lokalkonvexe Topologie, welche durch die Halbnormen $\lVert \cdot\rVert_{C^l,q,K}\colon C^k(U,E)\to [0,\infty[$,
\[\lVert f\rVert_{C^l,q,K}\coloneqq \max_{\al\leq l}\sup_{x\in K} q(\partial^{\alpha}f(x))\]
für $l\in\N_0$ mit $l\leq k$, stetigen Halbnormen $q$ auf $E$ und kompakten Teilmengen $K\subseteq U$ erzeugt wird.
\end{definition}

\begin{lemma}\label{pp}
Es seien $k\in\N_0\cup\{\infty\}$ und $g\colon U\to V$ eine $C^k$-Funktion zwischen lokalkonvexen Teilmengen $U\subseteq\R^s$ und $V\subseteq\R^t$ mit dichtem Inneren. Weiter seien $l\in\N_0$ mit $l\leq k$, $q$ eine stetige Halbnorm auf $E$ und $K\subseteq U$ kompakt. Dann gibt es ein $C>0$ derart, dass 
\begin{align*}\Vert f\circ g\Vert_{C^l,q,K}\leq C\Vert f\Vert_{C^l,q,g(K)}\end{align*}
für alle $f\in C^k(V,E)$. Somit ist die lineare Abbildung
\[g^{*}\colon C^k(V,E)\to C^k(U,E),\quad f\mapsto f\circ g\]
stetig. Insbesondere ist im Falle $s=t$ und $U\subseteq V$ die Einschränkung
\[C^k(V,E)\to C^k(U,E),\quad f\mapsto f\vert_U\]
stetig.
\end{lemma}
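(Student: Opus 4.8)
The plan is to reduce everything to the Fa\`a-di-Bruno formula for partial derivatives (Satz \ref{d}) together with a compactness argument. First I would note that $f\circ g$ indeed lies in $C^k(U,E)$, since compositions of $C^k$-functions are again $C^k$-functions, and that $g(K)\subseteq V$ is compact as the continuous image of the compactum $K$; hence both seminorms appearing in the asserted inequality are well-defined, and $\lVert\cdot\rVert_{C^l,q,g(K)}$ is in fact one of the generating seminorms of $C^k(V,E)$ from Definition \ref{r}.

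Next I would fix a multi-index $\alpha\in(\N_0)^s$ with $\al\leq l$ and a point $x\in K$. Since $g$ and $f$ are in particular $C^{\al}$-functions, Satz \ref{d} applies and yields
\[\partial^{\alpha}(f\circ g)(x)=\sum_{\bet\leq\al}p_{\alpha,\beta}\bigl((\partial^{\gamma}g(x))_{\vert\gamma\vert\leq\al}\bigr)\,\partial^{\beta}f(g(x)).\]
Applying the seminorm $q$ and using its subadditivity and absolute homogeneity, I obtain
\[q(\partial^{\alpha}(f\circ g)(x))\leq\sum_{\bet\leq\al}\bigl\lvert p_{\alpha,\beta}\bigl((\partial^{\gamma}g(x))_{\vert\gamma\vert\leq\al}\bigr)\bigr\rvert\,q(\partial^{\beta}f(g(x))).\]

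The key observation is that the coefficients are bounded independently of $f$ and of $x\in K$. Indeed, because $g$ is a $C^k$-function and $\al\leq l\leq k$, every partial derivative $\partial^{\gamma}g$ with $\vert\gamma\vert\leq\al$ is continuous on $U$, and the Fa\`a-di-Bruno polynomial $p_{\alpha,\beta}$ is continuous as a polynomial; hence $x\mapsto p_{\alpha,\beta}((\partial^{\gamma}g(x))_{\vert\gamma\vert\leq\al})$ is continuous on the compactum $K$, so that $C_{\alpha,\beta}\coloneqq\sup_{x\in K}\lvert p_{\alpha,\beta}((\partial^{\gamma}g(x))_{\vert\gamma\vert\leq\al})\rvert<\infty$. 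Moreover, since $g(x)\in g(K)$ and $\bet\leq\al\leq l$, we have $q(\partial^{\beta}f(g(x)))\leq\lVert f\rVert_{C^l,q,g(K)}$. Setting $C\coloneqq\max_{\al\leq l}\sum_{\bet\leq\al}C_{\alpha,\beta}$ and then taking the supremum over $x\in K$ and the maximum over $\al\leq l$ gives $\lVert f\circ g\rVert_{C^l,q,K}\leq C\,\lVert f\rVert_{C^l,q,g(K)}$, with $C$ depending only on $g$, $K$, and $l$, but not on $f$.

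Finally, the continuity of $g^{*}$ would follow from the standard criterion for linear maps between locally convex spaces: the seminorms $\lVert\cdot\rVert_{C^l,q,K}$ generate the topology of $C^k(U,E)$, and the estimate $\lVert g^{*}(f)\rVert_{C^l,q,K}\leq C\,\lVert f\rVert_{C^l,q,g(K)}$ shows that each pulled-back generating seminorm is dominated by a continuous seminorm on $C^k(V,E)$. The special case $s=t$, $U\subseteq V$ is obtained by taking $g$ to be the inclusion $U\hookrightarrow V$, which is the restriction of the identity and hence $C^k$, so that $g^{*}(f)=f\vert_U$. I do not expect a genuine obstacle here: once Satz \ref{d} is invoked, the argument is essentially the remark that the polynomial coefficients are continuous and therefore bounded on $K$. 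The only care needed is the multi-index bookkeeping and the verification that $g(K)$ is compact, so that the right-hand seminorm is genuinely one of the generating seminorms of $C^k(V,E)$.
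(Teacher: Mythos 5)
Dein Beweis ist korrekt und folgt im Wesentlichen genau dem Weg der Arbeit: Anwendung der Fa\`a-di-Bruno-Formel für partielle Ableitungen (Satz \ref{d}), Beschränkung der Fa\`a-di-Bruno-Polynomkoeffizienten auf dem Kompaktum $K$ mittels Stetigkeit der $\partial^{\gamma}g$, und Abschätzung von $q(\partial^{\beta}f(g(x)))$ durch $\Vert f\Vert_{C^l,q,g(K)}$. Der einzige unwesentliche Unterschied ist die Wahl der Konstanten ($C=\max_{\al\leq l}\sum_{\bet\leq\al}C_{\alpha,\beta}$ statt $C=(l+1)^nM$ mit dem gemeinsamen Supremum $M$); beide Argumente sind identisch in der Substanz.
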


\begin{proof}
Für alle $\alpha\in(\N_0)^s$ mit $\al\leq l$ und $\beta\in(\N_0)^t$ mit $\bet\leq\al$ seien $p_{\alpha,\beta}$ die Fa\`a-di-Bruno-Polynome aus Definition \ref{zz}. Wir setzen
\[M\coloneqq\max_{\bet\leq\al\leq l}\sup_{x\in K}\left\vert p_{\alpha,\beta}\left((\partial^{\gamma}g(x))_{\vert\gamma\vert\leq\al}\right)\right\vert\]
und $C\coloneqq (l+1)^nM$. Ein $f\in C^k(V,E)$ gegeben, gilt für alle $\al\leq l$ und $x\in K$ mit der Fa\`a-di-Bruno-Formel
\begin{align*} q((\p(f\circ g))(x))
\leq\sum_{\bet\leq\al}\underbrace{\left\vert p_{\alpha,\beta}
\left((\partial^{\gamma}g(x))_{\vert\gamma\vert\leq\al}\right)\right\vert}_{\leq M} \underbrace{q(\partial^{\beta}f(g(x)))}_{\leq \Vert f\Vert_{C^l,q,g(K)}}
\leq C\Vert f\Vert_{C^l,q,g(K)}.
\end{align*}
Bei Übergang zum Supremum erhalten wir die gewünschte Abschätzung.
\end{proof}

\section{Vektorwertige Whitneysche $k$-Jets}
Der Begriff des Whitneyschen $k$-Jets verallgemeinert den einer $C^k$-Funktion und wird durch den Satz von Taylor motiviert.

\subsection*{Der Satz von Taylor}
\addcontentsline{toc}{subsection}{\protect\numberline{}Der Satz von Taylor}

Jede $C^k$-Funktion auf $\R^n$ lässt sich nahe einem vorgegebenen Punkt durch ein Polynom approximieren.

\begin{satz}[\textbf{Satz von Taylor}]\label{dd}
Es seien $k\in\N_0\cup\{\infty\}$ und $f\colon U\to E$ eine $C^k$-Funktion auf einer lokalkonvexen Teilmenge $U\subseteq\R^n$ mit dichtem Inneren. Für gegebene $l\in\N_0$ mit $l\leq k$ und $x,y\in U$ schreiben wir
\[f(x)=\sum_{\al\leq l}\frac{(x-y)^{\alpha}}{\alpha!}\p f(y)+R^l_{y}f(x).\]
\begin{itemize}
\item[(a)] Sind $l>0$ und $x,y\in U$ derart, dass $y+t(x-y)\in U$ für alle $t\in[0,1]$, so gilt
\[R^l_{y} f(x)=l\int_0^1\bigg(\sum_{\vert\alpha\vert=l}\frac{(x-y)^{\alpha}}{\alpha!}(1-t)^{l-1}(\partial^{\alpha}f(y+t(x-y))-\partial^{\alpha}f(y))\bigg) dt\]
(vgl. \cite[Theorem 1.6.14]{Gl}).
\item[(b)] Für alle $l\in\N_0$ mit $l\leq k$, $\al\leq l$, stetigen Halbnormen $q$ auf $E$, kompakten Teilmengen $K\subseteq U$ und $\eps>0$ gibt es ein $\delta>0$ derart, dass 
\[\frac{q(R^{l-\al}_{y}\p f(x))}{\lVert x-y\rVert^{l-\vert\alpha\vert}}\leq\eps\]
für alle $x,y\in K$ mit $0<\lVert x-y\rVert<\delta$.
\end{itemize}
\end{satz}

Für den Beweis benötigen wir den folgenden Spezialfall von \cite[Proposition 1.6.1 (b)]{Gl}:

\begin{lemma}\label{e}
Es seien $l\in\N$ und $\gamma\colon [0,1]\to E$ eine $C^l$-Kurve. Dann gilt
\[\gamma(1)=\sum_{j=0}^l\frac{1}{j!}\gamma^{(j)}(0)+R^l\gamma\]
mit dem Restglied
\begin{align}\label{gl.c}R^l\gamma=\frac{1}{(l-1)!}\int_{0}^{1}(1-t)^{l-1}(\gamma^{(l)}(t)-\gamma^{(l)}(0))\, dt.\end{align}
\end{lemma}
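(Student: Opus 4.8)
The plan is to first establish the classical Taylor formula with integral remainder in the form
\[\gamma(1)=\sum_{j=0}^{l-1}\frac{1}{j!}\gamma^{(j)}(0)+\frac{1}{(l-1)!}\int_0^1(1-t)^{l-1}\gamma^{(l)}(t)\, dt\]
by induction on $l$, and then to convert this into the stated form. For the base case $l=1$ I would apply the Mittelwertsatz to the $C^1$-curve $\gamma$ on $[0,1]$ (with $x=0$, $y=1$, noting $d\gamma(t,1)=\gamma'(t)$), which yields the fundamental theorem of calculus $\gamma(1)-\gamma(0)=\int_0^1\gamma'(t)\, dt$ for weak Integrale; this is exactly the asserted formula for $l=1$.

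For the induction step I would assume the classical formula at level $l-1$ and integrate the remainder $\frac{1}{(l-2)!}\int_0^1(1-t)^{l-2}\gamma^{(l-1)}(t)\, dt$ by parts. Concretely I would introduce the scalar function $a(t)\coloneqq -\frac{(1-t)^{l-1}}{(l-1)!}$, which satisfies $a'(t)=\frac{(1-t)^{l-2}}{(l-2)!}$, so that the integrand equals $a'(t)\gamma^{(l-1)}(t)$. Integration by parts then gives
\[\frac{1}{(l-2)!}\int_0^1(1-t)^{l-2}\gamma^{(l-1)}(t)\, dt=\big[a(t)\gamma^{(l-1)}(t)\big]_0^1-\int_0^1 a(t)\gamma^{(l)}(t)\, dt.\]
Since $a(1)=0$ (as $l\geq 2$) and $a(0)=-\frac{1}{(l-1)!}$, the boundary term contributes $\frac{1}{(l-1)!}\gamma^{(l-1)}(0)$, while the remaining integral becomes $\frac{1}{(l-1)!}\int_0^1(1-t)^{l-1}\gamma^{(l)}(t)\, dt$. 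Substituting this back into the level-$(l-1)$ formula absorbs $\frac{1}{(l-1)!}\gamma^{(l-1)}(0)$ into the sum and raises the remainder to order $l$, completing the induction.

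Finally, to reach the form stated in the lemma, I would use $\int_0^1(1-t)^{l-1}\, dt=\frac{1}{l}$ to compute
\[\frac{1}{(l-1)!}\int_0^1(1-t)^{l-1}\big(\gamma^{(l)}(t)-\gamma^{(l)}(0)\big)\, dt=\frac{1}{(l-1)!}\int_0^1(1-t)^{l-1}\gamma^{(l)}(t)\, dt-\frac{\gamma^{(l)}(0)}{l!}.\]
Thus subtracting $\gamma^{(l)}(0)$ inside the integrand costs exactly $\frac{\gamma^{(l)}(0)}{l!}$, which is precisely the $j=l$ summand that the stated formula adds to the sum; the two modifications cancel and the classical form rearranges into $\gamma(1)=\sum_{j=0}^{l}\frac{1}{j!}\gamma^{(j)}(0)+R^l\gamma$ with $R^l\gamma$ as in \eqref{gl.c}.

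I expect the main obstacle to be the justification of integration by parts for weak Integrale in the locally convex setting, since this is the only nonelementary ingredient. It rests on two facts: the scalar–vector Leibniz rule $(a\gamma^{(l-1)})'=a'\gamma^{(l-1)}+a\gamma^{(l)}$ (a special case of the Leibnizformel with $\phi\colon\R\times E\to E$ the scalar multiplication) and the fundamental theorem of calculus $\int_0^1(a\gamma^{(l-1)})'(t)\, dt=(a\gamma^{(l-1)})(1)-(a\gamma^{(l-1)})(0)$ (again the Mittelwertsatz); combining them yields the integration-by-parts identity used above. The existence of every weak Integral appearing causes no difficulty, since each integrand is a continuous curve on the compact interval $[0,1]$: the factor $(1-t)^{l-1}$ is a polynomial and $\gamma^{(l)}$ is continuous by the hypothesis that $\gamma$ is a $C^l$-Kurve.
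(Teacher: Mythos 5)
The paper itself contains no proof of this lemma: it is quoted verbatim as a special case of \cite[Proposition 1.6.1 (b)]{Gl}, so your self-contained derivation is necessarily a different route. The route you chose is the natural one, and its algebraic skeleton is correct: the base case $l=1$ is indeed the Mittelwertsatz applied to the $C^1$-Kurve $\gamma$ (giving $\gamma(1)-\gamma(0)=\int_0^1\gamma'(t)\,dt$), the induction step via integration by parts with $a(t)=-\frac{(1-t)^{l-1}}{(l-1)!}$ is correct (boundary term $\frac{1}{(l-1)!}\gamma^{(l-1)}(0)$, since $a(1)=0$ for $l\geq 2$), the scalar--vector product rule is a legitimate special case of the Leibnizformel with $\phi\colon\R\times E\to E$ the scalar multiplication, and the final exchange of $\gamma^{(l)}(0)$ against the $j=l$ summand using $\int_0^1(1-t)^{l-1}\,dt=\frac{1}{l}$ is exactly right.

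There is, however, one genuine flaw, and it sits precisely at the point you yourself flagged as the main obstacle. Your justification that the existence of the weak integrals \emph{causes no difficulty} because each integrand is continuous on the compact interval $[0,1]$ is false in a general locally convex space $E$: without (sequential) completeness, a continuous curve need not possess a weak integral in $E$ -- in general it only exists in the completion $\widetilde{E}$. The paper is careful about exactly this subtlety in the proof of Satz \ref{dd} (b), where the integrals $I^{\alpha}(x,y)$ are only asserted to exist \emph{in der Vervollständigung} $\widetilde{E}$, with reference to \cite[Proposition 1.1.15]{Gl}. Fortunately the repair uses only tools you already invoke, provided you argue in the right order: the existence of $\int_0^1(a\gamma^{(l-1)})'(t)\,dt$ in $E$ is part of the \emph{conclusion} of the Mittelwertsatz; the existence of $\int_0^1 a'(t)\gamma^{(l-1)}(t)\,dt$ in $E$ is part of the induction hypothesis, since the level-$(l-1)$ formula asserts that this integral exists and equals $\gamma(1)-\sum_{j=0}^{l-2}\frac{1}{j!}\gamma^{(j)}(0)$; and weak integrals subtract: if $\int_0^1(f+g)(t)\,dt$ and $\int_0^1 f(t)\,dt$ exist in $E$, then applying an arbitrary continuous linear functional shows that $\int_0^1 g(t)\,dt$ exists and equals their difference. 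Hence $\int_0^1 a(t)\gamma^{(l)}(t)\,dt$ exists in $E$ and the integration-by-parts identity is legitimate; likewise $\int_0^1(1-t)^{l-1}\gamma^{(l)}(0)\,dt=\frac{1}{l}\gamma^{(l)}(0)$ exists trivially, so the final splitting is also valid. With the existence argument rewritten in this order, your proof is complete.
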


\begin{proof}[Beweis von Satz \ref{dd}]
(a) Für die $C^l$-Kurve \[\gamma\colon[0,1]\to E,\quad t\mapsto f(y+t(x-y))\] gilt
\[f(x)=\gamma(1)=\sum_{j=0}^l\frac{1}{j!}\gamma^{(j)}(0)+R^l \gamma\]
mit $R^l\gamma$ wie in \eqref{gl.c}. Wir schreiben $x=(x_1,\ldots,x_n)$ und $y=(y_1,\ldots,y_n)$.
Die Kettenregel liefert
\[\gamma'(t)=\sum_{i=1}^n(x_i-y_i)\partial_i f(y+t(x-y))\]
für alle $t\in[0,1]$, und für $j\in\{1,\ldots,l\}$ folgt per Induktion
\begin{align*}\gamma^{(j)}(t)
&=\sum_{i_1,\ldots,i_j=1}^n(x_{i_1}-y_{i_1})\cdots(x_{i_j}-y_{i_j})(\partial_{i_j}\cdots\partial_{i_1} f)(y+t(x-y))\\
&=\sum_{\vert\alpha\vert=j}\binom{j}{\alpha_1}\binom{j-\alpha_1}{\alpha_2}\cdots\binom{j-\alpha_1-\ldots-\alpha_{n-1}}{\alpha_n}(x-y)^{\alpha}\partial^{\alpha}f(y+t(x-y)),\end{align*}
wobei $\binom{j}{\alpha_1}\binom{j-\alpha_1}{\alpha_2}\cdots\binom{j-\alpha_1-\ldots-\alpha_{n-1}}{\alpha_n}=\frac{j!}{\alpha!}$ ist.
Somit gilt
\[f(x)=\sum_{\vert\alpha\vert\leq l}\frac{(x-y)^{\alpha}}{\alpha!}\partial^{\alpha}f(y)+R^l \gamma\] mit
\begin{align*}R^l\gamma
&=\frac{1}{(l-1)!}\int_0^1(1-t)^{l-1}\bigg(\sum_{\vert\alpha\vert=l}\frac{l!(x-y)^{\alpha}}{\alpha!}(\partial^{\alpha}f(y+t(x-y))-\partial^{\alpha}f(y))\bigg)dt\\
&=l\int_0^1\bigg(\sum_{\vert\alpha\vert=l}\frac{(x-y)^{\alpha}}{\alpha!}(1-t)^{l-1}(\partial^{\alpha}f(y+t(x-y))-\partial^{\alpha}f(y))\bigg)dt.\end{align*}

(b) Sei $l\in\N_0$ mit $l\leq k$. Angenommen, die Aussage gilt nicht für $\alpha=0$. Dann gibt es eine stetige Halbnorm $q$ auf $E$, eine kompakte Teilmenge $K\subseteq U$ und ein $\eps>0$, so dass wir für alle $m\in\N$ gewisse $x_m,y_m\in K$ derart wählen können, dass $0<\Vert x_m-y_m\Vert<\frac{1}{m}$ und \begin{align}\label{gl.ee}\frac{q(R_{y_m}^l f(x_m))}{\Vert x_m-y_m\Vert^l}>\eps. \end{align}
Wegen der Kompaktheit von $K$ hat $(x_m)_{m\in\N}$ eine konvergente Teilfolge $(x_{m_j})_{j\in\N}$. Nach Ersetzen von $(x_m)_{m\in\N}$ und $(y_m)_{m\in\N}$ durch $(x_{m_j})_{j\in\N}$ bzw. $(y_{m_j})_{j\in\N}$ können wir annehmen, dass $(x_m)_{m\in\N}$ gegen einen Grenzwert $z\in K$ konvergiert. Dann ist $z$ auch der Grenzwert von $(y_m)_{m\in\N}$. Im Falle $l=0$ gilt
\[q(R_y^0f(x))=q(f(x)-f(y))\to q(f(z)-f(z))=0\ \text{für }(x,y)\to(z,z),\]
ein Widerspruch zu \eqref{gl.ee}. Nehmen wir nun an, dass $l>0$ ist. Sei $V$ eine konvexe $z$-Umgebung in $U$. Für alle $\al\leq l$ und $x,y\in V$ existiert das schwache Integral
\[I^{\alpha}(x,y)\coloneqq l\int_0^1(1-t)^{l-1}(\partial^{\alpha}f(y+t(x-y))-\partial^{\alpha}f(y))\,dt\]
in der Vervollständigung $\widetilde{E}$ von $E$
(vgl. \cite[Proposition 1.1.15]{Gl}). Wegen der Stetigkeit parameterabhängiger Integrale ist die Abbildung
\[I^{\alpha}\colon V\times V\to \widetilde{E},\quad (x,y)\mapsto I^{\alpha}(x,y)\]
stetig, also gilt $I^{\alpha}(x,y)\to I^{\alpha}(z,z)=0$ für $(x,y)\to (z,z)$. Nach (a) gilt
\[R_y^lf(x)=\sum_{\al=l}  \frac{(x-y)^{\alpha}}{\alpha!} I^{\alpha}(x,y)\quad \text{für alle }(x,y)\in V\times V.\] Bezeichnen wir die Fortsetzung von $q$ zu einer stetigen Halbnorm auf $\widetilde{E}$ ebenfalls mit $q$, so folgt
\[\frac{q(R_y^lf(x))}{\Vert x-y\Vert^l}
\leq\sum_{\al=l}\underbrace{\frac{\vert (x-y)^{\alpha}\vert}{\Vert x-y\Vert^l}}_{\leq 1} q(I^{\alpha}(x,y))\to 0\]
für $(x,y)\to (z,z) \text{ in }V\times V \text{ mit }x\neq y$,
ein Widerspruch zu \eqref{gl.ee}. Somit gilt die Aussage für $\alpha=0$. Da aber $\p f$ eine $C^{l-\al}$-Funktion für alle $\al\leq l$ ist, folgt mit dem Vorigen die Behauptung.
\end{proof}

\begin{beispiel}\label{ee}
Sei $l\in\N_0$. Eine Funktion $p\colon\R^n\to E$ der Gestalt \[p(x)=\sum_{\al\leq l}x^{\alpha}v_{\alpha}\] mit Vektoren $v_{\alpha}\in E$ heißt \ee{Polynom vom Grad $\leq l$}. Wie für reellwertige Polynome sieht man, dass $p\colon\R^n\to E$ glatt ist mit $\p p=0$ für alle $\al>l$. Mit dem Satz von Taylor folgt \[p(x)=\sum_{\al\leq l}\frac{(x-y)^{\alpha}}{\alpha!}\p p(y)\] für alle $x,y\in\R^n$.
\end{beispiel}

\subsection*{Taylorpolynome von $k$-Jets}
\addcontentsline{toc}{subsection}{\protect\numberline{}Taylorpolynome von $k$-Jets}

Whitneys Ideen in \cite[\S\ 3 und \S\ 6]{Wh} können wir wie folgt für vektorwertige Funktionen formulieren.

Es seien $k\in\N_0\cup\{\infty\}$ und $U\subseteq\R^n$ eine Teilmenge.

\begin{definition}
Eine Familie $f=(f_{\alpha})_{\al\leq k}\in\prod_{\al\leq k}E^U=:\J^k(U,E)$ von Funktionen $f_{\alpha}\colon U\to E$ nennt man \ee{$k$-Jet} auf $U$ mit Werten in $E$. Wir schreiben \[f\vert_V\coloneqq (f_{\alpha}\vert_V)_{\al\leq k}\]
für jede Teilmenge $V\subseteq U$ und 
\[\text{pr}_l(f)\coloneqq(f_{\alpha})_{\al\leq l}\]
für jedes $l\leq k$. Für alle $\al\leq k$ setzen wir
\[\p f\coloneqq (f_{\alpha+\beta})_{\bet\leq k-\al}.\]
Sind ein $l\in\N_0$ mit $l\leq k$ und ein $y\in U$ gegeben, so nennt man das Polynom
$T_{y}^l f\colon\R^n\to E,$ \[T_{y}^l f(x)\coloneqq\sum_{\vert\alpha\vert\leq l}\frac{(x-y)^{\alpha}}{\alpha!}f_{\alpha}(y)\]
das \ee{Taylorpolynom $l$-ter Ordnung} von $f$ in $y$ und die Funktion $R_{y}^l f\colon U\to E,$ 
\[R^l_{y}f(x)\coloneqq f_0(x)-T_{y}^lf (x)\]
das zugehörige \ee{Taylorsche Restglied}. Es gilt also
\[f_{\alpha}(x)=T_{y}^{l-\al}\p f(x)+R_{y}^{l-\al}\p f(x)\]
für alle $\al\leq l$ und $x\in U$.
\end{definition}

\begin{lemma}\label{c}
\begin{itemize}
\item[(a)] Für alle $\al\leq l$ gilt
\[\p T_{y}^l f=T_{y}^{l-\al}\p f.\]
\item[(b)] Sind $s,t\in\R$ und $g\in\J^k(U,E)$ ein weiterer $k$-Jet, dann gilt
\[T_{y}^l(sf+tg)=sT_{y}^lf+tT_{y}^lg.\]
\item[(c)] Für alle $y,z\in U$ und $x\in\R^n$ gilt
\[T_{y}^lf(x)=T_{z}^lf(x)+\sum_{\al\leq l}\frac{(x-y)^{\alpha}}{\alpha!}R_{z}^{l-\al}\p f(y).\]
\end{itemize}
\end{lemma}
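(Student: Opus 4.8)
The plan is to establish all three parts by direct computation with multiindices, carefully keeping apart the two roles that $\partial^{\alpha}$ plays here: the genuine partial derivative in $x$ of the \emph{polynomial} $T_y^l f$, and the shift operation $\p f=(f_{\alpha+\beta})_{\bet\leq k-\al}$ on the jet $f$, whose defining relation gives $(\p f)_{\gamma}=f_{\alpha+\gamma}$ and in particular $(\p f)_0=f_{\alpha}$. Part (b) is then immediate: since $(sf+tg)_{\alpha}=sf_{\alpha}+tg_{\alpha}$ and $T_y^l$ depends linearly on the coefficient values $f_{\alpha}(y)$, inserting this into the defining sum yields the claim at once.

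For (a) I would differentiate $T_y^l f(x)=\sum_{\bet\leq l}\frac{(x-y)^{\beta}}{\beta!}f_{\beta}(y)$ termwise in $x$. Using $\p(x-y)^{\beta}=\frac{\beta!}{(\beta-\alpha)!}(x-y)^{\beta-\alpha}$ whenever $\alpha\leq\beta$ (and $0$ otherwise), the factorials cancel, and after the substitution $\gamma=\beta-\alpha$ the sum becomes $\sum_{|\gamma|\leq l-\al}\frac{(x-y)^{\gamma}}{\gamma!}f_{\alpha+\gamma}(y)$. Since $(\p f)_{\gamma}=f_{\alpha+\gamma}$, this is exactly $T_y^{l-\al}\p f(x)$, as desired.

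Part (c) is the heart of the lemma, and I would reduce it to (a), to $(\p f)_0=f_{\alpha}$, and to the self-reproducing property of polynomials from Example \ref{ee}. First I rewrite the remainder on the right-hand side:
\[R_z^{l-\al}\p f(y)=(\p f)_0(y)-T_z^{l-\al}\p f(y)=f_{\alpha}(y)-\p T_z^l f(y),\]
where the last step uses (a). Substituting this, the right-hand side of (c) splits as
\[T_z^l f(x)+\sum_{\al\leq l}\frac{(x-y)^{\alpha}}{\alpha!}f_{\alpha}(y)-\sum_{\al\leq l}\frac{(x-y)^{\alpha}}{\alpha!}\p T_z^l f(y),\]
in which the middle sum is precisely $T_y^l f(x)$ by definition. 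It therefore remains only to check that the first and third terms cancel, i.e.\ that $T_z^l f(x)=\sum_{\al\leq l}\frac{(x-y)^{\alpha}}{\alpha!}\p T_z^l f(y)$. But $T_z^l f$ is a polynomial of degree $\leq l$, so this is exactly the Taylor identity for polynomials established in Example \ref{ee}, applied about the base point $y$.

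The main obstacle is organizational rather than technical: the whole argument hinges on the bridge provided by (a) together with $(\p f)_0=f_{\alpha}$, which converts the jet-theoretic remainder into an honest partial derivative of the polynomial $T_z^l f$ and thereby makes the cancellation in (c) transparent. The only genuine point to verify with care is the index substitution $\gamma=\beta-\alpha$ in (a), confirming that the constraint $\alpha\leq\beta$ together with $\bet\leq l$ translates exactly into $\gamma\geq 0$ with $|\gamma|\leq l-\al$, so that the two index sets are in bijection.
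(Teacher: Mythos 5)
Your proof is correct and follows essentially the same route as the paper: part (a) by termwise differentiation with the index shift $\gamma=\beta-\alpha$, part (b) by linearity, and part (c) from the decomposition $f_{\alpha}(y)=T_{z}^{l-\alpha}\partial^{\alpha}f(y)+R_{z}^{l-\alpha}\partial^{\alpha}f(y)$ combined with (a) and the Taylor identity for polynomials from Beispiel \ref{ee}. The only difference is cosmetic: the paper transforms the left-hand side of (c) into the right-hand side, while you run the same cancellation in the opposite direction.
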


\begin{proof}
\begin{itemize}
\item[(a)] Aus $\frac{\partial^{\alpha}}{\partial x^{\alpha}}(x-y)^{\beta}=\frac{\beta!}{(\beta-\alpha)!}(x-y)^{\beta-\alpha}$ für $\bet\geq\al$ folgt
\begin{align*}\frac{\partial^{\alpha}}{\partial x^{\alpha}}T_{y}^lf(x)
&=\sum_{\bet\leq l}\frac{\frac{\partial^{\alpha}}{\partial x^{\alpha}}(x-y)^{\beta}}{\beta!}f_{\beta}(y)
=\sum_{\al\leq\bet\leq l}\frac{(x-y)^{\beta-\alpha}}{(\beta-\alpha)!}f_{\beta}(y)\\
&=\sum_{\bet\leq l-\al}\frac{(x-y)^{\beta}}{\beta!} f_{\alpha+\beta}(y)=T_{y}^{l-\al}\p f.\end{align*}
\item[(b)] Ist offensichtlich.
\item[(c)] Es gilt 
\begin{align*} T_{y}^lf(x)
&=\sum_{\vert\alpha\vert\leq l}\frac{(x-y)^{\alpha}}{\alpha!}f_{\alpha}(y)\\
&=\sum_{\vert\alpha\vert\leq l}\frac{(x-y)^{\alpha}}{\alpha!}(T_{z}^{l-\al}\p f(y)+R_{z}^{l-\al}\p f(y))\\
&=\sum_{\vert\alpha\vert\leq l}\frac{(x-y)^{\alpha}}{\alpha!}\p T_{z}^l f(y)+\sum_{\vert\alpha\vert\leq l}\frac{(x-y)^{\alpha}}{\alpha!}R_{z}^{l-\al}\p f(y)\\
&=T_{z}^l f(x)+\sum_{\vert\alpha\vert\leq l}\frac{(x-y)^{\alpha}}{\alpha!}R_{z}^{l-\al}\p f(y),\end{align*}
wobei wir für die letzte Gleichheit die Taylordarstellung von $T_{z}^l f(x)$ (vgl. Beispiel \ref{ee}) verwendet haben.
\end{itemize}
\end{proof}

\subsection*{Whitneysche $k$-Jets}
\addcontentsline{toc}{subsection}{\protect\numberline{}Whitneysche $k$-Jets}

Es seien wieder stets $k\in\N_0\cup\{\infty\}$ und $U\subseteq\R^n$ eine Teilmenge.

\begin{definition}\label{p}
Ein $k$-Jet $f=(f_{\alpha})_{\vert\alpha\vert\leq k}\in\J^k(U,E)$ heißt \ee{Whitneyscher $k$-Jet}, wenn für alle $l\in\N_0$ mit $l\leq k$, $\al\leq l$, stetigen Halbnormen $q$ auf $E$, kompakten Teilmengen $K\subseteq U$ und $\eps>0$ ein $\delta>0$ derart existiert, dass
\[\frac{q(R^{l-\al}_{y}\p f(x))}{\lVert x-y\rVert^{l-\vert\alpha\vert}}\leq\eps\]
für alle $x,y\in K$ mit $0<\lVert x-y\rVert<\delta$.

Wir schreiben $\EE^k(U,E)\subseteq \J^k(U,E)$ für den Untervektorraum aller Whitneyschen $k$-Jets auf $U$ mit Werten in $E$.\ftnote{Für $U=\emptyset$ gilt $\EE^k(\emptyset,E)=\J^k(\emptyset,E)\cong \{0\}$.}
\end{definition}

\begin{bemerkung} Sei $f\in\J^k(U,E)$.
\begin{itemize}
\item[(a)] Aus $f\in\EE^k(U,E)$ folgt $f\vert_V\in\EE^k(V,E)$ für jede Teilmenge $V\subseteq U$.
\item[(b)] Genau dann gilt $f\in\EE^k(U,E)$, wenn $f\vert_K\in\EE^k(K,E)$ für jede kompakte Teilmenge $K\subseteq U$.
\item[(c)] Genau dann gilt $f\in\EE^{k}(U,E)$, wenn
$\text{pr}_l(f)\in\EE^{l}(U,E)$ für alle $l\in\N_0$ mit $l\leq k$.
\end{itemize}
\end{bemerkung}

\begin{lemma}
Seien $k<\infty$ und $f\in\J^k(U,E)$. Genau dann gilt $f\in\EE^k(U,E)$, wenn für alle $\al\leq k$, stetigen Halbnormen $q$ auf $E$, kompakten Teilmengen $K\subseteq U$ und $\eps>0$ ein $\delta>0$ derart existiert, dass
\[\frac{q(R^{k-\al}_{y}\p f(x))}{\lVert x-y\rVert^{k-\vert\alpha\vert}}\leq\eps\]
für alle $x,y\in K$ mit $0<\lVert x-y\rVert<\delta$.
\end{lemma}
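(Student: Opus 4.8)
Der Plan ist, die beiden Implikationen getrennt zu behandeln und dabei Taylorsche Restglieder verschiedener Ordnung miteinander zu vergleichen. Die Richtung \glqq$\Rightarrow$\grqq{} ist unmittelbar: Ist $f\in\EE^k(U,E)$, so folgt die angegebene Bedingung, indem man in Definition \ref{p} $l\coloneqq k$ setzt. Für die Umkehrung \glqq$\Leftarrow$\grqq{} nehme ich die angegebene Bedingung (also die Restgliedabschätzung der Ordnung $l=k$ für alle $\al\leq k$) an und habe die volle Whitney-Bedingung aus Definition \ref{p} für jedes $l\in\N_0$ mit $l\leq k$ nachzuweisen; für $l=k$ ist nichts zu zeigen. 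Grundlage ist die aus der Definition des Restglieds und aus $\p f=(f_{\alpha+\beta})_{\bet\leq k-\al}$ unmittelbar ablesbare Darstellung
\[R^{r}_{y}\p f(x)=f_{\alpha}(x)-\sum_{\bet\leq r}\frac{(x-y)^{\beta}}{\beta!}f_{\alpha+\beta}(y)\qquad(0\leq r\leq k-\al).\]

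Zuerst würde ich nachweisen, dass unter der Voraussetzung jede Komponente $f_{\gamma}$ mit $\lvert\gamma\rvert\leq k$ stetig ist; dies ist einerseits der Fall $l=\lvert\gamma\rvert$ der Behauptung und wird andererseits als Schranke im Hauptschritt benötigt. Sei dazu $x_0\in U$ und $(x_m)_{m\in\N}$ eine gegen $x_0$ konvergierende Folge in $U$ (ohne Einschränkung mit $x_m\neq x_0$). Die Menge $K\coloneqq\{x_0\}\cup\{x_m\colon m\in\N\}$ ist kompakt und in $U$ enthalten. Wendet man die obige Darstellung mit $\alpha=\gamma$, $r=k-\lvert\gamma\rvert$ und $y=x_0$ an, so erhält man
\[f_{\gamma}(x_m)-f_{\gamma}(x_0)=R^{k-\lvert\gamma\rvert}_{x_0}\partial^{\gamma}f(x_m)+\sum_{0<\bet\leq k-\lvert\gamma\rvert}\frac{(x_m-x_0)^{\beta}}{\beta!}f_{\gamma+\beta}(x_0).\]
Für jede stetige Halbnorm $q$ strebt der Restgliedterm gegen $0$, denn die Voraussetzung liefert $q(R^{k-\lvert\gamma\rvert}_{x_0}\partial^{\gamma}f(x_m))\leq\eps\,\lVert x_m-x_0\rVert^{k-\lvert\gamma\rvert}$ für großes $m$ und jedes $\eps>0$; jeder Summand der endlichen Summe ist das Produkt des gegen $0$ strebenden Skalars $(x_m-x_0)^{\beta}$ mit dem festen Vektor $f_{\gamma+\beta}(x_0)$ und strebt somit ebenfalls gegen $0$. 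Also gilt $f_{\gamma}(x_m)\to f_{\gamma}(x_0)$, und da $U$ metrisierbar ist, folgt die Stetigkeit von $f_{\gamma}$.

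Für die Bedingung mit $l<k$ vergleiche ich die Restglieder der Ordnungen $l-\al$ und $k-\al$. Aus der obigen Darstellung ergibt sich die Schlüsselidentität
\[R^{l-\al}_{y}\p f(x)=R^{k-\al}_{y}\p f(x)+\sum_{l-\al<\bet\leq k-\al}\frac{(x-y)^{\beta}}{\beta!}f_{\alpha+\beta}(y).\]
Seien nun $l<k$, $\al\leq l$, eine stetige Halbnorm $q$, eine kompakte Menge $K\subseteq U$ und $\eps>0$ gegeben. Den ersten Summanden schätze ich mit der Voraussetzung ab: Wählt man (mit $\eps=1$) ein $\delta_1>0$ derart, dass $q(R^{k-\al}_{y}\p f(x))\leq\lVert x-y\rVert^{k-\al}$ für $x,y\in K$ mit $0<\lVert x-y\rVert<\delta_1$ gilt, so folgt
\[\frac{q(R^{k-\al}_{y}\p f(x))}{\lVert x-y\rVert^{l-\al}}\leq\lVert x-y\rVert^{k-l},\]
was wegen $k-l\geq 1$ für hinreichend kleines $\lVert x-y\rVert$ kleiner als $\eps/2$ ausfällt. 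Beim zweiten Summanden benutze ich $\lvert(x-y)^{\beta}\rvert\leq\lVert x-y\rVert^{\bet}$ und die aus dem Stetigkeitsschritt folgende Endlichkeit von $M\coloneqq\max_{\beta}\sup_{y\in K}q(f_{\alpha+\beta}(y))$, um
\[\frac{1}{\lVert x-y\rVert^{l-\al}}\sum_{l-\al<\bet\leq k-\al}\frac{\lvert(x-y)^{\beta}\rvert}{\beta!}q(f_{\alpha+\beta}(y))\leq M\sum_{l-\al<\bet\leq k-\al}\frac{1}{\beta!}\lVert x-y\rVert^{\bet-(l-\al)}\]
zu erhalten; da jeder Exponent $\bet-(l-\al)\geq 1$ ist, wird auch diese endliche Summe für hinreichend kleines $\lVert x-y\rVert$ kleiner als $\eps/2$. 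Addiert man beide Abschätzungen, so ergibt sich die Schranke $\eps$; damit ist die Bedingung aus Definition \ref{p} für alle $l\leq k$ erfüllt und folglich $f\in\EE^k(U,E)$.

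Die Hauptschwierigkeit erwarte ich im Stetigkeitsschritt: Die Voraussetzung kontrolliert lediglich das Restglied höchster Ordnung, während die Stetigkeit der Komponenten $f_{\gamma}$ niedrigerer Ordnung ($\lvert\gamma\rvert<k$) separat aus der Restglieddarstellung gewonnen werden muss. Der entscheidende Kunstgriff besteht darin, den Entwicklungspunkt $y=x_0$ festzuhalten und die Kompaktheit einer konvergenten Folge samt ihres Grenzwertes auszunutzen; die beiden anschließenden Abschätzungen im Hauptschritt sind dann Routine.
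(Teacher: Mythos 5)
Your proof is correct and follows essentially the same route as the paper: both rest on the decomposition $R^{l-\al}_{y}\p f(x)=R^{k-\al}_{y}\p f(x)+\sum_{l-\al<\bet\leq k-\al}\frac{(x-y)^{\beta}}{\beta!}f_{\alpha+\beta}(y)$, with the order-$k$ hypothesis controlling the first term and boundedness of the components on $K$ controlling the tail, so only the bookkeeping of the constants differs. The one genuine difference works in your favour: you explicitly derive the continuity of the $f_{\gamma}$ on compacta from the order-$k$ hypothesis in order to justify that $M=\max_{\beta}\sup_{y\in K}q(f_{\alpha+\beta}(y))$ is finite, a point the paper's proof uses tacitly when it fixes $M>\max_{\bet\leq k}\sup_{x\in K}q(f_{\beta}(x))$ (note that the continuity lemma for Whitney jets appears only after this lemma in the paper and, as stated, applies only to elements of $\EE^k(U,E)$).
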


\begin{proof}
Die Notwendigkeit ist klar. Angenommen, $f$ erfüllt die letztgenannte Bedingung. Seien $l\in\N_0$ mit $l<k$, $\al\leq l$, $q$ eine stetige Halbnorm auf $E$, $K\subseteq U$ eine kompakte Teilmenge, $\eps>0$ und
\[M>\max_{\bet\leq k}\sup_{x\in K}q(f_{\beta}(x)).\] Sei $\delta>0$ mit
$\delta<\min\{1,\frac{\eps}{2(k+1)^nM}\}$ derart gewählt, dass $\frac{q(R_{y}^{k-\al}\partial^{\alpha}f(x))}{\lVert x-y\rVert^{k-\vert\alpha\vert}}\leq\frac{\eps}{2}$ für alle $x,y\in K$ mit $0<\Vert x-y\Vert<\delta$ gilt. Aus
\[R_{y}^{l-\al}\p f(x)=R_{y}^{k-\al}\p f(x)+\sum_{l-\al<\bet\leq k-\al}\frac{(x-y)^{\beta}}{\beta!}f_{\alpha+\beta}(y)\]
folgt
\begin{align*}
\frac{q(R_{y}^{l-\al}\p f(x))}{\Vert x-y\Vert^{l-\al}}
&\leq \frac{q(R_{y}^{k-\al}\p f(x))}{\Vert x-y\Vert^{k-\al}}
+\underbrace{\sum_{l-\al<\bet\leq k-\al}}_{\leq(k+1)^n\text{ Summanden}}\underbrace{\frac{\vert (x-y)^{\beta}\vert}{\Vert x-y\Vert^{l-\al}}}_{\leq\delta}\underbrace{q(f_{\alpha+\beta}(y))}_{\leq M}\\
&\leq\frac{\eps}{2}+(k+1)^n\delta M\leq\eps\end{align*}
für alle $x,y\in K$ mit $0<\Vert x-y\Vert<\delta$. Somit ist $f\in\EE^k(U,E)$.
\end{proof}

\begin{lemma}\label{g}
Ist $f=(f_{\alpha})_{\al\leq k}\in\EE^k(U,E)$, dann sind die Funktionen $f_{\alpha}\colon U\to E$ für alle $\al\leq k$ stetig.
\end{lemma}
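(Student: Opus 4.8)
The plan is to read off continuity directly from the defining estimate of Definition \ref{p} by specializing it to the single index $l=\al$. For this choice the Taylor remainder collapses: since $T^0_y\p f(x)=(\p f)_0(y)=f_{\alpha}(y)$, we get
\[R^0_y\p f(x)=f_{\alpha}(x)-f_{\alpha}(y),\]
and because the denominator $\lVert x-y\rVert^{l-\al}$ equals $1$ in this case, the Whitney estimate reduces to
\[q(f_{\alpha}(x)-f_{\alpha}(y))\leq\eps\]
for all $x,y\in K$ with $0<\lVert x-y\rVert<\delta$. This is already a (uniform) continuity statement for $f_{\alpha}$ on each compact $K\subseteq U$, so the remaining work is only to turn it into genuine continuity of $f_{\alpha}\colon U\to E$.

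First I would fix $\al\leq k$ and a point $x_0\in U$ and argue sequentially: since $U\subseteq\R^n$ carries the subspace topology of a metric space, it is first countable, and hence it suffices to show that $f_{\alpha}(x_m)\to f_{\alpha}(x_0)$ for every sequence $(x_m)_{m\in\N}$ in $U$ with $x_m\to x_0$. The crucial observation is that \[K\coloneqq\{x_0\}\cup\{x_m\colon m\in\N\}\] is a compact subset of $U$ (a convergent sequence together with its limit), so the specialized estimate above is available on $K$.

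Then, given a continuous seminorm $q$ on $E$ and $\eps>0$, I would pick $\delta>0$ for this $K$, for $l=\al$, and for $q$ and $\eps$ as in Definition \ref{p}. Setting $y=x_0$ and $x=x_m$, for all sufficiently large $m$ one has $0<\lVert x_m-x_0\rVert<\delta$ (the case $x_m=x_0$ being trivial), whence $q(f_{\alpha}(x_m)-f_{\alpha}(x_0))\leq\eps$. As $q$ and $\eps$ were arbitrary, this gives $f_{\alpha}(x_m)\to f_{\alpha}(x_0)$, and sequential continuity at each $x_0$ yields continuity of $f_{\alpha}$.

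I do not expect a real obstacle here. The only subtlety is that the estimate in Definition \ref{p} is formulated uniformly over compact subsets rather than pointwise, so one cannot immediately extract continuity at a single point; the device of packaging a convergent sequence and its limit into one compact set circumvents this, after which first countability of $U$ upgrades sequential continuity to continuity.
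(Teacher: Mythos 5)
Your proof is correct and follows essentially the same route as the paper: both specialize the Whitney condition to $l=\al$, so that $R^0_y\p f(x)=f_{\alpha}(x)-f_{\alpha}(y)$ and the defining estimate directly yields $f_{\alpha}(x)\to f_{\alpha}(y)$ für $x\to y$. The paper states this limit without comment, whereas you additionally justify the passage from the uniform-on-compacts formulation to pointwise continuity via the compact set $\{x_0\}\cup\{x_m\colon m\in\N\}$ — a detail the paper leaves implicit.
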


\begin{proof}
Wir zeigen, dass $f_{\alpha}$ in jedem Punkt $y\in U$ stetig ist. Nach Definition gilt $R_y^0\p f(x)\to 0$ für $x\to y$, also folgt $f_{\alpha}(x)=f_{\alpha}(y)+R_y^0\p f(x)\to f_{\alpha}(y)$ für $x\to y$.
\end{proof}

\begin{lemma}\label{i}
Sei $f=(f_{\alpha})_{\vert\alpha\vert\leq k}\in\EE^k(U,E)$.
\begin{itemize}
\item[(a)] Sind $\al<k$, $j\in\{1,\ldots,n\}$ und $x\in U$ derart, dass es eine Nullfolge $(t_n)_{n\in\N}$ in $\R\setminus\{0\}$ mit $x+t_n e_j\in U$ für alle $n\in\N$ gibt, dann gilt
\[\lim_{t\to 0}\frac{f_{\alpha}(x+t e_j)-f_{\alpha}(x)}{t}=f_{\alpha+e_j}(x).\]
\item[(b)] Es ist $f_0\vert_{U^{\circ}}$ eine $C^k$-Funktion mit $\partial^{\alpha}(f_0\vert_{U^{\circ}})=f_{\alpha}\vert_{U^{\circ}}$ für alle $\al\leq k$.
\item[(c)] Wenn $U$ eine lokalkonvexe Teilmenge mit dichtem Inneren ist, dann ist $f_0$ eine $C^k$-Funktion mit $\partial^{\alpha} f_0=f_{\alpha}$ für alle $\al\leq k$.
\end{itemize}
\end{lemma}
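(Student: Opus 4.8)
The plan is to derive (a) directly from the Whitney estimate of Definition \ref{p} applied at order $l=\al+1$, and then to bootstrap (b) and (c) out of (a) by an induction combined with the characterisation of $C^k$-functions in Satz \ref{xx}.

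For (a) I fix $\al<k$ and $j$. Since $\p f=(f_{\alpha+\beta})_{\bet\leq k-\al}$ is itself a Whitney $(k-\al)$-jet, its first-order Taylor remainder at $x$ evaluated along $e_j$ is
\[R^1_x\p f(x+te_j)=f_{\alpha}(x+te_j)-f_{\alpha}(x)-tf_{\alpha+e_j}(x),\]
so that $\tfrac{1}{t}R^1_x\p f(x+te_j)$ is precisely the difference quotient minus the candidate limit $f_{\alpha+e_j}(x)$. It thus suffices to show that this remainder, divided by $t$, tends to $0$. Because the difference quotient is only defined on the possibly irregular set $I=\{t\neq 0\colon x+te_j\in U\}$ — whose accumulation at $0$ is exactly what the hypothesised null sequence guarantees — I would establish the limit through its sequential characterisation (condition (a) in the definition of $\lim_{t\to t_0}$). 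Given any sequence $(s_m)$ in $I$ with $s_m\to 0$, the set $K\coloneqq\{x\}\cup\{x+s_me_j\colon m\in\N\}$ is a compact subset of $U$ (a convergent sequence together with its limit). Feeding this $K$, the seminorm $q$ and $\eps$ into Definition \ref{p} at level $l=\al+1$ yields $\delta>0$ with $q(R^1_x\p f(u))/\lVert u-x\rVert\leq\eps$ for all $u\in K$ near $x$; taking $u=x+s_me_j$ and using $q(w/s_m)=q(w)/\lvert s_m\rvert$ gives $q(\tfrac{1}{s_m}R^1_x\p f(x+s_me_j))\leq\eps$ for large $m$. Hence the difference quotient along $(s_m)$ converges to $f_{\alpha+e_j}(x)$, and as this value is independent of the chosen sequence, the limit exists and equals $f_{\alpha+e_j}(x)$.

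For (b), note that all $f_{\alpha}$ are continuous by Lemma \ref{g}. On the open set $U^{\circ}$ part (a) applies at every point and in every direction $e_j$ (a whole segment lies in $U^{\circ}$), giving $\partial_j(f_{\alpha}\vert_{U^{\circ}})=f_{\alpha+e_j}\vert_{U^{\circ}}$ whenever $\al<k$. An induction on the order then shows that every iterated partial derivative of $f_0\vert_{U^{\circ}}$ up to order $k$ exists and equals the continuous function $f_{\alpha}\vert_{U^{\circ}}$; since $U^{\circ}$ is open (hence trivially locally convex with dense interior), Satz \ref{xx} applied to $U^{\circ}$ upgrades this to $f_0\vert_{U^{\circ}}\in C^k(U^{\circ},E)$ with $\p(f_0\vert_{U^{\circ}})=f_{\alpha}\vert_{U^{\circ}}$. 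For (c), where $U$ is locally convex with dense interior, I would invoke Satz \ref{xx} on $U$ itself: $f_0$ is continuous by Lemma \ref{g}, its partial derivatives on $U^{\circ}$ exist and are continuous by (b), and — crucially — each $f_{\alpha}\vert_{U^{\circ}}$ admits a continuous extension to $U$, namely $f_{\alpha}$ itself, which is the unique such extension because $U^{\circ}$ is dense and $f_{\alpha}$ is continuous. Satz \ref{xx} then yields $f_0\in C^k(U,E)$ with $\p f_0=f_{\alpha}$.

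The Taylor bookkeeping and the inductions are routine; the one genuine subtlety, and the step I would treat with most care, is the domain of the difference quotient in (a). Because $U$ is an arbitrary subset, the points $x+te_j$ need not fill any neighbourhood of $x$ on the line through $e_j$, so a naive $\eps$-$\delta$ argument has no compact $K\subseteq U$ to insert into the Whitney estimate. Routing the limit through the sequential characterisation removes this difficulty entirely, since a convergent sequence carries its own compact set; and the hypothesised null sequence is exactly what makes $0$ an accumulation point of $I$, so that the asserted limit is meaningful in the first place.
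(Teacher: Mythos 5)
Your proposal is correct and follows essentially the same route as the paper: part (a) via the Taylor decomposition of the jet $\p f$ plus the Whitney remainder estimate (the paper fixes an arbitrary $l$ with $\al<l\leq k$, where you take the minimal $l=\al+1$, so that its polynomial part merely carries extra harmless terms $\frac{t^{m-1}}{m!}f_{\alpha+me_j}(x)$), and parts (b), (c) by induction on $\al$ combined with the characterisation of $C^k$-functions in Satz \ref{xx}. Your explicit treatment of the compactness issue through the sequential characterisation of the limit, and your spelled-out use of $f_{\alpha}$ as the unique continuous extension of $\partial^{\alpha}(f_0\vert_{U^{\circ}})$ in (c), are exactly the details the paper leaves implicit in its terse \glqq Folgt aus (b)\grqq.
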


\begin{proof}
\begin{itemize}
\item[(a)] Wir wählen ein $l\in\N_0$ mit $\al<l\leq k$ und schreiben $x_t\coloneqq x+t e_j$ für $t\in\R$. Aus \[f_{\alpha}(x_t)-f_{\alpha}(x)=(T_{x}^{l-\al}\p f(x_t)-f_{\alpha}(x))+R_{x}^{l-\al}\p f(x_t)\]
für alle $t\in\R$ mit $x_t\in U$ folgt

\begin{align*}\frac{1}{t}(f_{\alpha}(x_t)-f_{\alpha}(x))
&=\frac{1}{t}\bigg(\sum_{0<\vert\beta\vert\leq l-\vert\alpha\vert}\frac{(x_t-x)^{\beta}}{\beta!}f_{\alpha+\beta}(x)+R_{x}^{l-\al}\p f(x_t)\bigg)\\
&=\sum_{0<m\leq l-\vert\alpha\vert}\frac{t^{m-1}}{m!}f_{\alpha+m e_j}(x)+\frac{1}{t}R_{x}^{l-\al}\p f(x_t)\\
&\to f_{\alpha+e_j}(x)+0=f_{\alpha+e_j}(x)\ \text{für }t\to 0.\end{align*}
\item[(b)] Nach Lemma \ref{g} sind die Funktionen $f_{\alpha}\colon U\to E$ für alle $\vert\alpha\vert\leq k$ stetig. Für $\alpha=0$ gilt $\partial^{\alpha} (f_0\vert_{U^{\circ}})=f_{\alpha}\vert_{U^{\circ}}$. Angenommen, dies ist 
für ein beliebiges $\al<k$ der Fall. Mit (a) gilt
\[f_{\alpha+e_j}\vert_{U^{\circ}}=\partial_j(f_{\alpha}\vert_{U^{\circ}})=\partial_j\partial^{\alpha}(f_{0}\vert_{U^{\circ}})=\partial^{\alpha+e_j}(f_{0}\vert_{U^{\circ}})\]
für alle $j\in\{1,\ldots,n\}$. Per Induktion folgt die Behauptung.
\item[(c)] Folgt aus (b).
\end{itemize}
\end{proof}

\begin{definition}[{vgl. \cite[S. 2]{Ti}}]
Für alle $l\in\N_0$ mit $l\leq k$, stetigen Halbnormen $q$ auf $E$ und kompakten Teilmengen $K\subseteq U$ seien die Halbnormen $\lVert \cdot\rVert_{\mathcal{E}^l,q,K}'$, ${\lVert \cdot\rVert_{\mathcal{E}^l,q,K}''}$ und ${\lVert \cdot\rVert_{\mathcal{E}^l,q,K}}\colon\EE^k(U,E)\to [0,\infty[$ durch
\[\lVert f\rVert_{\mathcal{E}^l,q,K}'\coloneqq\max_{\al\leq l}\sup_{x\in K}q( f_{\alpha}(x)),\]
\[\lVert f\rVert_{\mathcal{E}^l,q,K}''\coloneqq\max_{\al\leq l}\sup\left\{\frac{q(R_{y}^{l-\al}\p f(x))}{\Vert x-y\Vert^{l-\al}}\colon x\neq y\ \text{in }K\right\},\]
bzw.
\[\lVert f\rVert_{\mathcal{E}^l,q,K}\coloneqq \Vert f\Vert_{\mathcal{E}^l,q,K}'+\Vert f\Vert_{\mathcal{E}^l,q,K}''\]
definiert.\ftnote{Man sieht wie folgt, dass $\Vert f\Vert_{\mathcal{E}^l,q,K}''<\infty$ für $f\in\EE^k(U,E)$ gilt: Für jedes $\al\leq l$ gibt es ein $\delta_{\alpha}\in ]0,1[$ derart, dass $\frac{q(R_y^{l-\al}\p f(x))}{\Vert x-y\Vert^{l-\al}}\leq 1$ für alle $x\neq y$ in $K$ mit $\Vert x-y\Vert<\delta_{\alpha}$. Weil $K\times K\to E,\ (x,y)\mapsto R_y^{l-\al}\p f(x)$ stetig ist, gibt es ein $M_{\alpha}\geq 0$ mit $q(R_y^{l-\al}\partial^{\alpha}f(x))\leq M_{\alpha}$ für alle $(x,y)\in K\times K$. Dann gilt $\frac{q(R_y^{l-\al}\p f(x))}{\Vert x-y\Vert^{l-\al}}\leq\frac{M_{\alpha}}{\delta_{\alpha}^{l}}$ für alle $x\neq y$ in $K$ mit $\Vert x-y\Vert\geq\delta_{\alpha}$. Mit $M\coloneqq\max\{M_{\alpha}\colon\al\leq l\}$ und $\delta\coloneqq\min\{\delta_{\alpha}\colon\al\leq l\}$ folgt $\Vert f\Vert_{\EE^l,q,K}''\leq\max\left\{{1,\frac{M}{\delta^l}}\right\}<\infty$.} 
Wir versehen $\EE^k(U,E)$ mit der Hausdorffschen lokalkonvexen Topologie, welche durch die Halbnormen $m\lVert \cdot\rVert_{\mathcal{E}^l,q,K}$ mit $m\in\N$ und $l,\ q,\ K$ wie oben erzeugt wird.
\end{definition}

Dass jene eine gerichtete Menge von Halbnormen bilden, folgt aus dem nächsten Lemma.

\begin{lemma}\label{mm}
Seien $q$ eine stetige Halbnorm auf $E$ und $K\subseteq U$ kompakt. Für alle $j,l\in\N_0$ mit $j\leq l\leq k$ gibt es ein $m\in\N$ derart, dass $\lVert \cdot\rVert_{\mathcal{E}^j,q,K}\leq m\lVert \cdot\rVert_{\mathcal{E}^l,q,K}$.
\end{lemma}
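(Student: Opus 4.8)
The plan is to estimate the two constituents of $\lVert\cdot\rVert_{\mathcal{E}^j,q,K}=\lVert\cdot\rVert_{\mathcal{E}^j,q,K}'+\lVert\cdot\rVert_{\mathcal{E}^j,q,K}''$ separately against $\lVert\cdot\rVert_{\mathcal{E}^l,q,K}$. The $'$-part is immediate: because $j\leq l$, the maximum defining $\lVert f\rVert_{\mathcal{E}^j,q,K}'$ runs over a subset of the indices $\al\leq l$ occurring in $\lVert f\rVert_{\mathcal{E}^l,q,K}'$, so $\lVert f\rVert_{\mathcal{E}^j,q,K}'\leq\lVert f\rVert_{\mathcal{E}^l,q,K}'$ for every $f\in\EE^k(U,E)$. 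Hence the work lies entirely in the $''$-part.

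The algebraic heart of the matter is the relation between Taylor remainders of orders $j-\al$ and $l-\al$. Writing out the two remainders from the definition and subtracting (as in the computation in the preceding characterization lemma), I get
\[R_{y}^{j-\al}\p f(x)=R_{y}^{l-\al}\p f(x)+\sum_{j-\al<\bet\leq l-\al}\frac{(x-y)^{\beta}}{\beta!}f_{\alpha+\beta}(y)\]
for all $\al\leq j$ and $x,y\in K$. Applying $q$, using the triangle inequality, and dividing by $\lVert x-y\rVert^{j-\al}$ then separates the estimate into a remainder contribution and a finite sum of monomial contributions.

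For the remainder contribution I would factor $\lVert x-y\rVert^{-(j-\al)}=\lVert x-y\rVert^{l-j}\lVert x-y\rVert^{-(l-\al)}$ and set $D\coloneqq\max\{1,\text{diam}(K)\}$; since $\lVert x-y\rVert\leq D$ on the compact set $K$, this contribution is $\leq D^{l-j}\lVert f\rVert_{\mathcal{E}^l,q,K}''$. For each monomial contribution I would use $\lvert(x-y)^{\beta}\rvert\leq\lVert x-y\rVert^{\bet}$, so that the surviving power $\lVert x-y\rVert^{\bet-(j-\al)}$ has exponent in $[1,l-j]$ and is therefore $\leq D^{l-j}$; as $\al+\bet\leq l$, the factor $q(f_{\alpha+\beta}(y))$ is bounded by $\lVert f\rVert_{\mathcal{E}^l,q,K}'$ and $\beta!\geq 1$. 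Estimating the number of summands crudely by $(l+1)^n$ and passing to the supremum over $x\neq y$ in $K$ and the maximum over $\al\leq j$ gives
\[\lVert f\rVert_{\mathcal{E}^j,q,K}''\leq D^{l-j}\lVert f\rVert_{\mathcal{E}^l,q,K}''+(l+1)^nD^{l-j}\lVert f\rVert_{\mathcal{E}^l,q,K}'.\]

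Combining this with the $'$-estimate and choosing any $m\in\N$ with $m\geq\max\{1+(l+1)^nD^{l-j},\,D^{l-j}\}$ yields the claimed inequality $\lVert\cdot\rVert_{\mathcal{E}^j,q,K}\leq m\lVert\cdot\rVert_{\mathcal{E}^l,q,K}$. The one genuinely delicate point is the mismatch between the denominators $\lVert x-y\rVert^{j-\al}$ and $\lVert x-y\rVert^{l-\al}$: since $j-\al\leq l-\al$, the quotient could in principle be large when $\lVert x-y\rVert$ is large, and it is exactly the compactness of $K$ (through the diameter bound $D$) that tames it. The degenerate case in which $K$ has at most one point is harmless, since then every $''$-supremum is taken over the empty set and hence vanishes.
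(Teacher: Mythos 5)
Your proof is correct and follows essentially the same route as the paper: the same decomposition $R_{y}^{j-\al}\p f = R_{y}^{l-\al}\p f + \sum_{j-\al<\bet\leq l-\al}\frac{(x-y)^{\beta}}{\beta!}f_{\alpha+\beta}(y)$, the same diameter bound $\max\{1,\mathrm{diam}(K)\}^{l-j}$ taming the mismatched denominators, and the same crude count $(l+1)^n$ of summands. The only differences are cosmetic (notation $D$ versus $M$ and a slightly different final constant), so there is nothing to add.
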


\begin{proof}
Wir können $j<l$ annehmen, sonst ist die Aussage klar. Seien $M\coloneqq\max\{1,\text{diam}(K)\}$ und $m\in\N$ derart, dass $m\geq 1+(1+(l+1)^n)M^{l-j}$. Weiter sei $f=(f_{\alpha})_{\al\leq k}\in\EE^k(U,E)$. Für alle $\al\leq j$ und $x\neq y$ in $K$ gilt
\begin{align*}
\frac{q(R_{y}^{j-\al}\p f(x))}{\Vert x-y\Vert^{j-\al}}
&\leq M^{l-j}\frac{q(R_{y}^{l-\al}\p f(x))}{\Vert x-y\Vert^{l-\al}}
+\sum_{j-\al<\bet\leq l-\al}\underbrace{\frac{\vert (x-y)^{\beta}\vert}{\Vert x-y\Vert^{j-\al}}}_{\leq M^{l-j}}q(f_{\alpha+\beta}(y))\\
&\leq M^{l-j}\Vert f\Vert''_{\EE^l,q,K}+(l+1)^n M^{l-j}\Vert f\Vert'_{\EE^l,q,K}\\
&\leq (1+(l+1)^n)M^{l-j}\Vert f\Vert_{\EE^l,q,K}.
\end{align*}
Bilden wir das Supremum über alle $\al\leq j$ und $x\neq y$ in $K$, folgt
\[\Vert f\Vert''_{\EE^j,q,K}\leq (1+(l+1)^n)M^{l-j}\Vert f\Vert_{\EE^l,q,K}\]
und somit
\begin{align*}\Vert f\Vert_{\EE^j,q,K}
&=\Vert f\Vert'_{\EE^j,q,K}+\Vert f\Vert''_{\EE^j,q,K}
\leq \Vert f\Vert_{\EE^l,q,K}+(1+(l+1)^n)M^{l-j}\Vert f\Vert_{\EE^l,q,K}\\
&\leq m\Vert f\Vert_{\EE^l,q,K}.
\end{align*}
\end{proof}

Aus Lemma \ref{mm} folgt zudem, dass im Falle $k<\infty$ die Topologie auf $\EE^k(U,E)$ durch die Familie von Halbnormen $\big(\lVert\cdot\rVert_{\EE^k,q,K}\big)_{q,K}$ erzeugt wird.

\begin{lemma}\label{ff}
\begin{itemize}
\item[(a)] Für alle $l\leq k$ ist die lineare Abbildung
\[\text{\ee{pr}}_l\colon\EE^k(U,E)\to\EE^l(U,E),\quad f\mapsto \text{\ee{pr}}_l(f)\]
stetig.
\item[(b)] Für jede Teilmenge $V\subseteq U$ ist die lineare Abbildung
\[\EE^k(U,E)\to\EE^k(V,E),\quad f\mapsto f\vert_V\]
stetig.
\end{itemize}
\end{lemma}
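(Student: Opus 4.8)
Der Plan ist, in beiden Teilen die Stetigkeit der jeweiligen linearen Abbildung mit dem üblichen Halbnormkriterium nachzuweisen: Es genügt zu zeigen, dass jede erzeugende Halbnorm des Zielraums durch eine erzeugende Halbnorm des Ausgangsraums nach oben abgeschätzt (hier sogar wörtlich reproduziert) wird. Die zentrale Beobachtung ist dabei, dass die Halbnormen $\lVert\cdot\rVert_{\EE^l,q,K}$ in gewissem Sinne \emph{lokal} sind, nämlich nur von endlich vielen Komponenten des Jets und nur von dessen Verhalten auf $K$ abhängen.

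Für (a) würde ich zunächst feststellen: Für $\al\leq l'\leq l$ gilt $(\p(\text{pr}_l(f)))_0=f_{\alpha}$ und $T_y^{l'-\al}\p(\text{pr}_l(f))(x)=\sum_{\bet\leq l'-\al}\frac{(x-y)^{\beta}}{\beta!}f_{\alpha+\beta}(y)$, sodass das Restglied $R_y^{l'-\al}\p(\text{pr}_l(f))$ nur von den Komponenten $f_{\gamma}$ mit $\vert\gamma\vert\leq l'$ abhängt und folglich mit $R_y^{l'-\al}\p f$ übereinstimmt; ebenso ist $(\text{pr}_l(f))_{\alpha}=f_{\alpha}$ für $\al\leq l'$. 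Daraus ergibt sich die exakte Gleichheit $\lVert\text{pr}_l(f)\rVert_{\EE^{l'},q,K}=\lVert f\rVert_{\EE^{l'},q,K}$ für alle $l'\leq l$, stetigen Halbnormen $q$ auf $E$ und kompakten $K\subseteq U$. Somit wird jede erzeugende Halbnorm $m\lVert\cdot\rVert_{\EE^{l'},q,K}$ des Zielraums $\EE^l(U,E)$ (mit $l'\leq l\leq k$) in eine erzeugende Halbnorm des Ausgangsraums $\EE^k(U,E)$ zurückgezogen, was die Stetigkeit von $\text{pr}_l$ liefert.

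Für (b) würde ich ausnutzen, dass jede kompakte Teilmenge $K\subseteq V$ wegen $V\subseteq U$ zugleich eine kompakte Teilmenge von $U$ ist. Da in $R_y^{l-\al}\p(f\vert_V)(x)$ ausschließlich die Werte der $f_{\alpha+\beta}$ in Punkten $x,y\in K\subseteq V$ eingehen und dort $f_{\alpha+\beta}\vert_V$ mit $f_{\alpha+\beta}$ übereinstimmt, folgt $R_y^{l-\al}\p(f\vert_V)(x)=R_y^{l-\al}\p f(x)$ für alle $x,y\in K$ und damit $\lVert f\vert_V\rVert_{\EE^l,q,K}=\lVert f\rVert_{\EE^l,q,K}$ für jedes kompakte $K\subseteq V$. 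Erneut wird dadurch jede erzeugende Halbnorm des Zielraums in eine erzeugende Halbnorm des Ausgangsraums zurückgezogen. Eine eigentliche Schwierigkeit erwarte ich in keinem der beiden Teile; zu beachten ist lediglich das sorgfältige Einsetzen der Definitionen von $T_y^{l}$ und $R_y^{l}$, um die Gleichheit der Restglieder zu rechtfertigen. Insbesondere sind keine Konstanten oder echten Abschätzungen nötig, da beide Halbnorm-Identitäten exakte Gleichungen sind (die Abbildungen sind bezüglich der jeweiligen Halbnormen sogar isometrisch).
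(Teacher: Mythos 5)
Dein Beweis ist korrekt und entspricht genau dem Vorgehen der Arbeit: Dort wird ebenfalls nur festgestellt, dass $\Vert \text{pr}_l(f)\Vert_{\EE^j,q,K}=\Vert f\Vert_{\EE^j,q,K}$ (für $j\leq l$, $K\subseteq U$ kompakt) bzw. $\Vert f\vert_V\Vert_{\EE^j,q,K}=\Vert f\Vert_{\EE^j,q,K}$ (für $j\leq k$, $K\subseteq V$ kompakt) gilt, woraus die Stetigkeit folgt. Du begründest diese Halbnorm-Gleichheiten lediglich ausführlicher (Lokalität der Restglieder), was die Lücke im knappen Originalbeweis sauber füllt.
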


\begin{proof}
\begin{itemize}
\item[(a)] Für alle $j\in\N_0$ mit $j\leq l$, stetigen Halbnormen $q$ auf $E$ und kompakten Teilmengen $K\subseteq U$ gilt
\[\Vert \text{pr}_l(f)\Vert_{\EE^j,q,K}=\Vert f\Vert_{\EE^j,q,K}\]
für alle $f\in\EE^k(U,E)$.
\item[(b)] Für alle $j\in\N_0$ mit $j\leq k$, stetigen Halbnormen $q$ auf $E$ und kompakten Teilmengen $K\subseteq V$ gilt \[\Vert f\vert_V\Vert_{\EE^j,q,K}=\Vert f\Vert_{\EE^j,q,K}\] für alle $f\in\EE^k(U,E)$.
\end{itemize}
\end{proof}

\subsection*{Whitneysche $k$-Jets auf lokalkonvexen Mengen mit dichtem Inneren}
\addcontentsline{toc}{subsection}{\protect\numberline{}Whitneysche $k$-Jets auf lokalkonvexen Mengen mit dichtem Inneren}

Die Whitneyschen $k$-Jets auf einer lokalkonvexen Teilmenge $U\subseteq\R^n$ mit dichtem Inneren entsprechen den $C^k$"--Funk"-tio"-nen.

\begin{satz}\label{gg}
Ist $U\subseteq\R^n$ eine lokalkonvexe Teilmenge mit dichtem Inneren, so ist die Abbildung
\begin{align*}\rho_U^k\colon C^k(U,E)\to\EE^k(U,E),\quad f\mapsto(\partial^{\alpha} f)_{\lvert\alpha\rvert\leq k}\end{align*}
ein Isomorphismus von topologischen Vektorräumen mit inverser Abbildung
\[\Phi_U^k\colon\EE^k(U,E)\to C^k(U,E),\quad (f_{\alpha})_{\lvert\alpha\rvert\leq k}\mapsto f_0.\]
Insbesondere wird dann die Topologie auf $\EE^k(U,E)$ durch die Familie von Halbnormen $\big(\lVert\cdot\rVert'_{\EE^l,q,K}\big)_{l,q,K}$ erzeugt.
\end{satz}

\begin{proof}
Nach Satz \ref{dd} (b) gilt $(\p f)_{\al\leq k}\in \EE^k(U,E)$ für alle $f\in C^k(U,E)$, also ist $\rho_U^k$ sinnvoll definiert. Mit Lemma \ref{i} (c) sehen wir, dass $\Phi_U^k$ tatsächlich Werte in $C^k(U,E)$ hat und rechtsinvers zu $\rho_U^k$ ist. 
Gewiss ist auch $\rho_U^k$ rechtsinvers zu $\Phi_U^k$ und linear, also ein Isomorphismus von Vektorräumen. 
Für alle $l\in\N_0$ mit $l\leq k$, stetigen Halbnormen $q$ auf $E$ und kompakten Teilmengen $K\subseteq U$ gilt
\[\lVert\cdot\rVert_{C^l,q,K}\circ\Phi_U^k=\lVert \cdot\rVert'_{\EE^l,q,K}\leq \lVert \cdot\rVert_{\EE^l,q,K}.\]
Daraus folgt, dass $\Phi_U^k$ stetig ist und im Falle, dass auch $\rho_U^k$ stetig ist, die Topologie auf $\EE^k(U,E)$ durch die Familie von Halbnormen $\big(\lVert\cdot\rVert'_{\EE^l,q,K}\big)_{l,q,K}$ erzeugt wird.
Um die Stetigkeit von $\rho_U^k$ nachzuweisen, seien $l$, $q$, $K$ wie zuvor.
Da $K$ kompakt und $U$ lokalkonvex ist, existieren $x_1,\dots,x_m\in K$ und $r_1,\ldots,r_m>0$ mit $K\subseteq \bigcup_{i=1}^mB_{r_i}Û(x_i)$ derart, dass $\overline{B}^U_{2r_i}(x_i)$ für $i\in\{1,\ldots,m\}$ konvex ist. Dann ist
\[A_i\coloneqq \big\{x+t(y-x)\colon t\in [0,1],\ x,y\in \overline{B}^U_{2r_i}(x_i)\cap K\big\}\]
eine kompakte Teilmenge von $\overline{B}^U_{2r_i}(x_i)$.
Wir setzen $r\coloneqq\min\{1, r_1,\ldots,r_m\}$,
$M\coloneqq\max\{1,\text{diam}(K)\}$ und $A\coloneqq A_1\cup\ldots\cup A_m$. Seien nun $f\in C^k(U,E)$, $\al\leq l$ und $x\neq y$ in $K$. Um $\Vert \rho_U^k(f)\Vert''_{\EE^l,q,K}$ abschätzen zu können, führen wir eine Fallunterscheidung durch. Im Falle $\Vert x-y\Vert>r$ folgt aus
\[R_y^{l-\al}\p f(x)=\partial^{\alpha}f(x)-\sum_{\bet\leq l-\al}\frac{(x-y)^{\beta}}{\beta!}\partial^{\alpha+\beta}f(y)\]
die Abschätzung
\begin{align*}\frac{q(R_y^{l-\al}\p f(x))}{\Vert x-y\Vert^{l-\al}}
&\leq\frac{1}{r^l}\bigg( q(\p f(x))+\sum_{\bet\leq l-\al}\vert(x-y)^{\beta}\vert q(\partial^{\alpha+\beta}f(y))\bigg)\\
&\leq\frac{1}{r^l}\big(\Vert f\Vert_{C^l,q,K}+(l+1)^nM^l\Vert f\Vert_{C^l,q,K}\big)=:Q_1(f).
\end{align*}
Angenommen, es sind $\Vert x-y\Vert\leq r$ und $\al\neq l$. Sei $i\in\{1,\ldots,m\}$ derart, dass $x\in B_{r_i}^U(x_i)$. Dann liegt die Verbindungsstrecke zwischen $x$ und $y$ in $A_i$. Mit Satz \ref{dd} (a) gilt
\begin{align*}
&R_y^{l-\al}\p f(x)\\
&=(l-\al)\int_0^1\bigg(\sum_{\bet=l-\al}\frac{(x-y)^{\beta}}{\beta!}(1-t)^{l-\al-1}(\partial^{\alpha+\beta} f(y+t(x-y))-\partial^{\alpha+\beta}f(y))\bigg)dt.\end{align*}
Daraus folgt
\begin{align*}
&\frac{q(R_y^{l-\al}\p f(x))}{\Vert x-y\Vert^{l-\al}}\\
&\leq l\int_0^1\bigg(\sum_{\bet=l-\al}\frac{\vert(x-y)^{\beta}\vert}{\Vert x-y\Vert^{l-\al}}(q(\partial^{\alpha+\beta} f(y+t(x-y)))+q(\partial^{\alpha+\beta}f(y)))\bigg)dt\\
&\leq 2l(l+1)^n\Vert f\Vert_{C^l,q,A}=:Q_2(f).
\end{align*}
Falls $\al=l$ ist, gilt
\[q(R_y^0\p f(x))\leq q(\p f(x))+q(\p f(y))\leq 2\Vert f\Vert_{C^l,q,K}=:Q_3(f).\]
In allen drei Fällen ist
\[\frac{q(R_y^{l-\al}\p f(x))}{\Vert x-y\Vert^{l-\al}}\leq Q_1(f)+Q_2(f)+Q_3(f)=:Q(f).\]
Daraus folgt $\Vert \rho_U^k(f)\Vert''_{\EE^l,q,K}\leq Q(f)$ und somit
\[\Vert \rho_U^k(f)\Vert_{\EE^l,q,K}=\Vert \rho_U^k(f)\Vert'_{\EE^l,q,K}+\Vert \rho_U^k(f)\Vert''_{\EE^l,q,K}\leq\Vert f\Vert_{C^l,q,K}+Q(f),\]
wobei die rechte Seite eine stetige Halbnorm in $f$ ist. Demnach ist $\rho_U^k$ stetig.
\end{proof}

\subsection*{Verkleben von Whitneyschen $k$-Jets}
\addcontentsline{toc}{subsection}{\protect\numberline{}Verkleben von Whitneyschen $k$-Jets}

Sind $U\subseteq\R^n$ eine Teilmenge und $(U_j)_{j\in J}$ eine Überdeckung von $U$ mit relativ offenen Teilmengen $U_j\subseteq U$, so werden wir sehen, dass für jede Familie $(f_j)_{j\in J}\in\prod_{j\in J}\EE^k(U_j,E)$ mit $f_i\vert_{U_i\cap U_j}=f_j\vert_{U_i\cap U_j}$ für alle $i,j\in J$ ein $f\in\EE^k(U,E)$ derart existiert, dass $f\vert_{U_j}=f_j$ für alle $j\in J$.

\begin{lemma}\label{ss}
Es seien $K\subseteq\R^n$ eine kompakte Teilmenge und $U_1,\ldots,U_m\subseteq K$ relativ offene Teilmengen mit $K=U_1\cup\ldots\cup U_m$. 
\begin{itemize}
\item[(a)] Ist $f\in\J^k(K,E)$ ein $k$-Jet mit $f\vert_{U_j}\in\EE^k(U_j,E)$ für alle $j\in \{1,\ldots,m\}$, so gilt $f\in\EE^k(K,E)$.
\item[(b)] Sind $l\in\N_0$ mit $l\leq k$ und $q$ eine stetige Halbnorm auf $E$,
dann gibt kompakte Teilmengen $K_j\subseteq U_j$ für $j\in \{1,\ldots,m\}$ und ein $C>0$, so dass
\[\Vert f\Vert_{\EE^l,q,K}\leq C\max_{1\leq j\leq m}\Vert f\vert_{U_j}\Vert_{\EE^l,q,K_j}\]
für alle $f\in\EE^k(K,E)$.
\end{itemize}
\end{lemma}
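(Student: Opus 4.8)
The plan is to reduce the global Whitney condition on $K$ to the local conditions on the $U_j$ by means of a Lebesgue-number argument, the only genuinely delicate point being pairs $x,y$ that lie in no single member of the cover. First I would shrink the cover: since $K$ is compact and therefore normal, the shrinking lemma furnishes relatively open sets $W_1,\ldots,W_m\subseteq K$ with $\overline{W_j}\subseteq U_j$ (closure in $K$) and $K=W_1\cup\ldots\cup W_m$. Putting $K_j\coloneqq\overline{W_j}$ gives compact subsets $K_j\subseteq U_j$ with $K=\bigcup_j K_j$; these are the sets claimed in (b). As $(W_j)_j$ is an open cover of the compact set $K$, it has a Lebesgue number $\lambda>0$, so that any $x,y\in K$ with $\lVert x-y\rVert<\lambda$ lie in a common $W_j$, hence in a common $K_j\subseteq U_j$.

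For (a) I would first note that, $K$ being compact, it suffices to verify the defining estimate of $\EE^k(K,E)$ for the compact set $K$ itself, since any smaller compact subset inherits the same $\delta$. Fix $l\leq k$, $\al\leq l$, a continuous seminorm $q$ and $\eps>0$. For each $j$ the hypothesis $f\vert_{U_j}\in\EE^k(U_j,E)$, applied to the compact set $K_j\subseteq U_j$, yields a $\delta_j>0$ with $q(R^{l-\al}_{y}\p f(x))\leq\eps\lVert x-y\rVert^{l-\al}$ for all $x,y\in K_j$ with $0<\lVert x-y\rVert<\delta_j$. Then $\delta\coloneqq\min\{\lambda,\delta_1,\ldots,\delta_m\}$ does the job: if $x,y\in K$ satisfy $0<\lVert x-y\rVert<\delta$, then $\lVert x-y\rVert<\lambda$ places both points in a common $K_j$, and $\lVert x-y\rVert<\delta_j$ delivers the bound. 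Hence $f\in\EE^k(K,E)$.

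For (b) the prime part is immediate: every $x\in K$ lies in some $K_j$, so $\Vert f\Vert'_{\EE^l,q,K}\leq\max_j\Vert f\vert_{U_j}\Vert'_{\EE^l,q,K_j}$. The double-prime part I would treat by a case split on $\lVert x-y\rVert$ versus $\lambda$, with $M\coloneqq\max\{1,\operatorname{diam}(K)\}$ and $\mu\coloneqq\min\{1,\lambda\}$. If $x\neq y$ in $K$ with $\lVert x-y\rVert<\lambda$, both lie in a common $K_j$, so the quotient is at most $\Vert f\vert_{U_j}\Vert''_{\EE^l,q,K_j}$. If instead $\lVert x-y\rVert\geq\lambda$, I expand $R^{l-\al}_{y}\p f(x)=f_\alpha(x)-\sum_{\bet\leq l-\al}\frac{(x-y)^\beta}{\beta!}f_{\alpha+\beta}(y)$, bound $\lvert(x-y)^\beta\rvert\leq M^l$ and $\lVert x-y\rVert^{l-\al}\geq\mu^l$, and estimate each $q(f_{\alpha+\beta}(\cdot))$ by $\Vert f\Vert'_{\EE^l,q,K}$, obtaining $\frac{q(R^{l-\al}_{y}\p f(x))}{\lVert x-y\rVert^{l-\al}}\leq\mu^{-l}(1+(l+1)^nM^l)\Vert f\Vert'_{\EE^l,q,K}$. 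Combining the two cases with the prime estimate and adding the prime and double-prime parts yields a constant $C>0$, depending only on $l$, $n$, $M$ and $\lambda$, with $\Vert f\Vert_{\EE^l,q,K}\leq C\max_j\Vert f\vert_{U_j}\Vert_{\EE^l,q,K_j}$.

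The main obstacle — and the reason for the whole construction — is precisely the control of the Taylor remainder for a pair $x,y$ that need not lie in a single $U_j$, about which the local hypotheses say nothing. The Lebesgue number removes this for nearby points, while for far-apart points the remainder is harmlessly dominated by the zeroth-order ($\EE'$) seminorms, the denominator then being bounded away from $0$.
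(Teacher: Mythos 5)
Your proposal is correct and takes essentially the same approach as the paper: compact sets $K_j\subseteq U_j$ covering $K$ together with a positive distance threshold below which any two points of $K$ lie in a common $K_j$, followed in (a) by taking the minimum of the local $\delta$'s and in (b) by the same near/far case split, where far pairs are handled by expanding the remainder and dominating it by the prime seminorms with the denominator bounded below. The only (cosmetic) difference is that the paper manufactures its threshold by hand — balls $B_{r_i}(x_i)$ covering $K$ with $\overline{B}_{2r_i}(x_i)\cap K\subseteq U_{j(i)}$ and $r\coloneqq\min\{1,r_1,\ldots,r_t\}$ — whereas you obtain it by invoking the shrinking lemma and the Lebesgue number lemma.
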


\begin{proof}
Wir finden $x_1,\ldots,x_t\in K$ und $r_1,\ldots,r_t>0$ mit $K\subseteq \bigcup_{i=1}^tB_{r_i}(x_i)$ derart, dass es für jedes $i\in\{1,\ldots,t\}$ ein $j(i)\in\{1,\ldots,m\}$ gibt mit
\[L_i\coloneqq\overline{B}_{2 r_i}(x_i)\cap K\subseteq U_{j(i)}.\]
Dann ist
\[K_j\coloneqq\bigcup_{\substack{1\leq i\leq t\\ j(i)=j}} L_i\subseteq U_j\]
eine kompakte Teilmenge für alle $j\in\{1,\ldots,m\}$. Wir setzen $r\coloneqq\min\{1, r_1,\ldots, r_t\}$, $M\coloneqq\max\{1,\text{diam}(K)\}$.
\begin{itemize}
\item[(a)] Sei $f\in\J^k(K,E)$ derart, dass $f\vert_{U_j}\in\EE^k(U_j,E)$ für alle $j\in \{1,\ldots,m\}$. Sind $l\in\N_0$ mit $l\leq k$, $\al\leq l$, $q$ eine stetige Halbnorm auf $E$ und $\eps>0$, dann gibt es ein $\delta\in ]0,r[$, so dass
\[\frac{q(R_y^{l-\al}\p f(x))}{\Vert x-y\Vert^{l-\al}}\leq\eps\]
für alle $j\in\{1,\dots,m\}$ und $x,y\in K_j$ mit $0<\Vert x-y\Vert<\delta.$ Da aber für beliebige $x,y\in K$ mit $0<\Vert x-y\Vert<\delta$ ein $i\in\{1,\ldots,t\}$ mit $x,y\in L_i\subseteq K_{j(i)}$ existiert, folgt $f\in\EE^k(K,E)$. 
\item[(b)] Seien $l, q$ wie zuvor und $C\coloneqq 2+\frac{1}{r^l}(1+(l+1)^n M^l)$. Für jedes $f\in\EE^k(K,E)$ gilt
\[\Vert f\Vert'_{\EE^l,q,K}=\max_{1\leq j\leq m}\Vert f\vert_{U_j}\Vert'_{\EE^l,q,K_j}\leq \max_{1\leq j\leq m}\Vert f\vert_{U_j}\Vert_{\EE^l,q,K_j}.\]
Seien $\al\leq l$ und $x\neq y$ in $K$. Im Falle $\Vert x-y\Vert\leq r$ gibt es ein $i\in\{1,\ldots,m\}$ mit $x,y\in K_i$, also gilt
\[\frac{q(R_y^{l-\al}\p f(x))}{\Vert x-y\Vert^{l-\al}}\leq\Vert f\vert_{U_i}\Vert_{\EE^l,q,K_i}\leq \max_{1\leq j\leq m}\Vert f\vert_{U_j}\Vert_{\EE^l,q,K_j}.\]
Für $\Vert x-y\Vert> r$ gilt
\begin{align*}\frac{q(R_y^{l-\al}\p f(x))}{\Vert x-y\Vert^{l-\al}}
&\leq\frac{1}{r^l}\bigg( q(\p f(x))+\sum_{\bet\leq l-\al}\vert(x-y)^{\beta}\vert q(\partial^{\alpha+\beta}f(y))\bigg)\\
&\leq\frac{1}{r^l}\big(\Vert f\Vert'_{\EE^l,q,K}+(l+1)^nM^l\Vert f\Vert'_{\EE^l,q,K}\big)\\
&\leq\frac{1}{r^l}(1+(l+1)^n M^l)\max_{1\leq j\leq m}\Vert f\vert_{U_j}\Vert_{\EE^l,q,K_j}.
\end{align*}
Mit Übergang zum Supremum folgt
\[\Vert f\Vert''_{\EE^l,q,K}\leq\Big(1+\frac{1}{r^l}(1+(l+1)^n M^l)\Big)\max_{1\leq j\leq m}\Vert f\vert_{U_j}\Vert_{\EE^l,q,K_j}\]
und somit
\[\Vert f\Vert_{\EE^l,q,K}\leq C\max_{1\leq j\leq m}\Vert f\vert_{U_j}\Vert_{\EE^l,q,K_j}.\]
\end{itemize}
\end{proof}

\begin{satz}\label{tt}
Es seien $U\subseteq\R^n$ eine Teilmenge, $(U_j)_{j\in J}$ eine Überdeckung von $U$ mit relativ offenen Teilmengen $U_j\subseteq U$ und \[R\coloneqq\bigg\{(f_j)_{j\in J}\in\prod_{j\in J}\EE^k(U_j,E)\colon\; f_i\vert_{U_i\cap U_j}=f_j\vert_{U_i\cap U_j}\ \text{\ee{für alle }} i,j\in J\bigg\}.\]
Dann ist die Abbildung
\begin{align}\label{gl.pp}\EE^k(U,E)\to R,\quad f\mapsto (f\vert_{U_j})_{j\in J}\end{align}
ein Isomorphismus von topologischen Vektorräumen.
\end{satz}

\begin{proof}
Man sieht leicht, dass $\eqref{gl.pp}$ stetig und linear ist. Für ein gegebenes $(f_j)_{j\in J}\in R$ sei $f\in\J^k(U,E)$ der durch $f\vert_{U_j}=f_j$ für $j\in J$ definierte $k$-Jet. Sei $K\subseteq U$ eine kompakte Teilmenge. Dann existieren $j(1),\ldots, j(m)\in J$ mit $K=\bigcup_{i=1}^{m}(U_{j(i)}\cap K).$
Nach Lemma \ref{ss} (a) gilt $f\vert_K\in\EE^k(K,E)$. Da $K$ beliebig war, folgt $f\in\EE^k(U,E)$.
Für alle $l\in\N_0$ mit $l\leq k$ und stetigen Halbnormen $q$ auf $E$ gibt es nach Lemma \ref{ss} (b), unabhängig von $f$, kompakte Teilmengen $K_i\subseteq U_{j(i)}\cap K$ für $i\in\{1,\ldots, m\}$ und ein $C>0$ derart, dass
\[\Vert f\Vert_{\EE^l,q,K}\leq C\max_{1\leq i\leq m}\Vert f\vert_{U_{j(i)}\cap K}\Vert_{\EE^l,q,K_i}=C\max_{1\leq i\leq m}\Vert f_{j(i)}\Vert_{\EE^l,q,K_{i}};\]
dabei ist die rechte Seite eine stetige Halbnorm in $(f_j)_{j\in J}$. Wir sehen also, dass die zu \eqref{gl.pp} inverse Abbildung
\[R\to\EE^k(U,E),\quad (f_j)_{j\in J}\mapsto f\ \text{mit }f\vert_{U_j}=f_j\ \text{für }j\in J\]
stetig ist.
\end{proof}

\section{Der Whitneysche Fortsetzungssatz für vektorwertige Funktionen}

Wir verfolgen das Ziel, den Whitneyschen Fortsetzungssatz für vektorwertige Funktionen zu beweisen.

\begin{satz}[\textbf{Whitneyscher Fortsetzungssatz}]\label{k}
Es seien $k\in\N_0$, $U\subseteq\R^n$ eine lokalkonvexe Teilmenge mit dichtem Inneren und $A\subseteq U$ eine relativ abgeschlossene, lokalkompakte Teilmenge. Dann besitzt die stetige lineare Abbildung
\[\rho_{A,U}^k\colon C^k(U,E)\to \EE^k(A,E),\quad f\mapsto ((\p f)\vert_A)_{\al\leq k}\]
eine stetige lineare Rechtsinverse
\[\Phi_{U,A}^k\colon \EE^k(A,E)\to C^k(U,E).\]
Letztere nennt man einen \ee{stetigen linearen Fortsetzungsoperator}.
\end{satz}

\begin{korollar}[\textbf{Fortsetzung von $C^k$-Funktionen}]\label{hh}
Es seien $k\in\N_0$, $U\subseteq\R^n$ eine lokalkonvexe Teilmenge mit dichtem Inneren und $A\subseteq U$ eine relativ abgeschlossene, lokalkompakte, lokalkonvexe Teilmenge mit dichtem Inneren. Dann besitzt die Einschränkung
\begin{align}\label{gl.kk}C^k(U,E)\to C^k(A,E),\quad f\mapsto f\vert_A\end{align}
eine stetige lineare Rechtsinverse.
\end{korollar}

\begin{proof}[Beweis von Korollar \ref{hh}]
Betrachten wir das kommutative Diagramm
\[\begin{xy}\xymatrix{
C^k(U,E)\ar[rd]_{\eqref{gl.kk}}\ar[r]^{\rho_{A,U}^k}& \EE^k(A,E)\ar[d]^{\Phi_A^k}_{\cong}\\ & C^k(A,E)  
}\end{xy},\]
wobei $\Phi_A^k$ nach Satz \ref{gg} ein Isomorphismus von topologischen Vektorräumen ist.
Da $\rho_{A,U}^k$ eine stetige lineare Rechtsinverse besitzt, hat auch \eqref{gl.kk} eine solche.
\end{proof}

\subsection*{Die Whitneysche Partition der Eins}
\addcontentsline{toc}{subsection}{\protect\numberline{}Die Whitneysche Partition der Eins}

Der stetige lineare Fortsetzungsoperator wird mithilfe einer Whitneyschen Partition der Eins konstruiert. Dazu folgen wir der Argumentation von Whitney \cite[\S\ 7–10]{Wh} und Bierstone \cite[Lemma 2.5]{Bi}.

Sei $A\subseteq\R^n$ eine abgeschlossene, nichtleere Teilmenge. Für alle $j\in\N_0$ sei $L_j$ die Menge der abgeschlossenen Würfel der Form
\[\left[\frac{z_1}{2^j},\frac{z_1+1}{2^j}\right]\times\ldots\times\left[\frac{z_n}{2^j},\frac{z_n+1}{2^j}\right]\subseteq\R^n\ \text{mit }z_1,\ldots,z_n\in\mathbb{Z}.\]
Sei $K_0$ die Menge der Würfel $C\in L_0$ mit $d(C,A)\geq 4\sqrt{n}$. Für $j\in\N$ definiert man rekursiv $K_j$ als die Menge derjenigen Würfel $C\in L_j$ mit \[d(C,A)\geq\frac{4\sqrt{n}}{2^j},\] die in keinem der Würfel aus $K_0\cup\ldots\cup K_{j-1}$ enthalten sind. Offenbar ist $\R^n\setminus A$ die Vereinigung aller Würfel aus $W\coloneqq\bigcup_{j\in\N_0}K_j$, und es gilt
\[\text{diam}(C)=\frac{\sqrt{n}}{2^j}\]
für alle $j\in\N_0$ und $C\in K_j$. Für jedes $C\in W$ seien $y_C$ der Mittelpunkt von $C$, $l_C$ die Seitenlänge von $C$, $x_C\in A$ ein fest gewählter Punkt mit $d(y_C,A)=\lVert y_C-x_C\rVert$ und
\[D_C\coloneqq\Big\{x\in\R^n \colon \lVert x-y_C\rVert_{\infty}<\frac{3}{4}l_C\Big\}.\]

\begin{lemma}\label{l}
\begin{itemize}
\item[(a)] Für alle $j\in\N$ und $C\in K_j$ gilt
\[d(C,A)<\frac{10\sqrt{n}}{2^{j}}.\]
\item[(b)] An jeden Würfel aus $W$ können höchstens Würfel aus $W$ mit halber, gleicher oder doppelter Seitenlänge grenzen.
\item[(c)] Für alle $C,C_*\in W$ gilt $D_C\cap D_{C_*}\neq\emptyset$ genau dann, wenn $C\cap C_*\neq\emptyset$.
\item[(d)] Es gibt ein $c\in\N$ derart, dass
\[\lvert\{C\in W\colon C\cap C_*\neq \emptyset\}\rvert\leq c\]
für alle $C_*\in W$.
\item[(e)] Die offene Überdeckung $(D_C)_{C\in W}$ von $\R^n\setminus A$ ist lokalendlich.
\item[(f)] Es seien $C,C_*\in W$ mit $C\cap C_*\neq\emptyset$ und $y\in C,y_*\in C_*$. Wir setzen $\delta\coloneqq d(y,A)$ und $\delta_*\coloneqq d(y_*,A)$ (oder $\delta\coloneqq\lVert y-x_0\rVert$ und $\delta_*\coloneqq\lVert y_*-x_0\rVert$ für einen gegebenen Punkt $x_0\in A$). Dann gilt
\[\delta<2\delta_*.\]
\item[(g)] Für alle $C\in W\setminus K_0$ und $y_*\in C$ gilt
\[d(y_*,A)<14 \sqrt{n}l_C.\]
\end{itemize}
\end{lemma}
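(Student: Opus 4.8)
The plan is to prove the parts in the order (a), (b), then (c), (d), (f), (e), and finally (g), exploiting the dependency that the size–comparison (b) underlies the geometric parts (c), (d), (f), while (a) directly yields (g). The single estimate that drives everything is the uniform bound $d(C,A)\ge 4\,\text{diam}(C)$ for every $C\in W$: a cube $C\in K_j$ satisfies $d(C,A)\ge 4\sqrt n/2^j=4\sqrt n\,l_C=4\,\text{diam}(C)$ by the very definition of $K_j$ (read $l_C=1$ when $j=0$).

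For (a), given $C\in K_j$ with $j\ge 1$ I would pass to the parent cube $C'\in L_{j-1}$ containing $C$. Since $C$ entered $K_j$, it lies in no cube of $K_0\cup\dots\cup K_{j-1}$; thus $C'\notin K_{j-1}$, and $C'$ cannot be contained in any $C''\in K_i$ with $i\le j-2$ (otherwise $C\subseteq C'\subseteq C''$ would contradict $C\in K_j$). Hence $C'$ failed the distance condition, $d(C',A)<4\sqrt n/2^{j-1}=8\sqrt n/2^j$, and the triangle estimate $d(C,A)\le\text{diam}(C')+d(C',A)$ with $\text{diam}(C')=2\sqrt n/2^j$ gives $d(C,A)<10\sqrt n/2^j$. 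For (b), by symmetry let $C\in K_j$, $C_*\in K_{j_*}$ with $j_*\ge j$ and fix $y\in C\cap C_*$; then $4\sqrt n/2^j\le d(C,A)\le d(y,A)\le d(C_*,A)+\text{diam}(C_*)<11\sqrt n/2^{j_*}$ by (a), so $2^{j_*}<\tfrac{11}{4}2^j$ and therefore $j_*\le j+1$ (the cases $j=0$ or $j_*=0$ being trivial). Thus adjacent cubes differ in side length by at most a factor $2$.

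Parts (f), (g), (d), (e) are then routine. For (f) I would take $z\in C\cap C_*$ and split $\delta=d(y,A)\le\text{diam}(C)+\text{diam}(C_*)+\delta_*$; since (b) gives $\text{diam}(C)\le 2\,\text{diam}(C_*)$ and the backbone bound gives $\delta_*\ge d(C_*,A)\ge 4\,\text{diam}(C_*)$, the first two terms sum to at most $3\,\text{diam}(C_*)<\delta_*$, whence $\delta<2\delta_*$; the same chain works verbatim for the $x_0$–version since only $\delta_*\ge 4\,\text{diam}(C_*)$ is used. Part (g) is immediate from (a): for $C\in K_j$, $j\ge 1$, and $y_*\in C$, $d(y_*,A)\le\text{diam}(C)+d(C,A)<\sqrt n\,l_C+10\sqrt n\,l_C<14\sqrt n\,l_C$. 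For (d), (b) restricts the neighbours of $C_*$ to the three size classes $l_{C_*}/2,\,l_{C_*},\,2l_{C_*}$, and in each class a coordinatewise count of dyadic cubes meeting $C_*$ is bounded by $4^n$, so I would take $c:=3\cdot 4^n$. Finally (e) is formal: given $x_0\in\R^n\setminus A$ pick $C_0\in W$ with $x_0\in D_{C_0}$; by (c) the open neighbourhood $D_{C_0}$ meets $D_C$ only when $C\cap C_0\neq\emptyset$, and by (d) there are at most $c$ such $C$.

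The main obstacle is the forward implication in (c). Writing each $D_C$ as the open cube of side $\tfrac32 l_C$ about $y_C$, the inclusion $C\subseteq D_C$ settles "$\Leftarrow$" immediately, but "$\Rightarrow$" is arithmetic rather than metric. The centre of a cube in $L_j$ has every coordinate an odd multiple of $2^{-(j+1)}$; hence if $l_{C_*}=l_C/2$, each coordinate of $y_C-y_{C_*}$ is an \emph{odd} multiple of $l_C/4$, and the hypothesis $\lVert y_C-y_{C_*}\rVert_\infty<\tfrac34 l_C+\tfrac34 l_{C_*}=\tfrac98 l_C$ forces each coordinate into $\{\pm\tfrac14,\pm\tfrac34\}\,l_C$, i.e.\ $\lVert y_C-y_{C_*}\rVert_\infty\le\tfrac34 l_C=\tfrac12 l_C+\tfrac12 l_{C_*}$ — precisely the condition for the closed cubes $C,C_*$ to meet. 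The equal-size case $l_{C_*}=l_C$ is analogous but simpler, the centre differences there being integer multiples of $l_C$. Keeping track of these parities and the attendant case distinction is the only genuinely delicate point; everything else reduces to the triangle inequality and the definition of $W$.
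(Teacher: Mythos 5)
Your parts (a), (b), (d), (e), (f) and (g) are correct and run essentially parallel to the paper's argument (your (g), via $d(y_*,A)\leq\operatorname{diam}(C)+d(C,A)<11\sqrt n\,l_C$, is even a slight simplification of the paper's detour through $\tfrac34\delta_*\leq d(C,A)$). The genuine gap is exactly where you located the difficulty: the forward implication of (c). Your parity argument treats only the cases $l_{C_*}\in\{l_C/2,\,l_C\}$ (up to symmetry), but at that point you are not entitled to restrict to them. The factor-two bound on side lengths comes from (b), which applies to cubes that \emph{touch} — and that $C$ and $C_*$ touch is precisely the conclusion you are trying to establish, so invoking it is circular. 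Moreover, the missing cases cannot be closed by dyadic arithmetic at all: for side ratio $4$ the implication is false for general dyadic cubes. Take $C=[0,1]^n$ and $C_*=[\tfrac54,\tfrac32]\times[0,\tfrac14]^{n-1}$; these are disjoint, yet every coordinate of $y_C-y_{C_*}$ has modulus at most $\tfrac78<\tfrac34\,l_C+\tfrac34\,l_{C_*}=\tfrac{15}{16}$, so $D_C\cap D_{C_*}\neq\emptyset$. Hence the proof of (c)$\Rightarrow$ must use that both cubes lie in $W$ (i.e.\ the distance conditions relative to $A$), not merely that they are dyadic.

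The gap is fixable along your own lines: first show that $D_C\cap D_{C_*}\neq\emptyset$ already forces comparable side lengths, and only then run your (correct) parity computation. Indeed, if $x\in D_C\cap D_{C_*}$ and, say, $l_{C_*}\leq l_C$, then $d(x,C)<\tfrac{\sqrt n}{4}l_C$ and $d(x,C_*)<\tfrac{\sqrt n}{4}l_{C_*}$, so the defining bound $d(C,A)\geq 4\sqrt n\,l_C$ together with (a) applied to $C_*$ (if $C_*\in K_0$, then $l_{C_*}=1=l_C$ and nothing is to prove) yields
\[4\sqrt n\,l_C\leq d(C,A)\leq d(x,A)+\tfrac{\sqrt n}{4}l_C\leq d(C_*,A)+\tfrac{5\sqrt n}{4}l_{C_*}+\tfrac{\sqrt n}{4}l_C<\tfrac{45\sqrt n}{4}l_{C_*}+\tfrac{\sqrt n}{4}l_C,\]
whence $l_C<3l_{C_*}$ and therefore $l_C\leq 2l_{C_*}$, putting you back in the cases your parity argument covers. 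For comparison, the paper avoids parity entirely: assuming $C\cap C_*=\emptyset$, it produces from the construction of $W$ a cube $C'\in W$ bordering $C$ with $l_{C'}\leq d_\infty(C,C_*)$, so that (b) gives $d_\infty(C,C_*)\geq l_{C'}\geq\tfrac12 l_C$, while any point $x\in D_C\cap D_{C_*}$ would give $d_\infty(C,C_*)\leq d_\infty(x,C)+d_\infty(x,C_*)<\tfrac14 l_C+\tfrac14 l_{C_*}\leq\tfrac12 l_C$, a contradiction.
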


\begin{proof}
\begin{itemize}
\item[(a)] Es liegt $C$ in einem Würfel $C'\in L_{j-1}\setminus K_{j-1}$, welcher in keinem der Würfel aus $K_0\cup\ldots\cup K_{j-2}$ enthalten ist. Da für diesen $d(C',A)<\frac{4\sqrt n}{2^{j-1}}$ gelten muss, folgt
\[d(C,A)\leq d(C',A)+\text{diam}(C')<\frac{4\sqrt n}{2^{j-1}}+\frac{\sqrt n}{2^{j-1}}=\frac{10\sqrt n}{2^{j}}.\]
\item[(b)] Seien $C\in K_i$ und $C'\in K_{i+j}$ mit $i,j\in\N_0$ und $j\geq 2$. Mit (a) folgt
\begin{align*}d(C,C')&\geq d(C,A)-d(C',A)-\text{diam}(C')> \frac{4\sqrt{n}}{2^i}-\frac{10\sqrt n}{2^{i+j}}-\frac{\sqrt{n}}{2^{i+j}}\\
&=\frac{(2^j\cdot 4-11)\sqrt{n}}{2^{i+j}}>0,\end{align*}
also grenzt $C$ nicht an $C'$.
\item[(c)] Wir können ohne Einschränkung annehmen, dass $l_{C_*}\leq l_C$. Es ist klar, dass $D_C\cap D_{C_*}\neq\emptyset$ aus $C\cap C_*\neq\emptyset$ folgt. Es gelte nun $C\cap C_*=\emptyset$ und somit $d_{\infty}(C,C_*)>0$. Dann gibt es nach Konstruktion von $W$ einen Würfel $C'\in W$ mit $l_{C'}\leq d_{\infty}(C,C_*)$, welcher an $C$ grenzt. Nach (b) gilt aber $\frac{1}{2}l_C\leq l_{C'}$, also
\begin{align*}
\frac{1}{2}l_C\leq d_{\infty}(C,C_*).\end{align*}
Angenommen, es existiert ein $x\in D_C\cap D_{C_*}$. Dann gilt $d_{\infty}(x,C)<\frac{1}{4}l_C$ und  $d_{\infty}(x,C_*)<\frac{1}{4}l_{C_*}\leq \frac{1}{4}l_C$, woraus
\[d_{\infty}(C,C_*)\leq d_{\infty}(x,C)+d_{\infty}(x,C_*)<\frac{1}{2}l_C\]
folgt, ein Widerspruch. 
Somit gilt $D_C\cap D_{C_*}=\emptyset$.

\item[(d)] Folgt aus (b).
\item[(e)] Folgt aus (c) und (d).
\item[(f)] Sei $j\in\N_0$ derart, dass $C_*\in K_j$. Da die Seitenlänge von $C$ höchstens doppelt so groß wie die von $C_*$ sein kann, gilt $\text{diam}(C)\leq\frac{\sqrt{n}}{2^{j-1}}$ und damit
\[\lVert y-y_*\rVert\leq \text{diam}(C)+\text{diam}(C_*)\leq\frac{\sqrt{n}}{2^{j-1}}+\frac{\sqrt{n}}{2^{j}}=\frac{3\sqrt{n}}{2^j}.\]
Zusammen mit $\delta_*\geq\frac{4\sqrt{n}}{2^j}$ folgt $\delta\leq\delta_*+\Vert y-y_*\Vert<2\delta_*$.
\item[(g)] Seien $\delta_*\coloneqq d(y_*,A)$ und $j\in\N$ derart, dass $C\in K_j$. Aus $\text{diam}(C)=\frac{\sqrt{n}}{2^j}\leq \frac{1}{4}\delta_*$ folgt
\[\delta_*\leq d(C,A)+\text{diam}(C)\leq d(C,A)+\frac{1}{4}\delta_*,\] also
\[\frac{3}{4}\delta_* \leq d(C,A)<\frac{10\sqrt{n}}{2^j}\]
und somit
\[\delta_*< \frac{40\sqrt{n}}{3\cdot 2^j}<\frac{14\sqrt{n}}{2^j}=14\sqrt{n}l_C.\]
\end{itemize}
\end{proof} 

\begin{lemma}[\textbf{Whitneysche Partition der Eins}]\label{j}
Auf $\R^n\setminus A$ existiert eine $C^{\infty}$-Partition der Eins $(\varphi_C)_{C\in W}$ mit den folgenden Eigenschaften:
\begin{itemize}
\item[(a)] Für alle $C\in W$ gilt $\text{\ee{supp}}(\varphi_C)\subseteq D_C$.
\item[(b)] Für alle $k\in\N_0$ gibt es ein $N_k>0$ derart, dass 
\[\vert\p\varphi_C(y_*)\vert<\frac{N_k}{d(y_*,A)^{\al}}\]
für alle $\al\leq k$, $C\in W$ und $y_*\in\R^n\setminus A$ mit $d(y_*,A)<4\sqrt{n}$.
\end{itemize}
\end{lemma}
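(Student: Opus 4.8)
The plan is to obtain the partition of unity by normalising a family of rescaled bump functions adapted to the cubes of $W$. I fix once and for all a smooth function $\theta\colon\R^n\to[0,1]$ with $\theta(x)=1$ for $\lVert x\rVert_\infty\leq\frac12$ and $\operatorname{supp}(\theta)\subseteq\{x\colon\lVert x\rVert_\infty<\frac34\}$, and for each $C\in W$ I set
\[\psi_C(x)\coloneqq\theta\!\left(\frac{x-y_C}{l_C}\right).\]
Then $\psi_C$ is smooth, $0\le\psi_C\le1$, it equals $1$ on $C$, and $\operatorname{supp}(\psi_C)\subseteq D_C$. By Lemma \ref{l}(e) the family $(D_C)_{C\in W}$ is locally finite, so $\Psi\coloneqq\sum_{C\in W}\psi_C$ is a locally finite sum, hence smooth on $\R^n\setminus A$; since every point of $\R^n\setminus A$ lies in some cube $C$ on which $\psi_C=1$, we have $\Psi\ge1$ everywhere. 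I then define $\varphi_C\coloneqq\psi_C/\Psi$; this is a $C^\infty$-partition of unity on $\R^n\setminus A$ with $\operatorname{supp}(\varphi_C)=\operatorname{supp}(\psi_C)\subseteq D_C$, which gives (a) at once.

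The estimate (b) is the substance. The chain rule gives $\partial^\alpha\psi_C(x)=l_C^{-\al}(\partial^\alpha\theta)(\frac{x-y_C}{l_C})$, hence $\lvert\partial^\alpha\psi_C(x)\rvert\le\lVert\partial^\alpha\theta\rVert_\infty\,l_C^{-\al}$. The crucial geometric point is that on $\operatorname{supp}(\psi_C)\subseteq D_C$ the side length $l_C$ is comparable to $d(\,\cdot\,,A)$. Indeed, for $y_*\in D_C$ one checks $d(y_*,C)<\frac{\sqrt n}{4}l_C$, so combining $d(C,A)\ge 4\sqrt n\,l_C$ with Lemma \ref{l}(a) and (g) yields an upper bound $d(y_*,A)\le c_2\,l_C$ for all $C\notin K_0$ (the coarsest cubes $C\in K_0$ have $l_C=1$, and for these I invoke the hypothesis $d(y_*,A)<4\sqrt n$). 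Thus $l_C^{-1}\le c_2\,d(y_*,A)^{-1}$ on the support, and
\[\lvert\partial^\alpha\psi_C(y_*)\rvert\le\frac{M_\alpha}{d(y_*,A)^{\al}}\qquad\bigl(M_\alpha\coloneqq\lVert\partial^\alpha\theta\rVert_\infty\,c_2^{\al}\bigr)\]
for every $C\in W$ and every $y_*$ with $d(y_*,A)<4\sqrt n$.

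For $\Psi$ I exploit bounded overlap: if $y_*\in D_C$, then picking any cube $C_0\ni y_*$, Lemma \ref{l}(c) and (d) show that at most $c$ cubes $C$ satisfy $y_*\in\operatorname{supp}(\psi_C)$, whence $\lvert\partial^\alpha\Psi(y_*)\rvert\le c\,M_\alpha\,d(y_*,A)^{-\al}$. Since $\Psi\ge1$, applying the Fa\`a-di-Bruno formula (Satz \ref{d}) to the composition of $t\mapsto 1/t$ with $\Psi$ writes $\partial^\gamma(1/\Psi)$ as a sum of terms $\Psi^{-(m+1)}\prod_i\partial^{\beta_i}\Psi$ with $\sum_i\lvert\beta_i\rvert=\lvert\gamma\rvert$; each such term is $\le P_\gamma\,d(y_*,A)^{-\lvert\gamma\rvert}$, because the denominators $\Psi^{m+1}\ge1$ cause no loss. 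Finally the Leibnizformel gives
\[\partial^\alpha\varphi_C=\sum_{\beta\le\alpha}\binom{\alpha}{\beta}\,\partial^\beta\psi_C\cdot\partial^{\alpha-\beta}(1/\Psi),\]
and estimating each factor by the two previous displays bounds every summand by a constant multiple of $d(y_*,A)^{-\al}$. Summing over $\beta\le\alpha$ and taking $N_k$ to be a suitable constant (chosen slightly larger than the resulting bound, to secure the strict inequality) yields (b).

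The step I expect to be the main obstacle is the second paragraph: establishing the comparability $l_C\asymp d(\,\cdot\,,A)$ uniformly in $C$, which forces one to treat the coarsest scale $K_0$ separately and is precisely where the hypothesis $d(y_*,A)<4\sqrt n$ in (b) is consumed. The remainder is bookkeeping that combines the chain rule, the bounded overlap of Lemma \ref{l}, and the lower bound $\Psi\ge1$.
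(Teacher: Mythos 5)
Your proposal is correct, and while the construction of the partition of unity itself — rescaled bump functions $\psi_C$ normalised by $\Psi\coloneqq\sum_{C'\in W}\psi_{C'}$ — and hence part (a) coincide exactly with the paper's proof, your proof of the derivative estimate (b) takes a genuinely different route. The paper argues by self-similarity of the Whitney decomposition: it introduces an equivalence relation on $W$ (two cubes are equivalent when the configurations of cubes meeting them correspond under the affine map matching the two cubes), asserts that there are only finitely many equivalence classes, obtains a uniform bound for all $\vert\partial^{\beta}\varphi_C\vert$ on the compact union of finitely many representatives, and then transports this bound to an arbitrary point via the exact scaling identity $\p\varphi_{P_i}(x)=(l_Q/l_P)^{\al}\,\p\varphi_{Q_i}\big(y_Q+\tfrac{l_Q}{l_P}(x-y_P)\big)$ combined with Lemma \ref{l}\,(g). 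You instead estimate numerator and denominator of $\varphi_C=\psi_C/\Psi$ separately: the chain rule plus the comparability $d(y_*,A)\leq c_2\,l_C$ on $D_C$ (via Lemma \ref{l}\,(a)/(g) for $C\notin K_0$, and via the hypothesis $d(y_*,A)<4\sqrt{n}$ for $C\in K_0$, where $l_C=1$) controls $\partial^{\beta}\psi_C$; bounded overlap (Lemma \ref{l}\,(c),(d)) controls $\partial^{\beta}\Psi$, since cubes with $y_*\notin\text{supp}(\psi_C)$ contribute zero to every derivative at $y_*$; the bound $\Psi\geq 1$ together with Fa\`a di Bruno (Satz \ref{d}) controls $\partial^{\gamma}(1/\Psi)$, the blocks of each partition having orders summing exactly to $\vert\gamma\vert$ so that the product of the individual bounds is precisely a constant times $d(y_*,A)^{-\vert\gamma\vert}$; and the Leibnizformel assembles these into the bound on $\partial^{\alpha}\varphi_C$. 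What your approach buys: it replaces the paper's finiteness-of-equivalence-classes claim — which the paper states without proof and which is the one delicate combinatorial point of its argument — by explicit quantitative estimates relying only on Lemma \ref{l} and on tools (Leibniz, Fa\`a di Bruno) already established in Section 1. What the paper's approach buys: it never needs to differentiate $1/\Psi$ at all; compactness of the finitely many model configurations yields the uniform constant for free, and the dyadic rescaling does the rest.
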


\begin{proof}
Sei $\psi\colon\R^n\to\R$ eine glatte Funktion mit $\psi(\R^n)\subseteq [0,1]$, $\psi\vert_{\left[-\frac{1}{2},\frac{1}{2}\right]^n}=1$ und $\text{supp}(\psi)\subseteq \left]-\frac{3}{4},\frac{3}{4}\right[^n$.
Für jedes  $C\in W$ ist
\[\psi_C\colon\R^n\to\R,\quad \psi_C(x)\coloneqq\psi\left(\frac{x-y_C}{l_C}\right)\]
eine glatte Funktion mit $\psi_C(\R^n)\subseteq[0,1]$, $\psi_C\vert_C=1$ und $\text{supp}(\psi_C)\subseteq D_C.$
Da $(D_C)_{C\in W}$ eine lokalendliche offene Überdeckung von $\R^n\setminus A$ ist und ${\sum_{C\in W}\psi_{C}(x)>0}$ für alle $x\in\R^n\setminus A$, sehen wir, dass
\[\varphi_C\colon\R^n\setminus A\to\R,\quad \varphi_C(x)\coloneqq\frac{\psi_C(x)}{\sum_{C'\in W}\psi_{C'}(x)}\]
für jedes $C\in W$ sinnvoll definiert und glatt ist. Wir stellen fest, dass $(\varphi_C)_{C\in W}$ eine $C^{\infty}$-Partition der Eins ist, welche (a) erfüllt.

Zwei Würfel $P,Q\in W$ seien äquivalent, wenn die folgende Bedingung erfüllt ist: Sind $P_1,\ldots,P_l$ mit $l\in\N$ diejenigen Würfel in $W$, welche $C$ schneiden, dann sind \[y_{Q}+\frac{l_{Q}}{l_P}(P_1-y_P),\ldots, y_{Q}+\frac{l_{Q}}{l_P}(P_l-y_P)\]  genau diejenigen Würfel in $W$, die $Q$ schneiden. Dadurch wird eine Äquivalenzrelation auf $W$ mit einer endlichen Anzahl $m\in\N$ von Äquivalenzklassen definiert. Wir wählen dazugehörige Repräsentanten $C_1,\ldots, C_m\in W$.

Sei ein $k\in\N_0$ gegeben. Da die glatten Funktionen $\p\varphi_C$ mit $\al\leq k$ und $C\in W$ auf der kompakten Menge $C_1\cup\ldots \cup C_m$ beschränkt sind und nur endlich viele davon auf $C_1\cup\ldots \cup C_m$ nicht verschwinden, 
gibt es ein $N_k>0$ derart, dass
\begin{align}\label{gl.ff}\vert\p\varphi_C(x)\vert<\frac{N_k}{(14\sqrt{n})^k}\end{align}
für alle $\al\leq k$, $C\in W$ und $x\in C_1\cup\ldots\cup C_m$. 

Seien $\al\leq k$ und $y_*\in\R^n\setminus A$ mit $d(y_*,A)<4\sqrt{n}$ beliebig. Es gibt ein $P\in W\setminus K_0$ mit $y_*\in P$ und ein $Q\in\{C_1,\ldots,C_m\}$, welches zu $P$ äquivalent ist. Seien $P_1,\ldots, P_l$ diejenigen Würfel aus $W$, die $P$ schneiden, und $Q_i\coloneqq y_{Q}+\frac{l_{Q}}{l_{P}}(P_i-y_{P})$ für $i\in\{1,\ldots,l\}$.
Dann gilt
\[y_{Q_i}+\frac{l_{Q_i}}{l_{P_i}}(x-y_{P_i})=y_{Q}+\frac{l_{Q}}{l_{P}}(x-y_{P})\] 
für alle $x\in\R^n$, also
\[\psi_{P_i}(x)=\psi_{Q_i}\left(y_{Q}+\frac{l_{Q}}{l_{P}}(x-y_{P})\right).\]
Für alle $x\in P$ gilt
\begin{align*}
\sum_{C\in W}\psi_{C}(x)
=\sum_{j=1}^l\psi_{P_j}(x)
&=\sum_{j=1}^l\psi_{Q_j}\left(y_{Q}+\frac{l_{Q}}{l_{P}}(x-y_{P})\right)
=\sum_{C\in W}\psi_{C}\left(y_{Q}+\frac{l_{Q}}{l_{P}}(x-y_{P})\right)
\end{align*}
und somit
\[\varphi_{P_i}(x)=\varphi_{Q_i}\left(y_{Q}+\frac{l_{Q}}{l_{P}}(x-y_{P})\right),\]
so dass mit der Kettenregel
\[\p\varphi_{P_i}(x)=\left(\frac{l_{Q}}{l_{P}}\right)^{\al}\p\varphi_{Q_i}\left(y_{Q}+\frac{l_{Q}}{l_{P}}(x-y_{P})\right)\]
folgt. Mit \eqref{gl.ff} und Lemma \ref{l} (g) schließen wir
\begin{align*} \vert \p\varphi_{P_i}(y_*)\vert
&=\left(\frac{l_{Q}}{l_P}\right)^{\al}\left\vert\p\varphi_{Q_i}\left(y_{Q}+\frac{l_{Q}}{l_P}(y_*-y_{P})\right)\right\vert
<\left(\frac{1}{l_P}\right)^{\al}\frac{N_k}{(14\sqrt{n})^k}\\
&\leq\left(\frac{14\sqrt{n}}{d(y_*,A)}\right)^{\al}\frac{N_k}{(14\sqrt{n})^k}
\leq \frac{N_k}{d(y_*,A)^{\al}}.
\end{align*}
Zusammen mit $\p\varphi_{C}(y_*)=0$ für alle $C\in W\setminus\{P_1,\ldots, P_l\}$ folgt, dass (b) erfüllt ist.\end{proof}

\subsection*{Beweis des Whitneyschen Fortsetzungssatzes}
\addcontentsline{toc}{subsection}{\protect\numberline{}Beweis des Whitneyschen Fortsetzungssatzes}

Für den Beweis des Fortsetzungssatzes benötigen wir weitere Hilfsmittel.

\begin{lemma}[{vgl. \cite[Lemma 1]{Wh}}]\label{b}
Es seien $\gamma\colon I\to E$ eine stetige Kurve auf einem nichtentarteten Intervall $I\subseteq\R$, $A^*\subseteq\R$ eine abgeschlossene Teilmenge mit $A^*\subseteq I$, $t_0\in A^*$ und $v\in E$. Für alle stetigen Halbnormen $q$ auf $E$ und $\eps>0$ gebe es ein $\delta>0$, welches den beiden folgenden Bedingungen genügt:
\begin{itemize}
\item[(1)] Für alle $t\in A^*$ mit $0<\lvert t-t_0\rvert<\delta$ gilt \[q\left(\frac{\gamma(t)-\gamma(t_0)}{t-t_0}-v\right)<\eps.\]
\item[(2)] Für alle $t\in I\setminus A^*$ mit $\lvert t-t_0\rvert<\delta$ ist $\gamma$ in $t$ differenzierbar und es gilt \[q(\gamma'(t)-v)<\eps.\]
\end{itemize}
Dann  ist $\gamma$ in $t_0$ differenzierbar mit $\gamma'(t_0)=v$.
\end{lemma}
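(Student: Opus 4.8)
The plan is to verify the conclusion through condition~(b) of the definition of the limit (the defining property of differentiability): to a fixed continuous seminorm $q$ on $E$ and $\eps>0$ I must produce a $\delta>0$ such that $q\!\left(\frac{\gamma(t)-\gamma(t_0)}{t-t_0}-v\right)\le\eps$ for all $t\in I$ with $0<\lvert t-t_0\rvert<\delta$. As $\delta$ I take the radius furnished by the hypothesis for this $q$ and $\eps$, fix such a $t$, and assume without loss of generality $t>t_0$. If $t\in A^*$, then~(1) yields the estimate at once.

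Suppose now $t\in I\setminus A^*$. Since $A^*$ is closed and $t_0\in A^*\cap[t_0,t]$, the number $s:=\sup\left(A^*\cap[t_0,t]\right)$ lies in $A^*$ and satisfies $t_0\le s<t$ together with $(s,t]\cap A^*=\emptyset$; hence $(s,t]\subseteq I\setminus A^*$, and each $\tau\in(s,t]$ satisfies $0<\tau-t_0<\delta$, so that by~(2) the curve $\gamma$ is differentiable on $(s,t]$ with $q(\gamma'(\tau)-v)<\eps$. The decisive observation is the identity
\[
\frac{\gamma(t)-\gamma(t_0)}{t-t_0}
=\frac{s-t_0}{t-t_0}\cdot\frac{\gamma(s)-\gamma(t_0)}{s-t_0}
+\frac{t-s}{t-t_0}\cdot\frac{\gamma(t)-\gamma(s)}{t-s},
\]
which exhibits the difference quotient as a convex combination of $\frac{\gamma(s)-\gamma(t_0)}{s-t_0}$ and $\frac{\gamma(t)-\gamma(s)}{t-s}$ (for $s=t_0$ only the second summand remains). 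Subtracting $v$ and applying $q$, it suffices to bound each of the two terms $q\!\left(\frac{\gamma(s)-\gamma(t_0)}{s-t_0}-v\right)$ and $q\!\left(\frac{\gamma(t)-\gamma(s)}{t-s}-v\right)$ by $\eps$. The first follows, because of $0<s-t_0<\delta$, directly from~(1).

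For the second summand I would invoke a mean value inequality: setting $h(\tau):=\gamma(\tau)-\tau v$, the curve $h$ is continuous on $[s,t]$ and differentiable on $(s,t]$ with $q(h'(\tau))=q(\gamma'(\tau)-v)<\eps$, and I claim $q(h(t)-h(s))\le\eps\,(t-s)$, which is exactly $q\!\left(\frac{\gamma(t)-\gamma(s)}{t-s}-v\right)\le\eps$. This is the only step that genuinely exploits the structure of $E$ and the main obstacle, for on $(s,t]$ the curve $\gamma$ is assumed merely differentiable (not $C^1$), so that the integral form of the Mittelwertsatz is unavailable. Instead I would reduce to the scalar case: by Hahn--Banach one may write $q$ as $q(x)=\sup_{\lambda}\lvert\lambda(x)\rvert$, where the supremum ranges over all continuous linear functionals $\lambda\colon E\to\R$ with $\lvert\lambda(y)\rvert\le q(y)$ for all $y\in E$. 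For each such $\lambda$ the map $\lambda\circ h$ is real-valued, continuous on $[s,t]$ and differentiable on $(s,t)$ with $\lvert(\lambda\circ h)'(\tau)\rvert=\lvert\lambda(h'(\tau))\rvert\le q(h'(\tau))<\eps$; the classical mean value theorem yields $\lvert\lambda(h(t)-h(s))\rvert\le\eps\,(t-s)$, and taking the supremum over $\lambda$ gives the claim. Inserting the two estimates into the convex combination produces $q\!\left(\frac{\gamma(t)-\gamma(t_0)}{t-t_0}-v\right)\le\eps$; the mirror-image argument for $t<t_0$ (with $s:=\inf\left(A^*\cap[t,t_0]\right)$) then completes the proof that $\gamma$ is differentiable in $t_0$ with $\gamma'(t_0)=v$.
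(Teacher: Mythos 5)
Your proposal is correct and follows essentially the same route as the paper: split at $s=\sup(A^*\cap[t_0,t])$, write the difference quotient as a convex combination, control the piece over $[t_0,s]$ by (1), and reduce the piece over $[s,t]$ to the scalar mean value theorem via Hahn--Banach. The only cosmetic difference is that you use the representation $q(x)=\sup_\lambda\lvert\lambda(x)\rvert$ over all functionals dominated by $q$ (applied to $h(\tau)=\gamma(\tau)-\tau v$), whereas the paper picks a single norming functional for the vector $\frac{\gamma(t)-\gamma(s)}{t-s}-v$ and applies the mean value theorem to $\lambda\circ\gamma$; both arguments are equivalent.
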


\begin{proof}
Zu einer gegebenen stetigen Halbnorm $q$ auf $E$ und einem $\eps>0$ sei $\delta>0$ wie oben. Wir werden zeigen, dass \[q\left(\frac{\gamma(t)-\gamma(t_0)}{t-t_0}-v\right)<2\eps\] für alle $t\in I\setminus A^*$ mit $\lvert t-t_0\rvert<\delta$ gilt. Zusammen mit (1) folgt dann die Behauptung. Angenommen, wir haben ein solches $t$ gegeben mit $t>t_0$ (der Fall $t<t_0$ ist analog). Wir setzen $s\coloneqq\text{max}([t_0,t]\cap A^*)$. Nach dem Hahn-Banachschen Fortsetzungssatz
gibt es ein stetiges lineares Funktional $\lambda\colon E\to\R$ derart, dass \[\lambda\left(\frac{\gamma(t)-\gamma(s)}{t-s}-v\right)=q\left(\frac{\gamma(t)-\gamma(s)}{t-s}-v\right)\] und $\vert\lambda(x)\vert\leq q(x)$ für alle $x\in E$. 
Wegen $]s,t]\subseteq I\setminus A^*$ ist $\gamma$ nach (2) auf $]s,t]$ differenzierbar und damit auch $\lambda\circ\gamma$, wobei $(\lambda\circ\gamma\vert_{]s,t]})'=\lambda\circ(\gamma\vert_{]s,t]})'$ gilt. Gemäß dem reellen Mittelwertsatz gibt es ein $\xi\in]s,t[$ mit
\[\frac{\lambda(\gamma(t))-\lambda(\gamma(s))}{t-s}=(\lambda\circ\gamma)'(\xi)=\lambda(\gamma'(\xi)).\]
Daraus folgt
\begin{align*} q\left(\frac{\gamma(t)-\gamma(s)}{t-s}-v\right)&=\left\vert\lambda\left(\frac{\gamma(t)-\gamma(s)}{t-s}-v\right)\right\vert=\left\vert \frac{\lambda(\gamma(t))-\lambda(\gamma(s))}{t-s}-\lambda(v)\right\vert\\ &=\vert\lambda(\gamma'(\xi))-\lambda(v)\vert=\vert\lambda(\gamma'(\xi)-v)\vert\leq q(\gamma'(\xi)-v)<\eps.
\end{align*}
Im Falle $s=t_0$ sind wir fertig. Andernfalls gilt (1) zufolge \[q\left(\frac{\gamma(s)-\gamma(t_0)}{s-t_0}-v\right)<\eps\]
und somit
\begin{align*}
&q\left(\frac{\gamma(t)-\gamma(t_0)}{t-t_0}-v\right)\\
&=q\left(\frac{t-s}{t-t_0}\frac{\gamma(t)-\gamma(s)}{t-s}+\frac{s-t_0}{t-t_0}\frac{\gamma(s)-\gamma(t_0)}{s-t_0}-\frac{t-s+s-t_0}{t-t_0}v\right)\\ 
&\leq \frac{t-s}{t-t_0}q\left(\frac{\gamma(t)-\gamma(s)}{t-s}-v\right)+\frac{s-t_0}{t-t_0}q\left(\frac{\gamma(s)-\gamma(t_0)}{s-t_0}-v\right)<2\eps.
\end{align*}
\end{proof}

\begin{lemma}\label{n}
Es seien $A\subseteq\R^n$ eine abgeschlossene, nichtleere Teilmenge und $W$ wie im vorigen Abschnitt definiert. Ist $K\subseteq\R^n$ kompakt, dann sind die Mengen
\[K_A\coloneqq\overline{\{x\in A\colon\exists\, y\in K\text{ \ee{mit} }\Vert y-x\Vert=d(y,A)\}}\]
und
\[K^A\coloneqq\overline{\left\{x_C\colon C\in W,\ \exists\, C'\in W,\ y\in C'\cap K\text{ \ee{mit} } C\cap C'\neq\emptyset\text{ \ee{und} }d(y,A)<1.\right\}}\]
kompakt.
\end{lemma}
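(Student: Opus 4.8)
Der Plan ist, beide Aussagen mit dem Satz von Heine–Borel auf Beschränktheit zurückzuführen: Da $K_A$ und $K^A$ als Abschlüsse abgeschlossen sind, genügt es zu zeigen, dass die jeweils unter dem Abschluss stehenden Mengen beschränkt sind. Dazu würde ich ein $R>0$ mit $K\subseteq\overline{B}_R(0)$ und einen Punkt $a_0\in A$ fixieren. Für $K_A$ argumentiere ich wie folgt: Ist $x\in A$ mit $\Vert y-x\Vert=d(y,A)$ für ein $y\in K$, so gilt $d(y,A)\leq\Vert y-a_0\Vert\leq R+\Vert a_0\Vert$ und damit $\Vert x\Vert\leq\Vert y\Vert+\Vert y-x\Vert\leq 2R+\Vert a_0\Vert$. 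Die fragliche Menge ist also beschränkt, und $K_A$ ist kompakt.

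Für $K^A$ würde ich zuerst die Größe der beteiligten Würfel kontrollieren. Seien $C\in W$ sowie $C'\in W$ und $y\in C'\cap K$ mit $C\cap C'\neq\emptyset$ und $d(y,A)<1$. Ist $C'\in K_j$, so erzwingt die in die Definition von $K_j$ eingebaute Ungleichung $d(C',A)\geq 4\sqrt{n}/2^j$ zusammen mit $d(C',A)\leq d(y,A)<1$, dass $2^j>4\sqrt{n}$, also $\text{diam}(C')=\sqrt{n}/2^j<1/4$. Da $C$ an $C'$ grenzt, liefert Lemma \ref{l} (b) die Abschätzung $\text{diam}(C)\leq 2\,\text{diam}(C')<1/2$. Über die Würfelmittelpunkte erhalte ich dann $\Vert y-y_C\Vert\leq\text{diam}(C')+\tfrac{1}{2}\text{diam}(C)<1/2$ und somit $\Vert y_C\Vert\leq\Vert y\Vert+\Vert y-y_C\Vert<R+1/2$. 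Wähle ich ferner einen zu $y$ nächstgelegenen Punkt $a^*\in A$, so folgt $d(y_C,A)\leq\Vert y_C-a^*\Vert\leq\Vert y_C-y\Vert+d(y,A)<3/2$; wegen $\Vert y_C-x_C\Vert=d(y_C,A)$ ergibt sich $\Vert x_C\Vert\leq\Vert y_C\Vert+\Vert y_C-x_C\Vert<R+2$. Damit ist auch die unter dem zweiten Abschluss stehende Menge beschränkt und $K^A$ kompakt.

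Die einzige heikle Stelle erwarte ich bei $K^A$: Man muss bemerken, dass die Voraussetzung $d(y,A)<1$ gemeinsam mit der unteren Schranke $d(C',A)\geq 4\sqrt{n}/2^j$ aus der Konstruktion von $K_j$ die Seitenlängen der relevanten Würfel gleichmäßig nach oben beschränkt. Sind die Würfel erst einmal als klein erkannt, so überträgt Lemma \ref{l} (b) diese Kleinheit auf den Nachbarwürfel $C$, und die restlichen Abschätzungen für $\Vert y_C\Vert$ sowie $d(y_C,A)=\Vert y_C-x_C\Vert$ sind bloße Dreiecksungleichungen.
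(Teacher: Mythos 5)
Dein Beweis ist korrekt, und im ersten Teil ($K_A$) stimmt er im Wesentlichen mit dem Beweis der Arbeit überein: Beschränktheit der unter dem Abschluss stehenden Menge per Dreiecksungleichung, dann Heine--Borel. Beim zweiten Teil ($K^A$) gehst du jedoch einen genuin anderen Weg. Die Arbeit wendet zweimal Lemma \ref{l} (f) an -- einmal mit $x_0=x_C$ auf die Punkte $y\in C'$ und $y_C\in C$, einmal mit den Abständen zu $A$ -- und erhält so in zwei Zeilen $\Vert y-x_C\Vert<2\Vert y_C-x_C\Vert=2d(y_C,A)<4d(y,A)<4$, also $\Vert x_C\Vert\leq\Vert y\Vert+\Vert y-x_C\Vert\leq M+4$ mit $M\coloneqq\sup\{\Vert x\Vert\colon x\in K\}$. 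Du verzichtest ganz auf Teil (f) und gewinnst die nötige Kontrolle elementar aus der Konstruktion von $W$: Aus $4\sqrt{n}/2^j\leq d(C',A)\leq d(y,A)<1$ folgt die gleichmäßige Kleinheit von $C'$, Lemma \ref{l} (b) überträgt sie auf den Nachbarwürfel $C$, und Dreiecksungleichungen über den Mittelpunkt $y_C$ liefern $\Vert x_C\Vert<R+2$. Beide Argumente sind vollständig und quantitativ; das der Arbeit ist kürzer, weil Lemma \ref{l} (f) genau für solche Vergleiche benachbarter Würfel zugeschnitten ist, während dein Argument den geometrischen Mechanismus (Würfel nahe $A$ sind klein) explizit macht und nur Teil (b) des Hilfslemmas sowie die definierende Ungleichung der $K_j$ benötigt -- es wäre also auch dann noch tragfähig, wenn man (f) nicht zur Verfügung hätte. (Der Randfall $C=C'$ ist in deiner Berufung auf (b) stillschweigend enthalten, da dann trivialerweise $\mathrm{diam}(C)=\mathrm{diam}(C')$ gilt.)
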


\begin{proof}
Wir zeigen, dass die Mengen, deren Abschluss wir hier betrachten, beschränkt sind. Sei $M\coloneqq\sup\{\Vert x\Vert\colon x\in K\}$. Wenn es für ein $x\in A$ ein $y\in K$ mit $\Vert y-x\Vert=d(y,A)$ gibt, dann gilt
\[\Vert x\Vert\leq\Vert y\Vert+\Vert y-x\Vert\leq M+d(y,A)\leq M+d(K,A)+\text{diam}(K).\]
Angenommen, für ein $C\in W$ existieren $C'\in W$ und $y\in C'\cap K$ derart, dass $C\cap C'\neq\emptyset$ und $d(y,A)<1$. Mit zweimaligem Anwenden von Lemma \ref{l} (f) gilt $\Vert y-x_C\Vert< 2\Vert y_C-x_C\Vert<4d(y,A)<4$, also
\[\Vert x_C\Vert\leq\Vert y\Vert+\Vert y-x_C\Vert\leq M+4.\]
\end{proof}

\begin{lemma}\label{rr}
Jede lokalkonvexe Teilmenge $U\subseteq\R^n$ mit dichtem Inneren ist $C^{\infty}$-parakompakt, d.h. für jede Überdeckung $(U_j)_{j\in J}$ von $U$ mit relativ offenen Teilmengen $U_j\subseteq U$ existiert eine $C^{\infty}$-Partition der Eins $(h_j)_{j\in J}$ auf $U$ mit $\text{supp}(h_j)\subseteq U_j$ für $j\in J$.
\end{lemma}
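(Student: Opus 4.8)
The plan is to reduce the statement to the classical fact that open subsets of $\R^n$ are smoothly paracompact, and then to pull the resulting partition of unity back to $U$ by restriction. First I would write each relatively open set as $U_j=U\cap V_j$ with $V_j\subseteq\R^n$ open, and set $V\coloneqq\bigcup_{j\in J}V_j$. Then $V$ is an open subset of $\R^n$ containing $U$, and $(V_j)_{j\in J}$ is an open cover of $V$ indexed by the same set $J$.

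As a second abzählbarer, lokalkompakter Hausdorffraum ist $V$ parakompakt, und da $\R^n$ glatte Abschneidefunktionen besitzt, ist $V$ im klassischen Sinne $C^{\infty}$-parakompakt. Folglich existiert eine glatte Partition der Eins $(g_j)_{j\in J}$ auf $V$, welche $(V_j)_{j\in J}$ untergeordnet ist, d.h.\ $g_j\geq 0$, $\text{supp}(g_j)\subseteq V_j$ für alle $j\in J$, die Familie $(\text{supp}(g_j))_{j\in J}$ ist lokalendlich in $V$, und $\sum_{j\in J}g_j\equiv 1$ auf $V$.

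Sodann würde ich $h_j\coloneqq g_j\vert_U$ für $j\in J$ setzen. Da $U\subseteq V$ beide lokalkonvexe Teilmengen von $\R^n$ mit dichtem Inneren sind, liefert Lemma \ref{pp} (mit $s=t=n$), dass die Einschränkung $C^{\infty}(V,\R)$ nach $C^{\infty}(U,\R)$ abbildet; insbesondere ist jedes $h_j$ eine $C^{\infty}$-Funktion auf $U$. Es verbleibt, die definierenden Eigenschaften einer untergeordneten Partition der Eins auf $U$ zu prüfen. Die Trägerbedingung folgt, weil $\text{supp}(h_j)$, der Abschluss von $\{x\in U\colon g_j(x)\neq 0\}$ in $U$, in $\text{supp}(g_j)\cap U\subseteq V_j\cap U=U_j$ enthalten ist. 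Die Lokalendlichkeit überträgt sich von $V$ auf $U$: Jedes $x\in U$ besitzt eine $V$-Umgebung, die nur endlich viele der $\text{supp}(g_j)$ trifft, und deren Schnitt mit $U$ ist eine $U$-Umgebung, die nur endlich viele $\text{supp}(h_j)$ trifft. Schließlich gilt $\sum_{j\in J}h_j(x)=\sum_{j\in J}g_j(x)=1$ für jedes $x\in U\subseteq V$.

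Der einzige wirklich heikle Punkt ist die Nicht-Offenheit von $U$: A priori sind $C^{\infty}$-Funktionen auf $U$ über Richtungsableitungen auf dem Inneren nebst stetiger Fortsetzung zum Rand definiert, weshalb man sich vergewissern muss, dass die Einschränkung einer global glatten Funktion auf $V$ tatsächlich in $C^{\infty}(U,\R)$ landet. Genau dies leistet Lemma \ref{pp}, sodass kein gesondertes Argument nötig ist; alles Übrige ist Buchführung über Träger und Lokalendlichkeit.
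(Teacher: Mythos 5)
Ihr Beweis ist korrekt und folgt im Wesentlichen demselben Weg wie die Arbeit: Dort wird Lemma \ref{rr} über Beispiel \ref{nn} (b) auf den Satz über topologische Einbettungen zurückgeführt, dessen Beweis -- angewandt auf die Inklusion $U\hookrightarrow\R^n$ -- genau Ihre Konstruktion ist (offene Mengen $V_j$ mit $U_j=U\cap V_j$, glatte Partition der Eins auf $V=\bigcup_{j\in J}V_j$, Einschränkung auf $U$). Auch der von Ihnen als heikel markierte Punkt, dass die Einschränkung einer glatten Funktion auf die nicht offene Menge $U$ wieder glatt ist, wird dort ebenso durch die Komposition mit einer $C^k$-Funktion (bzw. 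Lemma \ref{pp}) erledigt.
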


\begin{proof}
Siehe Beispiel \ref{nn} (b).
\end{proof}

\begin{lemma}\label{ii}
Es seien $k\in\N_0\cup\{\infty\}$ und $U\subseteq\R^n$ eine lokalkonvexe Teilmenge mit dichtem Inneren.
\begin{itemize}
\item[(a)] Für jedes $g\in C^k(U,\R)$ ist die lineare Abbildung
\[\lambda_g\colon C^k(U,E)\to C^k(U,E),\quad f\mapsto g\cdot f\]
stetig.
\item[(b)] Sei $(W_j)_{j\in J}$ eine lokalendliche Familie von Teilmengen $W_j\subseteq U$. Wir schreiben $C^k_{W_j}(U,E)$ für den Untervektorraum aller $f\in C^k(U,E)$ mit $\text{\ee{supp}}(f)\subseteq W_j$. Dann ist die lineare Abbildung
\begin{align*}\prod_{j\in J}C^k_{W_j}(U,E)\to C^k(U,E),\quad (f_j)_{j\in J}\mapsto \sum_{j\in J}f_j\end{align*}
mit $\big(\sum_{j\in J}f_j\big)(x)\coloneqq\sum_{j\in J}f_j(x)$ für $x\in U$ stetig.
\end{itemize}
\end{lemma}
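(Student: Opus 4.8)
The plan for (a) is to realise $\lambda_g$ via the Leibniz formula. Scalar multiplication $\phi\colon\R\times E\to E,\ (s,v)\mapsto sv$ is continuous and bilinear, so applying the Leibniz formula to the pair $(g,f)$ shows $g\cdot f=\phi(g,f)\in C^k(U,E)$ with
\[\p(g\cdot f)=\sum_{\beta\leq\alpha}\binom{\alpha}{\beta}(\partial^{\beta}g)\cdot(\partial^{\alpha-\beta}f)\qquad(\al\leq k).\]
For continuity I fix $l\in\N_0$ with $l\leq k$, a stetige Halbnorm $q$ auf $E$ and a kompakte Teilmenge $K\subseteq U$, and estimate, using $q(sv)=\lvert s\rvert q(v)$,
\[q(\p(g\cdot f)(x))\leq\sum_{\beta\leq\alpha}\binom{\alpha}{\beta}\lvert\partial^{\beta}g(x)\rvert\,q(\partial^{\alpha-\beta}f(x))\leq C\Vert f\Vert_{C^l,q,K}\]
for all $\al\leq l$ and $x\in K$, where $C\coloneqq\max_{\al\leq l}\sum_{\beta\leq\alpha}\binom{\alpha}{\beta}\sup_{x\in K}\lvert\partial^{\beta}g(x)\rvert$ is finite because each $\partial^{\beta}g$ is stetig and $K$ kompakt. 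Taking the supremum over $x\in K$ and the maximum over $\al\leq l$ yields $\Vert\lambda_g(f)\Vert_{C^l,q,K}\leq C\Vert f\Vert_{C^l,q,K}$, which is the asserted continuity.

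For (b) I first check well-definedness. Since $(W_j)_{j\in J}$ ist lokalendlich and $\text{supp}(f_j)\subseteq W_j$, every point of $U$ has a relatively offene Umgebung $\Omega$ meeting only finitely many $W_j$; any $f_j$ whose $W_j$ misses $\Omega$ vanishes on $\Omega$, so there $\sum_j f_j$ agrees with a finite sum of $C^k$-Funktionen. As being $C^k$ can be verified locally through the characterization by partial derivatives in Satz \ref{xx}, it follows that $\sum_j f_j\in C^k(U,E)$ and that $\p\big(\sum_j f_j\big)=\sum_j\p f_j$ as a locally finite sum. For continuity I again fix $l,q,K$. By local finiteness and compactness of $K$ the index set $J_K\coloneqq\{j\in J\colon W_j\cap K\neq\emptyset\}$ is finite. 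For $j\notin J_K$ we have $K\subseteq U\setminus W_j\subseteq U\setminus\text{supp}(f_j)$, which is offen, hence $\p f_j\vert_K=0$. Therefore, for $x\in K$,
\[q\Big(\p\Big(\sum_{j}f_j\Big)(x)\Big)=q\Big(\sum_{j\in J_K}\p f_j(x)\Big)\leq\sum_{j\in J_K}\Vert f_j\Vert_{C^l,q,K},\]
so that $\big\Vert\sum_j f_j\big\Vert_{C^l,q,K}\leq\sum_{j\in J_K}\Vert f_j\Vert_{C^l,q,K}$. Since $J_K$ is finite, the right-hand side is a stetige Halbnorm on the product $\prod_{j\in J}C^k_{W_j}(U,E)$ (depending on finitely many coordinates, each through the restriction of $\Vert\cdot\Vert_{C^l,q,K}$), whence the map is stetig.

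The only genuinely delicate point lies in (b): because the domain carries the \emph{product} topology over a possibly infinite index set, I must ensure that for each target seminorm only finitely many coordinates enter the bounding seminorm, so that the latter is continuous on $\prod_{j\in J}C^k_{W_j}(U,E)$. This is exactly what the passage from $J$ to the endliche Menge $J_K$ achieves, and it rests on the compactness of $K$ together with local finiteness; everything else is routine. The subsidiary claim $\p f_j\vert_K=0$ for $j\notin J_K$ is immediate from $\text{supp}(f_j)\subseteq W_j$ being disjoint from $K$, since all derivatives of $f_j$ vanish on the open complement of its support.
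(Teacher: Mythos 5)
Dein Beweis ist korrekt und stimmt im Kern mit dem der Arbeit überein: Das Paper beweist Lemma \ref{ii} durch Verweis auf die allgemeinere Mannigfaltigkeits-Version (Lemma \ref{y} (b) und (d)), deren Beweise genau aus deiner Leibniz-Abschätzung $\Vert g\cdot f\Vert_{C^l,q,K}\leq C\Vert f\Vert_{C^l,q,K}$ bzw. der Reduktion auf die endliche Indexmenge $J_K$ bestehen. Deine zusätzliche Verifikation, dass die lokalendliche Summe tatsächlich eine $C^k$-Funktion ist, lässt das Paper implizit; sie ist ein sauberer Zusatz, kein Unterschied im Ansatz.
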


\begin{proof}
Wir verweisen auf Lemma \ref{y} (b) und (d); dort werden allgemeinere Aussagen bewiesen.
\end{proof}

Der Beweis des Whitneyschen Fortsetzungssatzes für vektorwertige Funktionen basiert auf Whitneys Beweis \cite[\S\ 11]{Wh} für den reellwertigen Fall.

\begin{proof}[Beweis von Satz \ref{k}]
Sei $k\in\N_0$. Zunächst zeigen wir die Aussage für den Spezialfall $U=\R^n$, wobei der Fall $A=\emptyset$ trivial ist.  
Sei nun $A\subseteq\R^n$ eine abgeschlossene, nichtleere Teilmenge, und sei $(\varphi_C)_{C\in W}$ eine $C^{\infty}$-Partition der Eins auf $\R^n\setminus A$ wie in Lemma \ref{j}.
Für jedes $f=(f_{\alpha})_{\al\leq k}\in\EE^k(A,E)$ definiert man die Funktion
\begin{align}\label{gl.h}\Phi_{\R^n,A}^k(f)\colon \R^n\to E,\quad x\mapsto\begin{cases} \sum_{C\in W}\varphi_C(x)T_{x_C}^k f(x) &\text{wenn }x\in \R^n\setminus A; \\ f_0(x) &\text{wenn }x\in A.\end{cases}\end{align}

\ee{Erster Schritt: Für jedes $f=(f_{\alpha})_{\al\leq k}\in\EE^k(A,E)$ ist $F\coloneqq\Phi_{\R^n,A}^k(f)$ eine $C^k$-Funktion mit $\rho_{A,\R^n}^k(F)=f$. Somit ist die Abbildung
\[\Phi_{\R^n,A}^k\colon\EE^k(A,E)\to C^k(\R^n,E),\quad f\mapsto \Phi_{\R^n,A}^k(f)\]
rechtsinvers zu $\rho_{A,\R^n}^k$.}

Für alle $\al\leq k$ ist zu zeigen, dass $\p F(x)$ für alle $x\in\R^n$ existiert und $\p F$ stetig ist mit $(\p F)\vert_A=f_{\alpha}$. Zunächst stellen wir fest, dass die Funktion $F\vert_{\R^n\setminus A}$ glatt ist, denn sowohl $\varphi_C$ als auch $T_{x_C}^k f$ sind für alle $C\in W$ glatt.

\begin{samepage}
\textit{Behauptung: Für alle $\al\leq k$, $x_0\in\partial A$, stetigen Halbnormen $q$ auf $E$ und $\eps>0$ gibt es ein $\delta>0$ derart, dass
\[q(\partial^{\alpha}F(y)-f_{\alpha}(x_0))<\eps\]
für alle $y\in\R^n\setminus A$ mit $\Vert y-x_0\Vert<\delta.$}
\end{samepage}

Wenn die Behauptung nachgewiesen ist, kann man den ersten Beweisschritt per Induktion nach $\al$ wie folgt beenden. Sei $\alpha=0$. Wegen der Stetigkeit von $F\vert_A=f_0$ gibt es für jedes $x_0\in\partial A$ und für alle stetigen Halbnormen $q$ auf $E$ und $\eps>0$ ein $\delta>0$ derart, dass
\[q(F(x)-f_0(x_0))<\eps\]
für alle $x\in A$ mit $\Vert x-x_0\Vert<\delta$. Mit der Behauptung können wir nach Verkleinern von $\delta$ annehmen, dass diese Abschätzung auch für alle $x\in\R^n\setminus A$ mit $\Vert x-x_0\Vert<\delta$ gilt. Somit ist $F$ stetig in $x_0$. Ebenso ist $F$ in allen $x\in\R^n\setminus\partial A$ stetig. Nehmen wir nun für ein $\al<k$ an, dass $\p F(x)$ für alle $x\in\R^n$ existiert und $\p F$ stetig ist mit $(\p F)\vert_A=f_{\alpha}$. Sei $j\in\{1,\ldots,n\}.$ Nach Lemma \ref{i} (b) ist $F\vert_{A^{\circ}}=f_0\vert_{A^{\circ}}$ eine $C^k$-Funktion mit $\partial^{\alpha+e_j}F(x)=f_{\alpha+e_j}(x)$ für alle $x\in A^{\circ}$. Sei $x_0=(x_{0,1},\ldots,x_{0,n})\in\partial A$. Wir betrachten die abgeschlossene Teilmenge
\[A^*\coloneqq\{t\in\R\colon (x_{0,1},\ldots, x_{0,j-1},t,x_{0,j+1},\ldots, x_{0,n})\in A\}\subseteq\R\]
sowie die stetige, auf $\R\setminus A^*$ differenzierbare Funktion
\[\gamma\colon\R\to E,\quad \gamma(t)\coloneqq \p F(x_{0,1},\ldots,t,\ldots, x_{0,n})\]
und setzen $t_0\coloneqq x_{0,j}\in A^*$ und $v\coloneqq f_{\alpha+e_j}(x_0)$.
Für alle stetigen Halbnormen $q$ auf $E$ und $\eps>0$ liefert die Behauptung ein $\delta>0$ derart, dass
\[q(\gamma'(t)-v)=q(\partial^{\alpha+e_j}F(x_{0,1},\ldots,t,\ldots, x_{0,n})-f_{\alpha+e_j}(x_0))<\eps\]
für alle $t\in\R\setminus A^*$ mit $\vert t-t_0\vert<\delta$. Mit Lemma \ref{i} (a) können wir nach Verkleinern von $\delta$ annehmen, dass
\[q\left(\frac{\gamma(t)-\gamma(t_0)}{t-t_0}-v\right)=q\left(\frac{f_{\alpha}(x_{0,1},\ldots,t,\ldots, x_{0,n})-f_{\alpha}(x_0)}{t-t_0}-f_{\alpha+e_j}(x_0)\right)<\eps\]
für alle $t\in A^*$ mit $0<\vert t-t_0\vert<\delta$.
Mit Lemma \ref{b} folgt, dass $\gamma'(t_0)=\partial^{\alpha+e_j}F(x_0)$ existiert und mit $v=f_{\alpha+e_j}(x_0)$ übereinstimmt. Somit ist gezeigt, dass $\partial^{\alpha+e_j}F(x)$ für alle $x\in\R^n$ existiert und $(\partial^{\alpha+e_j}F)\vert_A=f_{\alpha+e_j}$ gilt. Analog wie im Fall $\alpha=0$ folgert man aus der Behauptung und der Stetigkeit von $f_{\alpha+e_j}$, dass $\partial^{\alpha+e_j}F$ stetig ist, was den Induktionsbeweis beendet.

Zum Beweis der Behauptung seien $\al\leq k$, $x_0\in\partial A$, $q$ eine stetige Halbnorm auf $E$ und $\eps>0$. Zudem wählen wir ein
\[0<\eta<\min\left\{\frac{\eps}{c((k+2)!)^n4^{k+1}N_k},\ \frac{\eps}{4}\right\}\]
mit $c$ wie in Lemma \ref{l} (d) und $N_k$ wie in Lemma \ref{j} sowie ein
\[M>\max\{q(f_{\beta}(x))\colon \bet\leq k,\ x\in A\cap\overline{B}_1(x_0)\}.\]
Da $f$ ein Whitneyscher $k$-Jet ist, können wir ein
\[0<\delta<\min\left\{\frac{\eps}{8(k+1)^n M},\ \frac{1}{4}\right\}\]
so klein wählen, dass 
\begin{align}\label{gl.g}q(R_{x'}^{k-\bet}\partial^{\beta} f(x))\leq\Vert x-x'\Vert^{k-\bet}\eta\end{align}
für alle $\bet\leq k$ und $x,x'\in A\cap \overline{B}_1(x_0)$ mit $\Vert x-x'\Vert<4\delta$.
Sei $y_*\in\R^n\setminus A$ mit $\lVert y_*-x_0\rVert <\delta$ beliebig. Wir wählen ein $x_*\in A$ mit $\delta_*\coloneqq d(y_*,A)=\Vert y_*-x_*\Vert.$ Es gelten $\delta_*<\delta$ und $\lVert x_*-x_0\rVert<2\delta$.
Betrachten wir die Abschätzung 
\begin{align}\label{gl.d}
q(\p F(y_*)-f_{\alpha}(x_0))&\leq q(\p F(y_*)-T_{x_*}^{k-\al}\p f(y_*))+q(T_{x_*}^{k-\al}\p f(y_*)-f_{\alpha}(x_*))\notag\\
&+q(f_{\alpha}(x_*)-T_{x_0}^{k-\al}\p f(x_*))+q(T_{x_0}^{k-\al}\p f(x_*)-f_{\alpha}(x_0)).
\end{align}
Nach Definition des Taylorpolynoms ist
\[T_{x_*}^{k-\al}\p f(y_*)-f_{\alpha}(x_*)=\sum_{0<\vert\beta\vert\leq k-\al}\frac{(y_*-x_*)^{\beta}}{\beta!}f_{\alpha+\beta}(x_*).\]
Mit $\vert (y_*-x_*)^{\beta}\vert \leq\Vert y_*-x_*\Vert^{\bet}=\delta_*^{\bet}\leq \delta$ für alle $\bet>0$ folgt
\[q(T_{x_*}^{k-\al}\p f(y_*)-f_{\alpha}(x_*))\leq\sum_{0<\vert\beta\vert\leq k-\al}\vert(y_*-x_*)^{\beta}\vert q(f_{\alpha+\beta}(x_*))\leq(k+1)^n\delta M<\frac{\eps}{4},\]
und mit  $\vert (x_*-x_0)^{\beta}\vert\leq \Vert x_*-x_0\Vert^{\bet}\leq (2\delta)^{\bet}\leq 2\delta$ für alle $\bet>0$ 
erhält man analog
\[q(T_{x_0}^{k-\al}\p f(x_*)-f_{\alpha}(x_0))\leq \sum_{0<\vert\beta\vert\leq k-\al}\vert (x_*-x_0)^{\beta}\vert q(f_{\alpha+\beta}(x_0))
\leq(k+1)^n 2\delta M<\frac{\eps}{4}.\]
Wegen $f_{\alpha}(x_*)-T_{x_0}^{k-\al}\p f(x_*)=R_{x_0}^{k-\al}\p f(x_*)$ und \eqref{gl.g} gilt weiter
\[q(f_{\alpha}(x_*)-T_{x_0}^{k-\al}\p f(x_*))\leq\Vert x_*-x_0\Vert^{k-\al}\eta\leq\eta<\frac{\eps}{4}.\]
Seien $C\in W$ mit $y_*\in C$ und $C_1,\ldots,C_l$ diejenigen Würfel in $W$, die $C$ schneiden, wobei $l\leq c$ gilt. Wegen $\sum_{j=1}^l\varphi_{C_j}(x)=1$ gilt
\[F(x)=T_{x_*}^{k}f(x)+\sum_{j=1}^l\varphi_{C_j}(x)(T_{x_{C_j}}^{k}f(x)-T_{x_*}^{k}f(x))\]
für alle $x\in C$. Mit der Leibnizformel und Lemma \ref{c} (a) folgt
\begin{align}\label{gl.hh}\p F(y_*)&=T_{x_*}^{k-\al}\p f(y_*)\notag\\
&+\sum_{j=1}^l\sum_{\beta\leq\alpha}\binom{\alpha}{\beta}\partial^{\beta}\varphi_{C_j}(y_*)(T_{x_{C_j}}^{k-\al+\bet}\partial^{\alpha-\beta}f(y_*)-T_{x_*}^{k-\al+\bet}\partial^{\alpha-\beta}f(y_*)).\end{align}
Für alle $j\in\{1,\ldots,l\}$ und $\beta\leq\alpha$ gilt nach Lemma \ref{c} (c) aber
\begin{align}\label{gl.gg}&T_{x_{C_j}}^{k-\al+\bet}\partial^{\alpha-\beta}f(y_*)-T_{x_*}^{k-\al+\bet}\partial^{\alpha-\beta}f(y_*)\notag\\
&=\sum_{\vert\gamma\vert\leq k-\al+\bet}\frac{(y_*-x_{C_j})^{\gamma}}{\gamma!}R_{x_*}^{k-\al+\bet-\vert\gamma\vert} \partial^{\alpha-\beta+\gamma}f(x_{C_j}).\end{align}
Es gilt $\Vert x_{C_j}-x_*\Vert\leq \Vert x_{C_j}-y_{C_j}\Vert+\Vert y_{C_j}-x_*\Vert<4\delta_*< 4\delta$ mit Lemma \ref{l} (f). Ebenso wie $x_*$ ist auch $x_{C_j}$ in $\overline{B}_1(x_0)$, denn $\Vert x_{C_j}-x_0\Vert\leq \Vert x_{C_j}-y_{C_j}\Vert +\Vert y_{C_j}-x_0\Vert<4\delta<1$. Mit \eqref{gl.g} folgt daher
\[q(R_{x_*}^{k-\al+\bet-\vert\gamma\vert}\partial^{\alpha-\beta+\gamma}f(x_{C_j}))\leq \Vert x_{C_j}-x_*\Vert^{k-\al+\vert\beta\vert-\vert\gamma\vert}\eta\leq (4\delta_*)^{k-\al+\vert\beta\vert-\vert\gamma\vert}\eta\]
für alle $\vert\gamma\vert\leq k-\al+\bet$. Weiter gilt $\vert(y_*-x_{C_j})^{\gamma}\vert\leq\Vert y_*-x_{C_j}\Vert^{\vert\gamma\vert}\leq (2\Vert y_{C_j}-x_{C_j}\Vert)^{\vert\gamma\vert}\leq (4\delta_*)^{\vert\gamma\vert}.$
Dies ergibt
\begin{align*}&q(T_{x_{C_j}}^{k-\al+\bet}\partial^{\alpha-\beta}f(y_*)-T_{x_*}^{k-\al+\bet}\partial^{\alpha-\beta}f(y_*))\\
&\leq\sum_{\vert\gamma\vert\leq k-\al+\bet}\vert(y_*-x_{C_j})^{\gamma}\vert q(R_{x_*}^{k-\al+\bet-\vert\gamma\vert}\partial^{\alpha-\beta+\gamma}f(x_{C_j}))\\ 
&\leq (k+1)^n(4\delta_*)^{k-\al+\bet}\eta\leq (k+1)^n4^k\delta_*^{\bet}\eta.\end{align*}
Zusammen mit $\binom{\alpha}{\beta}\leq\alpha!\leq (k!)^n$ und $\vert\partial^{\beta}\varphi_{C_j}(y_*)\vert<\frac{N_k}{\delta_*^{\bet}}$ schließen wir
\begin{align*}&q(\partial^{\alpha}F(y_*)-T_{x_*}^{k-\al}\p f(y_*))\\
&\leq\sum_{j=1}^l\sum_{\beta\leq\alpha}\binom{\alpha}{\beta}\vert\partial^{\beta}\varphi_{C_j}(y_*)\vert q(T_{x_{C_j}}^{k-\al+\bet}\partial^{\alpha-\beta}f(y_*)-T_{x_*}^{k-\al+\bet}\partial^{\alpha-\beta}f(y_*))\\ 
&\leq c((k+2)!)^n 4^k N_k\eta<\frac{\eps}{4}.
\end{align*}
Mit $\eqref{gl.d}$ folgt $q(\partial^{\alpha}F(y_*)-f_{\alpha}(x_0))<\eps$, was zu zeigen war.

\ee{Zweiter Schritt: Die Abbildung $\Phi_{\R^n,A}^k$ ist stetig und linear}.

Die Linearität von $\Phi_{\R^n,A}^k$ folgt aus Lemma \ref{c} (b). Zum Beweis der Stetigkeit seien $q$ eine stetige Halbnorm auf $E$, $K\subseteq \R^n$ kompakt, $f=(f_{\alpha})_{\al\leq k}\in\E$ und $F\coloneqq\Phi_{\R^n,A}^k(f)$. Zudem seien $\al\leq k$ und $y_*\in K$. Falls $y_*\in K\cap A$ gilt, ist $\partial^{\alpha}F(y_*)=f_{\alpha}(y_*)$, also
\[q(\partial^{\alpha}F(y_*))\leq\Vert f\Vert_{\mathcal{E}^k,q,K\cap A}=: Q_1(f).\]
Angenommen, $y_*$ liegt in $K\setminus A$ mit $\delta_*\coloneqq d(y_*,A)<1$. Seien $x_*\in A$ mit $\Vert y_*-x_*\Vert=\delta_*$ und $C\in W$ mit $y_*\in C$. Weiter seien $C_1,\ldots,C_l$ diejenigen Würfel in $W$, die $C$ schneiden, und $K_A$ sowie $K^A$ kompakte Mengen wie in Lemma \ref{n}. Zum einen gilt
\begin{align*}
q(T_{x_*}^{k-\al}\p f(y_*))\leq \sum_{\bet\leq k-\al}\underbrace{\vert(y_*-x_*)^{\beta}\vert}_{\leq\delta_*^{\bet}\leq 1}\underbrace{q(f_{\alpha+\beta}(x_*))}_{\leq \Vert f\Vert'_{\EE^k,q,K_A}}\leq (k+1)^n\Vert f\Vert_{\EE^k,q,K_A}.
\end{align*}
Zum anderen gilt nach \eqref{gl.gg} für alle $j\in\{1,\ldots,l\}$ mit $x_{C_j}\neq x_*$ und $\beta\leq\alpha$ die Abschätzung
\begin{align*}
&q(T_{x_{C_j}}^{k-\al+\bet}\partial^{\alpha-\beta}f(y_*)-T_{x_*}^{k-\al+\bet}\partial^{\alpha-\beta}f(y_*))\\
&\leq\sum_{\vert\gamma\vert\leq k-\al+\bet}\underbrace{\vert(y_*-x_{C_j})^{\gamma}\vert}_{\leq(4\delta_*)^{\vert\gamma\vert}}\underbrace{\Vert x_{C_j}-x_*\Vert^{k-\al+\bet-\vert\gamma\vert}}_{\leq(4\delta_*)^{k-\al+\bet-\vert\gamma\vert}} \underbrace{\frac{q(R_{x_*}^{k-\al+\bet-\vert\gamma\vert}\partial^{\alpha-\beta+\gamma}f(x_{C_j}))}{\Vert x_{C_j}-x_*\Vert^{k-\al+\bet-\vert\gamma\vert}}}_{\leq\Vert f\Vert''_{\EE^k,q,K_A\cup K^A}}\\
&\leq (k+1)^n 4^k\delta_*^{\bet}\Vert f\Vert_{\EE^k,q,K_A\cup K^A}.
\end{align*}
Zusammen mit \eqref{gl.hh} und $\vert\partial^{\beta}\varphi_{C_j}(y_*)\vert<\frac{N_k}{\delta_*^{\bet}}$ folgt
\begin{align*}
q(\p F(y_*))
&\leq q(T_{x_*}^{k-\al}\p f(y_*))\\
&+\sum_{j=1}^l\sum_{\beta\leq\alpha}\binom{\alpha}{\beta}\vert\partial^{\beta}\varphi_{C_j}(y_*)\vert q(T_{x_{C_j}}^{k-\al+\bet}\partial^{\alpha-\beta}f(y_*)-T_{x_*}^{k-\al+\bet}\partial^{\alpha-\beta}f(y_*))\\
&\leq (k+1)^n\Vert f\Vert_{\EE^k,q,K_A}+c((k+2)!)^n 4^k N_k\Vert f\Vert_{\EE^k,q,K_A\cup K^A}=:Q_2(f).
\end{align*}
Zuletzt nehmen wir an, dass $y_*$ in $K'\coloneqq\{y\in K\colon d(y,A)\geq 1\}$ liegt. Da $K'$ kompakt ist mit $K'\subseteq\R^n\setminus A$, gibt es endlich viele Würfel $C'_1,\ldots,C'_m\in W$, welche $K'$ überdecken. Seien $C_1,\ldots,C_l$ diejenigen Würfel in $W$, die jeweils mindestens einen der Würfel $C'_1,\ldots,C'_m$ schneiden, und $L\coloneqq \{x_{C_1},\ldots,x_{C_l}\}$. Für alle $x\in C_1'\cup\ldots\cup C_m'$ gilt
\[F(x)=\sum_{j=1}^l\varphi_{C_j}(x)T_{x_{C_j}}^k f(x),\]
also folgt
\[\p F(y_*)=\sum_{j=1}^l\sum_{\beta\leq\alpha}\binom{\alpha}{\beta}\partial^{\beta}\varphi_{C_j}(y_*)T_{x_{C_j}}^{k-\al+\bet}\partial^{\alpha-\beta}f(y_*).\]
Setzen wir \[M\coloneqq\max(\{1\}\cup\{\Vert y-x_{C_j}\Vert\colon y\in K', 1\leq j\leq l\}),\] so gilt
\begin{align*}
q(T_{x_{C_j}}^{k-\al+\bet}\partial^{\alpha-\beta}f(y_*))
&\leq\sum_{\vert\gamma\vert\leq k-\al+\bet}\underbrace{\vert (y_*-{x_{C_j}})^{\gamma}\vert}_{\leq M^{\vert\gamma\vert}\leq M^k} \underbrace{q(f_{\alpha-\beta+\gamma}(x_{C_j}))}_{\leq \Vert f\Vert'_{\EE^k,q,L}}\\
&\leq (k+1)^n M^k\Vert f\Vert_{\EE^k,q,L}
\end{align*}
für alle $j\in\{1,\ldots,l\}$ und $\beta\leq\alpha$. Zusammen mit $\vert\partial^{\beta}\varphi_{C_j}(y_*)\vert<\frac{N_k}{d(y_*,A)^{\bet}}\leq N_k$ folgt
\begin{align*}q(\p F(y_*))&\leq\sum_{j=1}^l\sum_{\beta\leq\alpha}\binom{\alpha}{\beta}\vert\partial^{\beta}\varphi_{C_j}(y_*)\vert q(T_{x_{C_j}}^{k-\al+\bet}\partial^{\alpha-\beta}f(y_*))\\
&\leq c((k+2)!)^n N_k M^k\Vert f\Vert_{\EE^k,q,L}=:Q_3(f).\end{align*}
In allen drei Fällen gilt demnach
\[q(\p F(y_*))\leq Q_1(f)+Q_2(f)+Q_3(f).\]
Daraus folgt 
\[\Vert F\Vert_{C^k,q,K}\leq Q_1(f)+Q_2(f)+Q_3(f),\]
wobei die rechte Seite eine stetige Halbnorm in $f$ ist. Somit ist $\Phi_{\R^n,A}^k$ stetig und der Satz für $U=\R^n$ bewiesen.

Nun seien $U\subseteq\R^n$ eine lokalkonvexe Teilmenge mit dichtem Inneren und $A\subseteq U$ eine relativ abgeschlossene, lokalkompakte Teilmenge. Jedes $x\in A$ hat eine kompakte Umgebung $V_x$ in $A$. Sei $U_x\subseteq U$ eine relativ offene Umgebung von $x$ mit $U_x\cap A\subseteq V_x$. Nach Lemma \ref{rr} gibt es eine $C^{\infty}$-Partition der Eins $(h_x)_{x\in A}$, $h$ auf $U$ mit $\text{supp}(h_x)\subseteq U_x$ für $x\in A$ und $\text{supp}(h)\subseteq U\setminus A$.
Für jedes $x\in A$ ist $K_x\coloneqq\text{supp}(h_x)\cap A\subseteq V_x$ kompakt, also abgeschlossen in $\R^n$. Nach dem vorigen Beweisteil existiert ein stetiger linearer Fortsetzungsoperator
\[\Phi_{\R^n,K_x}^k\colon \EE^k(K_x,E)\to C^k(\R^n,E).\]
Ein $f=(f_{\alpha})_{\al\leq k}\in\EE^k(A,E)$ gegeben, schreiben wir $f_x\coloneqq \big(\Phi_{\R^n,K_x}^k\big(f\vert_{K_x}\big)\big)\vert_U$. Dann gilt $\p f_x(v)=f_{\alpha}(v)$ für alle $\al\leq k$ und $v\in K_x$, und die Abbildung
\[\EE^k(A,E)\to C^k(U,E),\quad f\mapsto f_x\]
ist stetig und linear. Zusammen mit Lemma \ref{ii} sehen wir, dass die Abbildung
\[\Phi_{U,A}^k\colon \EE^k(A,E)\to C^k(U,E),\quad f\mapsto \sum_{x\in A}h_x\cdot f_x\]
stetig und linear ist. 
Es bleibt zu zeigen, dass $\Phi_{U,A}^k$ rechtsinvers zu $\rho_{A,U}^k$ ist. Seien dazu $f=(f_{\alpha})_{\al\leq k}\in\EE^k(A,E)$, $F\coloneqq \Phi_{U,A}^k(f)$, $\al\leq k$ und $v\in A$. Sei $x\in A$ derart, dass $v\in K_x$. Es ist $V\coloneqq U\setminus\text{supp}(h)$ eine relativ offene Teilmenge von $U$, die $A$ enthält. Für alle $w\in V$ gilt $\sum_{y\in A} h_y(w)=1$, also \[f_x(w)=\sum_{y\in A} h_y(w)f_x(w).\] Daraus folgt
\begin{align*}
f_{\alpha}(v)
&=\p f_x(v)
=\sum_{y\in A}\sum_{\beta\leq\alpha}\binom{\alpha}{\beta}\partial^{\beta}h_y(v)\partial^{\alpha-\beta} f_x(v)
=\sum_{y\in A}\sum_{\beta\leq\alpha}\binom{\alpha}{\beta}\partial^{\beta}h_y(v) f_{\alpha-\beta}(v)\\
&=\sum_{y\in A}\sum_{\beta\leq\alpha}\binom{\alpha}{\beta}\partial^{\beta}h_y(v)\partial^{\alpha-\beta} f_y(v)
=\p F(v),
\end{align*}
wobei wir bei der vorletzten Gleichheit benutzt haben, dass $f_{\alpha-\beta}(v)=\partial^{\alpha-\beta} f_y(v)$ im Falle $v\in K_y$ gilt und $\partial^{\beta}h_y(v)=0$ für $v\notin K_y$. Somit ist $\rho_{A,U}^k(F)=f$.
\end{proof}

\subsection*{Glatte Fortsetzungen}
\addcontentsline{toc}{subsection}{\protect\numberline{}Glatte Fortsetzungen}

Soeben haben wir gezeigt, dass im Falle $k<\infty$ stetige lineare Fortsetzungsoperatoren $\EE^k(A,E)\to C^k(U,E)$ existieren. Wenn $E$ metrisierbar ist, dann besitzt jedes $f\in\EE^{\infty}(A,E)$ eine Fortsetzung in $C^{\infty}(U,E)$.

\begin{satz}\label{ll}
Es seien $E$ metrisierbar, $U\subseteq\R^n$ eine lokalkonvexe Teilmenge mit dichtem Inneren und $A\subseteq U$ eine relativ abgeschlossene, lokalkompakte Teilmenge.
Dann ist die Abbildung
\[\rho_{A,U}^{\infty}\colon C^{\infty}(U,E)\to \EE^{\infty}(A,E),\quad f\mapsto ((\p f)\vert_A)_{\alpha\in(\N_0)^n}\]
surjektiv.
\end{satz}

Mit dem Isomorphismus $\EE^{\infty}(A,E)\cong C^{\infty}(A,E)$ für lokalkonvexe Teilmengen $A\subseteq U$ mit dichtem Inneren folgt:

\begin{korollar}
Es seien $E$ metrisierbar, $U\subseteq\R^n$ eine lokalkonvexe Teilmenge mit dichtem Inneren und $A\subseteq U$ eine relativ abgeschlossene, lokalkompakte, lokalkonvexe Teilmenge mit dichtem Inneren. Dann ist die Einschränkung
\[C^{\infty}(U,E)\to C^{\infty}(A,E),\quad f\mapsto f\vert_A\]
surjektiv.
\end{korollar}

Den nun folgenden Ausführungen dient Whitneys Beweis \cite[\S\ 12]{Wh} zum Vorbild.

\begin{proof}[Beweis von Satz \ref{ll}]
Wir beweisen den Satz zuerst für den Spezialfall $U=\R^n,$ wobei der Fall $A=\emptyset$ klar ist. Sei $A\subseteq \R^n$ eine abgeschlossene, nichtleere Teilmenge, und
sei $f=(f_{\alpha})_{\alpha\in(\N_0)^n}\in\EE^{\infty}(A,E)$. Es seien $\{q_j\colon j\in\N\}$ eine abzählbare, die Topologie von $E$ definierende Menge von Halbnormen $q_j$ auf $E$, $a\in A$ und $(\varphi_C)_{C\in W}$ eine $C^{\infty}$-Partition der Eins auf $\R^n\setminus A$ wie in Lemma \ref{j}. Für jedes $i\in\N$ wählen wir ein
\[M_i>\max\{q_j(f_{\alpha}(x))\colon j\leq i,\al\leq i, x\in A\cap\overline{B}_{2^i}(a)\}\] und ein \[0<\delta_i<\frac{1}{c((i+2)!)^n8^{i+1}M_{i+1}N_i}\]
mit $c$ wie in Lemma \ref{l} (d) und $N_i$ wie in Lemma \ref{j}. 
Nach rekursivem Verkleinern können wir annehmen, dass \[\delta_{i+1}<\frac{\delta_i}{2}\] für alle $i\in\N$. Für alle $C\in W$ setzen wir
\[g_C\coloneqq\max(\{i\in\N\colon d(y_C,A)<\delta_i\}\cup\{0\}).\]
Wir werden sehen, dass
\begin{align*}F\colon\R^n\to E,\quad x\mapsto \begin{cases} \sum_{C\in W}\varphi_C(x)T_{x_C}^{g_C}f(x) &\text{wenn }x\in \R^n\setminus A; \\ f_0(x) &\text{wenn }x\in A \end{cases} \end{align*}
ein Urbild von $f$ unter $\rho_{A,\R^n}^{\infty}$ ist.

\ee{Behauptung: Für alle $\alpha\in(\N_0)^n$, $x_0\in\partial A$, $j\in\N$ und $\eps>0$ gibt es ein $\delta>0$ derart, dass \[q_j(\partial^{\alpha}F(y)-f_{\alpha}(x_0))<\eps\] für alle $y\in\R^n\setminus A$ mit $\Vert y-x_0\Vert<\delta$.}

Ist dies gezeigt, kann man analog wie im Beweis des Satzes \ref{k} mithilfe von Lemma \ref{b} argumentieren, dass $F$ eine $C^{\infty}$-Funktion mit $(\partial^{\alpha}F)\vert_A=f_{\alpha}$ für alle $\alpha\in(\N_0)^n$ ist.

Zum Beweis der Behauptung seien $\alpha$, $x_0$, $j$, $\eps$ wie oben und $k\coloneqq\al$. 
Weiter sei $\widetilde{F}\coloneqq\Phi_{\R^n,A}^k(\text{pr}_k(f))$ die in \eqref{gl.h} definierte Fortsetzung des Whitneyschen $k$-Jets $\text{pr}_k(f)$.
Wir wählen ein $i\in\N$ so groß, dass $i\geq\max\{j,k+2\}$, $\frac{1}{2^i}<\eps$ und $x_0\in \overline{B}_{2^i}(a)$ erfüllt sind, und ein $\delta>0$ so klein, dass $\delta<\min\{\delta_i,\frac{1}{2}\}$ und
\begin{align}\label{gl.ii}q_j(\partial^{\alpha}\widetilde{F}(y)-f_{\alpha}(x_0))<\frac{\eps}{2}\end{align}
für alle $y\in \R^n\setminus A$ mit $\Vert y-x_0\Vert<\delta$ gelten. Sei $y_*\in\R^n\setminus A$ mit $\Vert y_*-x_0\Vert<\delta$ beliebig. Zudem seien $\delta_*\coloneqq d(y_*,A)$, $C\in W$ mit $y_*\in C$ und $C_1,\ldots,C_m$ diejenigen Würfel in $W$, welche $C$ schneiden. Dann ist
\[r\coloneqq\max\{s\in\N\colon\delta_*<\delta_s\}\geq i.\]
Für jedes $l\in\{1,\ldots,m\}$ gilt $k< g_{C_l}\leq r+1$; zum einen ist nämlich 
\[\delta_{g_{C_{l}}+1}\leq d(y_{C_l},A)< 2\delta_*<2\delta_i<\delta_{i-1}\] 
und somit $g_{C_l}>i-2\geq k$, und zum anderen gilt
\[\delta_{g_{C_l}}>d(y_{C_l},A)> \frac{\delta_*}{2}\geq\frac{\delta_{r+1}}{2}>\delta_{r+2},\]
woraus $g_{C_l}\leq r+1$ folgt. 
Für alle $x\in C$ gilt
\[F(x)-\widetilde{F}(x)=\sum_{l=1}^m \varphi_{C_l}(x)(T_{x_{C_l}}^{g_{C_l}}f(x)-T_{x_{C_l}}^kf(x));\]
dies ergibt
\[\p F(y_*)-\p\widetilde{F}(y_*)=\sum_{l=1}^m\sum_{\beta\leq\alpha}\binom{\alpha}{\beta}\partial^{\beta}\varphi_{C_l}(y_*)\big(T_{x_{C_l}}^{g_{C_l}-k+\bet}\partial^{\alpha-\beta}f(y_*)-T_{x_{C_l}}^{\bet}\partial^{\alpha-\beta}f(y_*)\big)\]
mit
\[T_{x_{C_l}}^{g_{C_l}-k+\bet}\partial^{\alpha-\beta}f(y_*)-T_{x_{C_l}}^{\bet}\partial^{\alpha-\beta}f(y_*)=\sum_{\bet<\vert\gamma\vert\leq g_{C_l}-k+\bet}\frac{(y_*-x_{C_l})^{\gamma}}{\gamma!}f_{\alpha-\beta+\gamma}(x_{C_l})\]
für alle $l\in\{1,\ldots,m\}$ und $\beta\leq\alpha$.
Es gilt $x_{C_l}\in\overline{B}_{2^{r+1}}(a)$, denn aus $\Vert x_0-a\Vert\leq 2^i\leq 2^r$ und $\Vert x_{C_l}-x_0\Vert< 4\delta<2\leq 2^r$ folgt $\Vert x_{C_l}-a\Vert\leq\Vert x_{C_l}-x_0\Vert+\Vert x_0-a\Vert\leq2^{r+1}.$
Somit ist
\begin{align*}
&q_j(T_{x_{C_l}}^{g_{C_l}-k+\bet}\partial^{\alpha-\beta}f(y_*)-T_{x_{C_l}}^{\bet}\partial^{\alpha-\beta}f(y_*))\\
&\leq\underbrace{\sum_{\bet<\vert\gamma\vert\leq g_{C_l}-k+\bet}}_{\leq(r+2)^n\text{ Summanden} }\underbrace{\vert(y_*-x_{C_l})^{\gamma}\vert}_{\leq(4\delta_*)^{\vert\gamma\vert}} \underbrace{q_j(f_{\alpha-\beta+\gamma}(x_{C_l}))}_{\leq M_{r+1}}\\
&\leq (r+2)^n 4^{r+1}M_{r+1}\delta_*^{\bet}\delta_r.
\end{align*}
Zusammen mit $\vert\partial^{\beta}\varphi_{C_l}(y_*)\vert<\frac{N_r}{\delta_*^{\bet}}$ folgt
\begin{align*}
&q_j(\p F(y_*)-\p\widetilde{F}(y_*))\\
&\leq \sum_{l=1}^m\sum_{\beta\leq\alpha}\binom{\alpha}{\beta}\vert\partial^{\beta}\varphi_{C_l}(y_*)\vert q_j(T_{x_{C_l}}^{g_{C_l}-k+\bet}\partial^{\alpha-\beta}f(y_*)-T_{x_{C_l}}^{\bet}\partial^{\alpha-\beta}f(y_*))\\
&\leq c((r+2)!)^n 4^{r+1}M_{r+1}N_r\delta_r<\frac{1}{2^{r+1}}\leq\frac{1}{2}\cdot\frac{1}{2^i}<\frac{\eps}{2}.
\end{align*}
Gemeinsam mit \eqref{gl.ii} ergibt dies
\[q_j(\partial^{\alpha} F(y_*)-f_{\alpha}(x_0))\leq q_j(\partial{^\alpha}F(y_*)-\partial{^\alpha}\widetilde{F}(y_*))+q_j(\partial^{\alpha}\widetilde{F}(y_*)-f_{\alpha}(x_0))<\eps,\]
was die Behauptung beweist. Damit ist der Satz für $U=\R^n$ gezeigt.

Nun seien $U\subseteq\R^n$ eine lokalkonvexe Teilmenge mit dichtem Inneren und $A\subseteq U$ eine relativ abgeschlossene, lokalkompakte Teilmenge. Wie im Beweis von Satz \ref{k} können wir eine $C^{\infty}$-Partition der Eins $(h_x)_{x\in A}$, $h$ auf $U$ derart wählen, dass $K_x\coloneqq \text{supp}(h_x)\cap A$ für alle $x\in A$ kompakt und $\text{supp}(h)\subseteq U\setminus A$ ist. Sei $f\in\EE^{\infty}(A,E)$ beliebig. Für jedes $x\in A$ besitzt $f\vert_{K_x}$ ein Urbild $f_x$ unter $\rho_{K_x,\R^n}^{\infty}\colon C^{\infty}(\R^n,E)\to\EE^{\infty}(K_x,E)$. Wir setzen
\[F\coloneqq \sum_{x\in A}h_x\cdot (f_x\vert_U)\in C^{\infty}(U,E).\]
Analog wie im Beweis von Satz \ref{k} sieht man, dass $F$ ein Urbild von $f$ unter $\rho^{\infty}_{A,U}$ ist.
\end{proof}

\subsection*{Zurückholen von Whitneyschen $k$-Jets}
\addcontentsline{toc}{subsection}{\protect\numberline{}Zurückholen von Whitneyschen $k$-Jets}

Es seien $k\in\N_0\cup\{\infty\}$ und $g\colon U\to V$ eine $C^k$-Funktion zwischen lokalkonvexen Teilmengen $U\subseteq\R^s$ und $V\subseteq\R^t$ mit dichtem Inneren. Wir erinnern an die stetige lineare Abbildung \[g^*\colon C^k(V,E)\to C^k(U,E),\quad f\to f\circ g.\] Für gegebene Teilmengen $A\subseteq U$ und $B\subseteq V$ mit $g(A)\subseteq B$ konstruieren wir eine stetige lineare Abbildung
\[(g\vert_A)^*\colon\EE^k(B,E)\to\EE^k(A,E).\]

\begin{definition}
Für alle $f=(f_{\beta})_{\bet\leq k}\in\J^k(B,E)$ setzen wir \[(g\vert_A)^*(f)\coloneqq \bigg(A\ni x\mapsto\sum_{\bet\leq\al}p_{\alpha,\beta}\left((\partial^{\gamma}g(x))_{\vert\gamma\vert\leq\al}\right) f_{\beta}(g(x))\bigg)_{\al\leq k}\in\J^k(A,E),\]
wobei hier die $p_{\alpha,\beta}$ für $\alpha\in(\N_0)^s$ mit $\al\leq k$ und $\beta\in(\N_0)^t$ mit $\bet\leq\al$
die Fa\`a-di-Bruno-Polynome aus Definition \ref{zz} seien.
\end{definition}

\begin{bemerkung}\label{ddd}
\begin{itemize}
\item[(a)] Ist $l\leq k$, dann gilt
\[\text{pr}_l((g\vert_A)^*(f))=(g\vert_A)^*(\text{pr}_l(f))\]
für alle $f\in\J^k(B,E)$.
\item[(b)] Ist $A'\subseteq A$ eine Teilmenge, so gilt
\[((g\vert_A)^*(f))\vert_{A'}=(g\vert_{A'})^*(f\vert_{g(A')})\]
für alle $f\in\J^k(B,E)$.
\end{itemize}
\end{bemerkung}

\begin{bemerkung}\label{ggg}
Mit der Fa\`a-di-Bruno-Formel gilt
\[((\p(f\circ g))\vert_A)_{\al\leq k}=(g\vert_A)^*(((\partial^{\beta}f)\vert_B)_{\bet\leq k}).\]
für alle $f\in C^k(V,E)$.
\end{bemerkung}

\begin{satz}\label{uu}
Es seien $U\subseteq\R^s$,$V\subseteq\R^t$ und $W\subseteq\R^u$ lokalkonvexe Teilmengen mit dichtem Inneren, $g\colon U\to V$ sowie $h\colon V\to W$ beide $C^k$-Funktionen und $A\subseteq U$, $B\subseteq V$ sowie $C\subseteq W$ Teilmengen mit $g(A)\subseteq B$ und $h(B)\subseteq C$.
\begin{itemize}
\item[(a)] Für alle $f\in\EE^k(B,E)$ gilt $(g\vert_A)^*(f)\in\EE^k(A,E)$, und die lineare Abbildung
\[(g\vert_A)^*\colon\EE^k(B,E)\to\EE^k(A,E),\quad f\mapsto (g\vert_A)^*(f)\]
ist stetig. Wir schreiben auch $g^*\coloneqq(g\vert_A^B)^*\coloneqq (g\vert_A)^*$.
\item[(b)] Es gilt
\[(g\vert_A)^*\circ(h\vert_B)^*=((h\circ g)\vert_A)^*.\]
als Abbildung $\EE^k(C,E)\to\EE^k(A,E)$.
\item[(c)] Sei $\ee{id}_U\colon U\to U$ die Identität. Dann gilt
\[(\ee{id}_U\vert_A^A)^*=\ee{id}_{\EE^k(A,E)}.\]
Allgemeiner: Für jede Teilmenge $A'\subseteq A$ ist $(\text{\ee{id}}_U\vert_{A'}^A)^*$ gegeben durch
\[(\text{\ee{id}}_U\vert_{A'}^A)^*\colon\EE^k(A,E)\to\EE^k(A',E),\quad f\mapsto f\vert_{A'}.\]
\end{itemize}
Aus (b) und (c) folgt: Falls $g$ ein $C^k$-Diffeo"-mor"-phis"-mus ist und $g(A)=B$, so ist $(g\vert_A)^*$ ein Iso"-mor"-phis"-mus von topologischen Vektorräumen mit $((g\vert_A)^*)^{-1}=((g^{-1})\vert_B)^*.$
\end{satz}

\begin{proof}
\begin{itemize}
\item[(a)] Wir zeigen die Aussage zunächst für den Spezialfall, dass $k<\infty$ und $B$ kompakt ist.
Nach Satz \ref{k} existiert ein stetiger linearer Fortsetzungsoperator
\[\Phi_{V,B}^k\colon \EE^k(B,E)\to C^k(V,E).\]
Zudem betrachten wir die stetigen linearen Abbildungen
\[g^*\colon C^k(V,E)\to C^k(U,E),\quad f\to f\circ g\]
und
\[\rho_{A,U}^k\colon C^k(U,E)\to \EE^k(A,E),\quad f\mapsto ((\p f)\vert_A)_{\al\leq k}.\]
Mit Bemerkung \ref{ggg} gilt
\[(g\vert_A)^*=\rho^k_{A,U}\circ g^*\circ \Phi^k_{V,B},\]
womit gezeigt ist, dass $(g\vert_A)^*$ tatsächlich Werte in $\EE^k(A,E)$ hat und stetig ist.

Nun nehmen wir an, dass $k\in\N_0\cup\{\infty\}$ beliebig sei und $B$ nicht notwendig kompakt. Seien $f\in\EE^k(B,E)$, $l\in\N_0$ mit $l\leq k$, $q$ eine stetige Halbnorm auf $E$ und $K\subseteq A$ eine kompakte Teilmenge. Nach dem Vorigen liegt
\[\text{pr}_l\big(((g\vert_A)^*(f))\vert_K\big)=(g\vert_K)^*\big(\text{pr}_l\big(f\vert_{g(K)}\big)\big)\]
in $\EE^l(K,E)$. Da $l$ und $K$ beliebig waren, folgt $(g\vert_A)^*(f)\in\EE^k(A,E)$. Weiterhin gilt
\[\Vert(g\vert_A)^*(f)\Vert_{\EE^l,q,K}=\big\Vert(g\vert_K)^*\big(\text{pr}_l\big(f\vert_{g(K)}\big)\big)\big\Vert_{\EE^l,q,K}\to 0\ \text{für }f\to 0,\]
was die Stetigkeit von $(g\vert_A)^*$ beweist.

\item[(b)] Zuerst seien $k<\infty$ und $C$ kompakt.
Da das Diagramm
\[\begin{xy}\xymatrix{
\EE^k(C,E)\ar[d]_{\Phi^k_{W,C}}\ar[rr]^{(h\vert_B)^*} & & \EE^k(B,E)\ar[rr]^{(g\vert_A)^*} & & \EE^k(A,E)\\
C^k(W,E)\ar[rr]^{h^*} & & C^k(V,E)\ar[rr]^{g^*}\ar[u]^{\rho^k_{B,V}} & & C^k(U,E)\ar[u]_{\rho^k_{A,U}}
}\end{xy}\]
mit einem Fortsetzungsoperator $\Phi^k_{W,C}$ kommutativ ist, erhalten wir
\begin{align*}
(g\vert_A)^*\circ(h\vert_B)^*
&=(g\vert_A)^*\circ (\rho^k_{B,V}\circ h^*\circ\Phi^k_{W,C})
=((g\vert_A)^*\circ \rho^k_{B,V})\circ (h^*\circ\Phi^k_{W,C})\\
&=(\rho_{A,U}^k\circ g^*)\circ(h^*\circ\Phi^k_{W,C})
=\rho_{A,U}^k\circ (g^*\circ h^*)\circ\Phi^k_{W,C}\\
&=\rho^k_{A,U}\circ (h\circ g)^*\circ\Phi^k_{W,C}
=((h\circ g)\vert_A)^*.
\end{align*}

Nun seien $k\in\N_0\cup\{\infty\}$ beliebig und $C$ nicht notwendig kompakt. Für jedes $f\in\EE^k(C,E)$ und für alle $l\in\N_0$ mit $l\leq k$ und kompakte Teilmengen $K\subseteq A$ gilt nach dem Vorigen
\begin{align*}
\text{pr}_l\big((((g\vert_A)^*\circ(h\vert_B)^*)(f))\vert_K\big)
&=\big((g\vert_K)^*\circ(h\vert_{g(K)})^*\big)\big(\text{pr}_l\big(f\vert_{h(g(K))}\big)\big)\\
&=((h\circ g)\vert_K)^*\big(\text{pr}_l\big(f\vert_{h(g(K))}\big)\big)\\
&=\text{pr}_l\big((((h\circ g)\vert_A)^*(f))\vert_K\big)
\end{align*}
und somit
\[((g\vert_A)^*\circ(h\vert_B)^*)(f)=((h\circ g)\vert_A)^*(f).\]
\item[(c)] Es genügt, den ersten Teil der Aussage für $k<\infty$ und kompaktes $A$ zu zeigen. Mit einem Fortsetzungsoperator $\Phi^k_{U,A}\colon \EE^k(A,E)\to C^k(U,E)$ gilt
\[(\text{id}_U\vert_A^A)^*=\rho^k_{A,U}\circ (\text{id}_U)^*\circ\Phi^k_{U,A}=\rho^k_{A,U}\circ \text{id}_{C^k(U,E)}\circ\Phi^k_{U,A}=\text{id}_{\EE^k(A,E)}.\]
Für jede Teilmenge $A'\subseteq A$ und alle $f\in\EE^k(A,E)$ folgt
\[(\text{id}_U\vert_{A'}^A)^*(f)=((\text{id}_U\vert_{A}^A)^*(f))\vert_{A'}=f\vert_{A'}.\]
\end{itemize}
\end{proof}


\begin{beispiel}\label{fff}
Es seien $m\in\{1,\ldots,n\}$ und $A\subseteq\R^m$ sowie $B\subseteq\R^{n-m}$ mit $0\in B$ zwei Teilmengen. 
Wir zeigen, dass es eine Einbettung von topologischen Vektorräumen $\EE^k(A,E)\hookrightarrow\EE^k(A\times B,E)$ gibt: Die Inklusion
\[j\colon\R^m\to\R^n,\quad x\mapsto (x,0)\]
und die Projektion
\[\pi\colon \R^n=\R^m\times\R^{n-m}\to\R^m,\quad (x,y)\mapsto x\]
induzieren stetige lineare Abbildungen
\[(j\vert_A)^*\colon\EE^k(A\times B,E)\to\EE^k(A,E),\quad (f_{\alpha})_{\substack{\begin{subarray}{l}\alpha\in(\N_0)^n\colon\\ \al\leq k\end{subarray}}}
\mapsto(f_{(\beta,0)}\circ j\vert_A)_{\substack{\begin{subarray}{l}\beta\in(\N_0)^m\colon\\ \bet\leq k\end{subarray}}}\]
bzw.
\[(\pi\vert_{A\times B})^*\colon\EE^k(A,E)\to\EE^k(A\times B,E),\quad (f_{\beta})_{\substack{\begin{subarray}{l}\beta\in(\N_0)^m\colon\\ \bet\leq k\end{subarray}}}\mapsto (g_{\alpha})_{\substack{\begin{subarray}{l}\alpha\in(\N_0)^n\colon\\ \al\leq k\end{subarray}}}\]
mit 
\[g_{\alpha}=
\begin{cases}
f_{\beta}\circ\pi\vert_{A\times B}&\text{wenn }\alpha=(\beta,0)\text{ für ein }\beta\in(\N_0)^m;\\
0&\text{sonst}
\end{cases}\]
für $\alpha\in(\N_0)^n$ mit $\al\leq k$. Anhand von
\[(j\vert_A)^*\circ(\pi\vert_{A\times B})^*=((\pi\circ j)\vert_A)^*=(\text{id}_{\R^m}\vert_A)^*=\text{id}_{\EE^k(A,E)}\]
sehen wir, dass $(\pi\vert_{A\times B})^*$ eine Einbettung von topologischen Vektorräumen ist.
\end{beispiel}

\newpage

\section{Ein Fortsetzungssatz für Funktionen auf Mannigfaltigkeiten}

In diesem Kapitel definieren wir Whitneysche $k$-Jets auf Teilmengen einer endlichdimensionalen $C^k$-Mannigfaltigkeit mit rauem Rand und verallgemeinern für solche den Whitneyschen Fortsetzungssatz.

\begin{definition}[s. {\cite[Definition 3.5.1 (a)]{Gl}}]
Es seien $k\in\N_0\cup\{\infty\}$ und $M$ ein Hausdorffscher topologischer Raum. Eine \ee{raue $E$-Karte} für $M$ ist ein Homöomorphismus $\varphi\colon U_{\varphi}\to V_{\varphi}$ auf einer offenen Teilmenge $U_{\varphi}\subseteq M$ in eine lokalkonvexe Teilmenge $V_{\varphi}\subseteq E$ mit dichtem Inneren. Ein \ee{$C^k$-Atlas} für $M$ ist eine Menge $\A$ von rauen $E$-Karten für $M$, die den folgenden Bedingungen genügt:
\begin{itemize}
\item[(A1)] $M=\bigcup_{\varphi\in\A}U_{\varphi}$;
\item[(A2)] Alle $\varphi,\psi\in\A$ sind \ee{$C^k$-kompatibel}, d.h. die \ee{Kartenwechsel}
\[\varphi\circ\psi^{-1}\colon \psi(U_{\varphi}\cap U_{\psi})\to V_{\varphi}\subseteq E\]
sind $C^k$-Funktionen;
\item[(A3)] Für alle $\varphi,\psi\in\A$ und $x\in U_{\varphi}\cap U_{\psi}$ gilt $\varphi(x)\in\partial V_{\varphi}$ genau dann, wenn $\psi(x)\in\partial V_{\psi}$.
\end{itemize}
\end{definition}

Wenn $k\geq 1$ oder $E$ endlichdimensional ist, dann folgt (A3) automatisch aus (A1) und (A2) (s. \cite[Lemma 3.5.6 und Remark 3.5.8]{Gl}).

\begin{definition}[s. {\cite[Definition 3.5.1 (b)]{Gl}}]
Eine \ee{auf $E$ modellierte $C^k$-Man"-nig"-fal"-tig"-keit mit rauem Rand} ist ein Hausdorffscher topologischer Raum $M$ zusammen mit einem maximalen $C^k$-Atlas $\A$ von rauen $E$-Karten für $M$. Ist ein solches Paar $(M,\A)$ gegeben, so verwenden wir das Wort \glqq Karte\grqq\ nur für die rauen $E$-Karten $\varphi\in\A$.
Eine \ee{Karte um $x\in M$} ist eine ist eine Karte $\varphi\colon U_{\varphi}\to V_{\varphi}$ in $\A$ mit $x\in U_{\varphi}$. Eine auf $\R^n$ modellierte $C^k$-Man"-nig"-fal"-tig"-keit mit rauem Rand bezeichnet man als \ee{$n$-dimensional}.
\end{definition}

\begin{definition}[s. {\cite[Definition 3.5.19]{Gl}}]\label{ccc}
Eine Teilmenge $A\subseteq M$ heißt \ee{volldimensionale Untermannigfaltigkeit}, wenn für jedes $x\in A$ eine Karte $\varphi\colon U_{\varphi}\to V_{\varphi}\subseteq E$ für $M$ um $x$ existiert, so dass $\varphi(A\cap U_{\varphi})$ eine lokalkonvexe Teilmenge von $E$ mit dichtem Inneren ist. Solche Karten $\varphi$ nennt man \ee{an $A$ angepasst}, und die Funktionen $\varphi\vert_{A\cap U_{\varphi}}\colon A\cap U_{\varphi}\to \varphi(A\cap U_{\varphi})$ bilden einen $C^k$-Atlas für $A$. Mit dem zugehörigen maximalen $C^k$-Atlas ist $A$ eine $C^k$-Mannigfaltigkeit mit rauem Rand.
\end{definition}

Jede lokalkonvexe Teilmenge $U\subseteq\R^n$ mit dichtem Inneren können wir als volldimensionale $C^k$-Unter"-mannig"-faltig"-keit von $\R^n$ auffassen, versehen mit dem maximalen $C^k$-Atlas von rauen Karten, welcher $\{\text{id}_U\}$ enthält (vgl. \cite[Example 3.5.3 (b)]{Gl}).

\newpage
\begin{definition}[s. {\cite[Definition 3.5.11]{Gl}}]
Es seien $M$ und $N$ beide $C^k$-Man"-nig"-fal"-tig"-kei"-ten mit rauem Rand, modelliert auf lokalkonvexen Räumen $E$ bzw. $F$. Eine Funktion $f\colon M\to N$ heißt \ee{$C^k$-Funktion}, wenn $f$ stetig ist und
\[\psi\circ f\circ\varphi^{-1}\colon E\supseteq\varphi(f^{-1}(U_{\psi})\cap U_{\varphi})\to F\]
für jede Karte $\varphi\colon U_{\varphi}\to V_{\varphi}$ von $M$ und $\psi\colon U_{\psi}\to V_{\psi}$ von $N$ eine $C^k$-Funktion ist. Wir schreiben $C^k(M,N)$ für die Menge aller $C^k$-Funktionen $f\colon M\to N$.
\end{definition}

\subsection*{$C^k$-Parakompaktheit und $C^k$-Regularität}
\addcontentsline{toc}{subsection}{\protect\numberline{}$C^k$-Parakompaktheit und $C^k$-Regularität}

Von besonderem Interesse für uns sind Mannigfaltigkeiten, die $C^k$-parakompakt oder $C^k$-regulär sind.

\begin{definition}[s. {\cite[Definition 7.11]{Gl2}}]
Es seien $k\in\N_0\cup\{\infty\}$ und $M$ eine $C^k$-Mannigfaltigkeit mit rauem Rand. Eine Partition der Eins $(h_j)_{j\in J}$ auf dem topologischen Raum $M$ heißt \ee{$C^k$-Partition der Eins}, wenn jedes $h_j$ eine $C^k$-Funktion ist.
Man nennt $M$ \ee{$C^k$-parakompakt}, wenn für jede offene Überdeckung $(U_j)_{j\in J}$ von $M$ eine $C^k$-Partition der Eins $(h_j)_{j\in J}$ auf $M$ existiert, die $(U_j)_{j\in J}$ in dem Sinne \ee{untergeordnet} ist, dass $\text{supp}(h_j)\subseteq U_j$ für alle $j\in J$ gilt.
\end{definition}

\begin{satz}
Sei $f\colon N\to M$ eine $C^k$-Funktion zwischen $C^k$-Mannigfaltigkeiten mit rauem Rand, die eine topologische Einbettung ist. Wenn jede offene Teilmenge von $M$ $C^k$-parakompakt ist, so  ist auch $N$ $C^k$-parakompakt.
\end{satz}

\begin{proof}
Sei $(U_j)_{j\in J}$ eine offene Überdeckung von $N$. Dann ist $(f(U_j))_{j\in J}$ eine Überdeckung von $f(N)$ mit relativ offenen Teilmengen $f(U_j)\subseteq f(N)$. Es gibt offene Teilmengen $V_j\subseteq M$ mit $f(U_j)=f(N)\cap V_j$ für $j\in J$. Nach Voraussetzung ist die offene Menge $V\coloneqq\bigcup_{j\in J}V_j$ $C^k$-parakompakt, also existiert eine $C^k$-Partition der Eins $(f_j)_{j\in J}$ auf $V$, die $(V_j)_{j\in J}$ untergeordnet ist. Die Funktionen 
\[h_j\coloneqq f_j\circ f\vert^V\colon N\to\R\]
bilden eine $C^k$-Partition der Eins $(h_j)_{j\in J}$ auf $N$, die $(U_j)_{j\in J}$ untergeordnet ist.
\end{proof}

\begin{beispiele}\label{nn}
\begin{itemize}
\item[(a)] Ist $M$ eine $C^k$-Mannigfaltigkeit mit rauem Rand, deren offene Teilmengen alle $C^k$-parakompakt sind, so ist jede volldimensionale Untermannigfaltigkeit $A\subseteq M$ $C^k$-parakompakt.
\item[(b)] Bekanntlich sind in $\R^n$ alle offenen Teilmengen $C^{\infty}$-parakompakt. Somit ist jede lokalkonvexe Teilmenge $U\subseteq \R^n$ mit dichtem Inneren $C^{\infty}$-parakompakt.
\end{itemize}
\end{beispiele}

\begin{definition}[s. {\cite[Proposition 3.5.29 und Definition 3.5.30]{Gl}}]\label{oo}
Sei $k\in\N_0\cup\{\infty\}$. Eine $C^k$-Mannigfaltigkeit $M$ mit rauem Rand heißt \ee{$C^k$-regulär}, wenn sie die folgenden äquivalenten Bedingungen erfüllt:\newpage
\begin{itemize}
\item[(a)] Die Topologie auf $M$ ist initial bezüglich $C^k(M,\R)$.
\item[(b)] Für jedes $x\in M$ und jede $x$-Umgebung $U\subseteq M$ existiert eine $C^k$-Funktion $f\colon M\to\R$ mit $f(x)\neq 0$ und $\text{supp}(f)\subseteq U$.
\item[(c)] Für jedes $x\in M$ und jede $x$-Umgebung $U\subseteq M$ existiert eine $C^k$-Funktion $f\colon M\to\R$ mit $f(M)\subseteq [0,1]$, $\text{supp}(f)\subseteq U$ und $f\vert_V=1$ für eine $x$-Umgebung $V\subseteq M$.
\end{itemize}
\end{definition}

\begin{satz}
Sei $g\colon N\to M$ eine $C^k$-Funktion zwischen $C^k$-Mannigfaltigkeiten mit rauem Rand, die eine topologische Einbettung ist. Wenn $M$ $C^k$-regulär ist, so auch $N$.
\end{satz}

Beispielsweise ist jede volldimensionale Untermannigfaltigkeit $A\subseteq M$ $C^k$-regulär, wenn $M$ $C^k$-regulär ist.

\begin{proof}
Seien $x\in N$ und $U\subseteq N$ eine $x$-Umgebung. Dann ist $g(U)$ eine $g(x)$-Um"-ge"-bung in $g(N)$, also gibt es eine offene $g(x)$-Umgebung $V\subseteq M$ mit $V\cap g(N)\subseteq g(U)$. Da $M$ $C^k$-regulär ist, existiert eine $C^k$-Funktion $h\colon M\to\R$ mit $h(g(x))\neq 0$ und $\text{supp}(h)\subseteq V$. Somit ist $f\coloneqq h\circ g\colon N\to\R$ eine $C^k$-Funktion mit $f(x)\neq 0$ und $\text{supp}(f)\subseteq U$.
\end{proof}

Welche Mannigfaltigkeiten sind $C^k$-regulär oder $C^k$-parakompakt?

\begin{definition}
Ein topologischer Raum $X$ heißt \ee{regulär}, wenn er Hausdorffsch ist und für jedes $x\in X$ jede $x$-Umgebung $U\subseteq X$ eine abgeschlossene $x$-Umgebung $A\subseteq X$ enthält.
\end{definition}

Sei $M$ eine auf $E$ modellierte $C^k$-Man"-nig"-fal"-tig"-keit mit rauem Rand. Wenn $M$ ein regulärer topologischer Raum und $E$ $C^k$-regulär ist, so ist $M$ $C^k$-regulär (s. \cite[Proposition 3.5.31]{Gl}).

Bekanntlich ist $\R^n$ $C^k$-regulär. Somit ist eine endlichdimensionale $C^k$-Mannig"-fal"-tig"-keit mit rauem Rand genau dann $C^k$-regulär, wenn sie regulär ist. Insbesondere ist jede lokalkompakte $C^k$-Mannig"-fal"-tig"-keit mit rauem Rand endlichdimensional und regulär, also $C^k$-regulär (vgl. ebd.).

\begin{bemerkung}
Sei $M$ eine endlichdimensionale $C^k$-Mannigfaltigkeit mit rauem Rand, welche eine der beiden folgenden Eigenschaften hat:
\begin{itemize}
\item[(a)] $M$ hat eine abzählbare Basis der Topologie und ist als topologischer Raum regulär.
\item[(b)] $M$ ist metrisierbar und separabel.
\end{itemize}
Dann ist $M$ $C^k$-parakompakt.
\end{bemerkung}

\begin{proof}
\begin{itemize}
\item[(a)] $M$ hat die Lindelöf-Eigenschaft\ftnote{D.h. jede offene Überdeckung von $M$ besitzt eine abzählbare Teilüberdeckung.} und ist $C^k$-regulär. Nach \cite[Theorem 16.10]{Mi} ist $M$ also $C^k$-parakompakt.
\item[(b)] Nach dem Metrisationssatz von Urysohn sind (a) und (b) äquivalent.
\end{itemize}
\end{proof}

Die nächsten Aussagen über $C^k$-reguläre Mannigfaltigkeiten werden dem Beweis des Fortsetzungssatzes dienlich sein.

\begin{lemma}\label{aaa}
Es seien $M$ eine $C^k$-reguläre $C^k$-Mannigfaltigkeit mit rauem Rand, $A\subseteq M$ eine kompakte Teilmenge und $U\subseteq M$ eine offene Teilmenge mit $A\subseteq U$. Dann existiert eine $C^k$-Funktion $f\colon M\to\R$ mit $f(M)\subseteq[0,1]$ und $\text{\ee{supp}}(f)\subseteq U$, so dass $f\vert_W=1$ für eine offene Teilmenge $W\subseteq M$ mit $A\subseteq W$ gilt.
\end{lemma}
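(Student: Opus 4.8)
The plan is to build $f$ from finitely many local bump functions supplied by $C^k$-regularity. Since $M$ is $C^k$-regular, condition (c) of Definition \ref{oo} yields for each $x\in A$ a $C^k$-function $g_x\colon M\to\R$ with $g_x(M)\subseteq[0,1]$, $\text{supp}(g_x)\subseteq U$ and $g_x\vert_{V_x}=1$ for some open neighbourhood $V_x\subseteq M$ of $x$ (shrinking the given neighbourhood to its interior if necessary). The family $(V_x)_{x\in A}$ is then an open cover of the compact set $A$, so I would extract a finite subcover $V_{x_1},\ldots,V_{x_m}$ and put $W\coloneqq V_{x_1}\cup\ldots\cup V_{x_m}$, an open set with $A\subseteq W$.

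Next I would assemble these into a single function by the standard product trick,
\[f\coloneqq 1-\prod_{i=1}^m(1-g_{x_i}).\]
Each factor $1-g_{x_i}$ is a $C^k$-function with values in $[0,1]$, and finite products of real-valued $C^k$-functions on a $C^k$-manifold are again $C^k$ (locally in charts this reduces to the Leibniz formula for $C^k$-functions on locally convex subsets of $\R^n$). Hence $f$ is a $C^k$-function, and since every factor lies in $[0,1]$, so does their product and therefore $f(M)\subseteq[0,1]$.

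It remains to verify the two support/value conditions, and this is the only point requiring a little care. On $W$ every point lies in some $V_{x_i}$, where $g_{x_i}=1$, so that factor $1-g_{x_i}$ vanishes, the product is $0$, and thus $f\vert_W=1$. For the support I would use that each factor takes values in $[0,1]$, so the product equals $1$ exactly when all factors do; hence $f(y)\neq 0$ holds precisely when $g_{x_i}(y)\neq 0$ for at least one $i$. This gives $\{f\neq 0\}=\bigcup_{i=1}^m\{g_{x_i}\neq 0\}$, and taking the closure of this finite union yields $\text{supp}(f)=\bigcup_{i=1}^m\text{supp}(g_{x_i})\subseteq U$, as required. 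An equally valid alternative would be to set $g\coloneqq\sum_{i=1}^m g_{x_i}$ and compose with a fixed smooth cut-off $\chi\colon\R\to[0,1]$ satisfying $\chi\vert_{]-\infty,0]}=0$ and $\chi\vert_{[1,\infty[}=1$; then $\chi\circ g$ has the same three properties, but the product form is cleaner since it avoids introducing such a $\chi$.
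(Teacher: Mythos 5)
Your proof is correct, but it takes a genuinely different route from the paper's. The paper works with the pointwise form (b) of $C^k$-regularity: it chooses bump functions $g_x\geq 0$ with $g_x(x)>0$ and $\mathrm{supp}(g_x)\subseteq U$, covers the compact set $A$ by finitely many sets $g_{x_i}^{-1}(]0,\infty[)$, forms the sum $g\coloneqq g_{x_1}+\ldots+g_{x_m}$, takes the positive minimum $a$ of $g$ on $A$, sets $W\coloneqq\{y\in M\colon g(y)>\tfrac{a}{2}\}$, and finally composes with an auxiliary one-variable cutoff $h\colon\R\to\R$ (values in $[0,1]$, $\mathrm{supp}(h)\subseteq\,]0,\infty[$, $h=1$ on $[\tfrac{a}{2},\infty[$) to get $f\coloneqq h\circ g$. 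You instead invoke the plateau form (c) of Definition \ref{oo} and glue by the product trick $f=1-\prod_{i=1}^m(1-g_{x_i})$; this eliminates both the minimum argument and the auxiliary cutoff, and your $W$ is simply the union of the plateaus $V_{x_i}$. Your key verifications are sound: since all factors lie in $[0,1]$, the product equals $1$ exactly when every factor does, so $\{f\neq 0\}=\bigcup_{i}\{g_{x_i}\neq 0\}$, and since closure commutes with finite unions, $\mathrm{supp}(f)=\bigcup_{i}\mathrm{supp}(g_{x_i})\subseteq U$; the passage from the neighbourhood in (c) to an open $V_x$ via its interior is also fine. As for what each approach buys: the paper gets by with the weaker-looking condition (b), at the price of the compactness-derived constant $a$ and a cutoff $h$ depending on it, while your argument leans on the equivalence of (b) and (c) recorded in Definition \ref{oo} and is then purely algebraic. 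Your closing alternative $\chi\circ\sum_i g_{x_i}$ is in effect a hybrid of the two: because plateau functions give $\sum_i g_{x_i}\geq 1$ on $W$, a fixed cutoff $\chi$ suffices, whereas the paper's cutoff must be adapted to $a$.
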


\begin{proof}
Für jedes $x\in A$ gibt es eine $C^k$-Funktion $g_x\colon M\to\R$ mit $g_x\geq 0$, $g_x(x)>0$ und $\text{supp}(g_x)\subseteq U$. Wir finden $x_1,\ldots,x_m\in A$ derart, dass $A\subseteq g_{x_1}^{-1}(]0,\infty[)\cup\ldots\cup g_{x_m}^{-1}(]0,\infty[)$ gilt, also $g_{x_1}(y)+\ldots+g_{x_m}(y)>0$ für alle $y\in A$. Sei $a>0$ das Minimum der $C^k$-Funktion  $g\coloneqq g_{x_1}+\ldots+g_{x_m}\colon M\to\R$ auf $A$. Dann ist
${W\coloneqq\{y\in M\colon g(y)>\frac{a}{2}\}}$
eine offene Teilmenge von $M$ mit $A\subseteq W$. Wir wählen eine $C^k$-Funktion $h\colon\R\to\R$ mit $h(\R)\subseteq[0,1]$, $\text{supp}(h)\subseteq]0,\infty[$ und $h(y)=1$ für alle $y\geq\frac{a}{2}$. Damit erhalten wir eine $C^k$-Funktion $f\coloneqq h\circ g\colon M\to\R$, für die $f(M)\subseteq[0,1]$, $\text{supp}(f)\subseteq U$ und $f\vert_W=1$ gilt.
\end{proof}

\begin{lemma}\label{bbb}
Es seien $M$ eine $C^k$-reguläre $C^k$-Mannigfaltigkeit mit rauem Rand und $A\subseteq M$ eine kompakte Teilmenge. Ist $(U_j)_{j\in J}$ eine Familie von offenen Teilmengen $U_j\subseteq M$ mit $A\subseteq\bigcup_{j\in J}U_j$, dann existieren $C^k$-Funktionen $h_1,\ldots,h_m\colon M\to\R$ mit den folgenden Eigenschaften:
\begin{itemize}
\item[(a)] $h_i(M)\subseteq[0,1]$ für alle $i\in\{1,\ldots,m\}$;
\item[(b)] $(h_1+\ldots+h_m)\vert_W=1$ für eine offene Teilmenge $W\subseteq M$ mit $A\subseteq W$;
\item[(c)] $h_1+\ldots+h_m\leq 1$;
\item[(d)]Für jedes $i\in\{1,\ldots,m\}$ existiert ein $j(i)\in J$ mit $\text{\ee{supp}}(h_i)\subseteq U_{j(i)}.$
\end{itemize}
\end{lemma}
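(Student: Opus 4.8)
The plan is to assemble the $h_i$ out of the single bump functions provided by Lemma \ref{aaa} and then to combine them with the classical telescoping-product construction, which automatically keeps the total sum below $1$ even where the supports overlap.

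First I would, for each point $x\in A$, choose an index $j(x)\in J$ with $x\in U_{j(x)}$ and apply Lemma \ref{aaa} to the compact singleton $\{x\}\subseteq U_{j(x)}$. This yields a $C^k$-function $g_x\colon M\to\R$ with $g_x(M)\subseteq[0,1]$, $\text{supp}(g_x)\subseteq U_{j(x)}$ and $g_x\vert_{V_x}=1$ for some open neighbourhood $V_x$ of $x$. As $A$ is compact and the $V_x$ cover $A$, finitely many $V_{x_1},\ldots,V_{x_m}$ already cover $A$; I write $g_i\coloneqq g_{x_i}$, $U_{j(i)}\coloneqq U_{j(x_i)}$ and $W\coloneqq V_{x_1}\cup\ldots\cup V_{x_m}$, so that $W$ is open with $A\subseteq W$.

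Next I would set
\[h_i\coloneqq g_i\prod_{l=1}^{i-1}(1-g_l)\qquad(i=1,\ldots,m),\]
each being a finite product of real-valued $C^k$-functions and hence itself a $C^k$-function (locally this is the Leibnizformel applied in charts to the continuous bilinear multiplication $\R\times\R\to\R$). An induction on $m$, using $\prod_{l=1}^{i-1}(1-g_l)-\prod_{l=1}^{i}(1-g_l)=g_i\prod_{l=1}^{i-1}(1-g_l)=h_i$, then gives the telescoping identity
\[\sum_{i=1}^{m}h_i=1-\prod_{l=1}^{m}(1-g_l).\]
From this the four claims follow directly: since every factor $1-g_l$ lies in $[0,1]$, each $h_i$ takes values in $[0,1]$, giving (a), and the product on the right is nonnegative, so $\sum_i h_i\leq 1$, giving (c); for $y\in W$ some $g_i(y)=1$, whence the product vanishes and $\sum_i h_i(y)=1$, giving (b); and because $h_i$ vanishes wherever $g_i$ does, one has $\text{supp}(h_i)\subseteq\text{supp}(g_i)\subseteq U_{j(i)}$, giving (d).

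There is no genuine obstacle here, as the whole argument is a standard partition-of-unity construction once Lemma \ref{aaa} is in hand; the only points demanding a little care are the verification of the telescoping identity above and the observation that a finite product of $C^k$-functions $M\to\R$ is again $C^k$, which reduces via charts to the Leibnizformel for real-valued $C^k$-functions on lokalkonvexe Teilmengen of $\R^n$.
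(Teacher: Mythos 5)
Your proof is correct, but it takes a genuinely different route from the paper's. The paper works with the weaker bump functions coming directly from Definition \ref{oo}\,(b) (only $g_x\geq 0$, $g_x(x)>0$, $\mathrm{supp}(g_x)\subseteq U_{j(x)}$), sums finitely many of them to obtain $g>0$ on an open neighbourhood $Q$ of $A$, then uses topological regularity of $M$ together with compactness of $A$ to interpose an open $P$ with $A\subseteq P$ and $\overline{P}\subseteq Q$, applies Lemma \ref{aaa} once to $A\subseteq P$ to get a cutoff $f$ with $\mathrm{supp}(f)\subseteq P$ and $f\vert_W=1$, and finally normalizes: $h_i=f\cdot g_{x_i}/g$ on $Q$, extended by $0$ on $M\setminus\mathrm{supp}(f)$ (a case-wise gluing that is well defined precisely because $\mathrm{supp}(f)\subseteq\overline{P}\subseteq Q$). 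You instead start from plateau bumps, i.e.\ functions equal to $1$ on a neighbourhood of each point — which you get from Lemma \ref{aaa} applied to singletons, or just as well from Definition \ref{oo}\,(c) directly — and combine them via the telescoping products $h_i=g_i\prod_{l<i}(1-g_l)$, with $\sum_{i}h_i=1-\prod_{l}(1-g_l)$. Your route buys simplicity: there is no division, hence no gluing of two case definitions and no need for the interposition $\overline{P}\subseteq Q$ (no separate appeal to regularity of the topology), and properties (a)--(c) drop out of the algebra since all factors lie in $[0,1]$; the only mild price is that your $h_i$ depend on an ordering of the indices and that you need the stronger plateau property of the bumps. The paper's route gets by with the weaker bumps of Definition \ref{oo}\,(b) and a single application of Lemma \ref{aaa}, at the cost of the normalization and gluing bookkeeping. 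Both arguments are complete; your verification of the telescoping identity, of the support inclusions $\mathrm{supp}(h_i)\subseteq\mathrm{supp}(g_i)\subseteq U_{j(i)}$, and of the $C^k$-property of products via charts and the Leibnizformel are all sound.
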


\begin{proof}
Für jedes $x\in A$ gibt es ein $j(x)\in J$ und eine $C^k$-Funktion $g_x\colon M\to\R$ mit $g_x\geq 0$, $g_x(x)>0$ und $\text{supp}(g_x)\subseteq U_{j(x)}$. Es existieren $x_1,\ldots,x_m\in A$ mit $g_{x_1}(y)+\ldots+g_{x_m}(y)>0$ für alle $y\in A$. Setzen wir $g\coloneqq g_{x_1}+\ldots+g_{x_m}\colon M\to\R$, so ist $Q\coloneqq \{y\in M\colon g(y)>0\}$ eine offene Teilmenge von $M$ mit $A\subseteq Q$. Da $M$ regulär und $A$ kompakt ist, gibt es eine offene Teilmenge $P\subseteq M$ mit $A\subseteq P$ und $\overline{P}\subseteq Q$. Lemma \ref{aaa} liefert uns eine $C^k$-Funktion $f\colon M\to\R$ mit $f(M)\subseteq[0,1]$ und $\text{supp}(f)\subseteq P$, so dass $f\vert_W=1$ für eine offene Teilmenge $W\subseteq M$ mit $A\subseteq W$ gilt. Für die $C^k$-Funktionen
\[h_i\colon M\to\R,\quad h_i(y)\coloneqq\begin{cases} f(y)\frac{g_{x_i}(y)}{g(y)} & \text{wenn }y\in Q;\\ 0 & \text{wenn } y\in M\setminus\text{supp}(f)\end{cases}\]
mit $i\in\{1,\ldots,m\}$ gilt $(h_1+\ldots+h_m)\vert_W=1$. Auch haben sie die übrigen gewünschten Eigenschaften.\end{proof}

\subsection*{Die kompakt-offene $C^k$-Topologie auf $C^k(M,E)$}
\addcontentsline{toc}{subsection}{\protect\numberline{}Die kompakt-offene $C^k$-Topologie auf $C^k(M,E)$}

Von nun an seien stets $k\in\N_0\cup\{\infty\}$ und $(M,\A)$ eine $n$-dimensionale $C^k$-Man"-nig"-fal"-tig"-keit mit rauem Rand, wenn nichts anderes gesagt wird.

Wir diskutieren die Eigenschaften der kompakt-offenen $C^k$-Topologie auf dem reellen Vektorraum $C^k(M,E)$ der $C^k$-Funktionen $f\colon M\to E$.

\begin{definition}[vgl. {\cite[Definition 4.1.2]{Gl}}]\label{jj}
Für alle $l\in\N_0$ mit $l\leq k$, stetigen Halbnormen $q$ auf $E$, Karten $\varphi\colon U_{\varphi}\to V_{\varphi}\subseteq\R^n$ von $M$ und kompakten Teilmengen $K\subseteq U_{\varphi}$ sei die Halbnorm $\lVert\cdot\rVert_{C^l,q,\varphi,K}\colon C^k(M,E)\to [0,\infty[$,
\begin{align*}
\lVert f \rVert_{C^l,q,\varphi, K}
\coloneqq\lVert f\circ\varphi^{-1}\rVert_{C^l,q,\varphi(K)}
\end{align*}
definiert. Wir versehen $C^k(M,E)$ mit der \ee{kompakt-offenen $C^k$-Topologie}; das ist die Hausdorffsche lokalkonvexe Topologie, welche durch die endlichen Summen solcher Halbnormen erzeugt wird.
\end{definition}

\begin{lemma}\label{u}
Sei $\B\subseteq \A$ ein Atlas von $M$. Dann stimmt die Topologie, welche durch die endlichen Summen der Halbnormen $\lVert\cdot\rVert_{C^l,q,\varphi,K}$ mit $l\in\N_0$ und $l\leq k$, stetigen Halbnormen $q$ auf $E$, Karten $\varphi\in\B$ und kompakten Teilmengen $K\subseteq U_{\varphi}$ erzeugt wird, mit der kompakt-offenen $C^k$-Topologie auf $C^k(M,E)$ überein.
\end{lemma}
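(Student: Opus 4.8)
The two seminorm systems are comparable in one direction for free: since $\B\subseteq\A$, every seminorm $\lVert\cdot\rVert_{C^l,q,\varphi,K}$ with $\varphi\in\B$ already occurs among the defining seminorms of the compact-open $C^k$-topology, so the $\B$-topology is coarser than (or equal to) the compact-open $C^k$-topology. The plan is therefore to establish the reverse inclusion by showing that each defining seminorm $\lVert\cdot\rVert_{C^l,q,\psi,K}$ with $\psi\in\A$, $K\subseteq U_{\psi}$ compact, $l\le k$ and $q$ a continuous seminorm on $E$ is dominated by a finite sum of $\B$-seminorms; concretely, that there exist charts $\varphi_1,\dots,\varphi_m\in\B$, compact sets $K_i\subseteq U_{\varphi_i}$ and a constant $C>0$ with
\[\lVert f\rVert_{C^l,q,\psi,K}\le C\sum_{i=1}^m\lVert f\rVert_{C^l,q,\varphi_i,K_i}\quad\text{for all }f\in C^k(M,E).\]

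First I would cover $K$: since $\B$ is an atlas, the chart domains $(U_{\varphi})_{\varphi\in\B}$ cover $M$, hence cover $K$, and by compactness finitely many $U_{\varphi_1},\dots,U_{\varphi_m}$ with $\varphi_i\in\B$ suffice. The sets $K\cap U_{\varphi_i}$ then form a finite open cover of the compact Hausdorff—hence normal—space $K$, so the shrinking lemma provides compact sets $K_i\subseteq K\cap U_{\varphi_i}$ with $K=K_1\cup\dots\cup K_m$. Because $\psi$ is injective on $U_{\psi}$, we have $\psi(K)=\bigcup_i\psi(K_i)$, so the supremum defining $\lVert f\rVert_{C^l,q,\psi,K}$ splits as a maximum over the pieces, giving $\lVert f\rVert_{C^l,q,\psi,K}=\max_{1\le i\le m}\lVert f\rVert_{C^l,q,\psi,K_i}$.

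It then remains to estimate each piece by the corresponding $\B$-seminorm. On $U_{\psi}\cap U_{\varphi_i}$ one has $f\circ\psi^{-1}=(f\circ\varphi_i^{-1})\circ g_i$ with the chart change $g_i\coloneqq\varphi_i\circ\psi^{-1}$, which is a $C^k$-function between the locally convex subsets $\psi(U_{\psi}\cap U_{\varphi_i})$ and $V_{\varphi_i}$ of $\R^n$, both with dense interior (a relatively open subset of a locally convex set with dense interior is again of this kind). Applying Lemma \ref{pp} to $g_i$ with the compact set $\psi(K_i)$ and to $h\coloneqq f\circ\varphi_i^{-1}\in C^k(V_{\varphi_i},E)$ produces a constant $C_i>0$, independent of $f$, with
\[\lVert f\rVert_{C^l,q,\psi,K_i}=\lVert (f\circ\varphi_i^{-1})\circ g_i\rVert_{C^l,q,\psi(K_i)}\le C_i\,\lVert f\circ\varphi_i^{-1}\rVert_{C^l,q,g_i(\psi(K_i))}=C_i\,\lVert f\rVert_{C^l,q,\varphi_i,K_i},\]
where $g_i(\psi(K_i))=\varphi_i(K_i)$ since $K_i\subseteq U_{\psi}$. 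Combining this with the preceding maximum and setting $C\coloneqq\max_i C_i$ yields the displayed estimate, so the compact-open $C^k$-topology is coarser than the $\B$-topology; together with the trivial inclusion, the two topologies coincide.

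The genuinely analytic input is confined to the pullback estimate, which is exactly Lemma \ref{pp} (resting on the Faà-di-Bruno bound); the remaining ingredients—normality of a compact Hausdorff space, stability of ``locally convex with dense interior'' under passage to relatively open subsets, and the factorization of $f\circ\psi^{-1}$ through the chart change—are routine. I do not expect a serious obstacle here; the only point requiring care is that the constant $C_i$ furnished by Lemma \ref{pp} depends on $g_i$ and $\psi(K_i)$ but not on $f$, so that the final inequality is a bona fide continuity estimate relating the two systems of seminorms.
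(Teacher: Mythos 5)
Your proposal is correct and matches the paper's proof essentially step for step: the trivial inclusion from $\B\subseteq\A$, a decomposition of $K$ into finitely many compact pieces each contained in a chart domain from $\B$, and the pullback estimate of Lemma \ref{pp} applied to the chart changes $\varphi_i\circ\psi^{-1}$. The only cosmetic differences are that you produce the compact pieces via normality and the shrinking lemma (the paper uses compact neighborhoods, exploiting local compactness of $K$) and that you bound by a maximum where the paper bounds by a sum.
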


\begin{proof}
Wir bezeichnen mit $\mathcal{O}_{\B}$ die erstgenannte Topologie auf $C^k(M,E)$ und mit $\mathcal{O}_{\text{co}}$ die kompakt-offene $C^k$-Topologie. Offensichtlich gilt $\mathcal{O}_{\B}\subseteq \mathcal{O}_{\text{co}}$.
Seien $l\in\N_0$ mit $l\leq k$, $q$ eine stetige Halbnorm auf $E$, $\varphi\colon U_{\varphi}\to V_{\varphi}\subseteq\R^n$ eine Karte von $M$ und $K\subseteq U_{\varphi}$ kompakt. Es gibt kompakte Teilmengen $K_1,\ldots, K_m\subseteq M$ mit $K=K_1\cup\ldots\cup K_m$, so dass für jedes $j\in\{1,\ldots,m\}$ eine Karte $\varphi_j\colon U_j\to V_j$ in $\B$ mit $K_j\subseteq U_{j}$ existiert.\ftnote{Als kompakte Menge ist $K$ lokalkompakt. Wählt man zu $x\in K$ eine Karte $\varphi_x\colon U_x\to V_x$ aus $\B$ um $x$, so existiert eine kompakte Umgebung $K_x$ von $x$ in $K$ mit $K_x\subseteq K\cap U_x$. Es gibt endlich viele $x_1,\ldots,x_m\in K$, so dass $K$ von den Inneren der Mengen $K_{x_1},\ldots,K_{x_m}$ bezüglich $K$ überdeckt wird. 
Dann ist $K_j\coloneqq K_{x_j}\subseteq U_{x_j}=:U_j$ für die Karte $\varphi_j\coloneqq \varphi_{x_j}$ mit $V_j\coloneqq V_{x_j}$.}
Nach Lemma \ref{pp} gibt es für alle $j\in\{1,\ldots,m\}$ ein $C_j>0$ derart, dass
\[\big\Vert f_j\circ \big(\varphi_j\circ \varphi^{-1}\vert_{\varphi(U_j\cap U_{\varphi})}\big)\big\Vert_{C^l,q,\varphi(K_j)}
\leq C_j\Vert f_j\Vert_{C^l,q,\varphi_j(K_j)}\]
für alle $f_j\in C^k(V_j,E).$ Daraus folgt
\begin{align*} \Vert f\Vert_{C^l,q,\varphi,K}
&=\Vert f\circ \varphi^{-1}\Vert_{C^l,q,\varphi(K)}
\leq \sum_{j=1}^m\Vert f\circ \varphi^{-1}\Vert_{C^l,q,\varphi(K_j)}\\
&=\sum_{j=1}^m \big\Vert (f\circ\varphi_j^{-1})\circ(\varphi_j\circ\varphi^{-1}\vert_{\varphi(U_j\cap U_{\varphi})}\big)\big\Vert_{C^l,q,\varphi(K_j)}\\ 
&\leq \sum_{j=1}^m C_j\Vert f\circ\varphi_j^{-1}\Vert_{C^l,q,\varphi_j(K_j)}
=\sum_{j=1}^m C_j\Vert f\Vert_{C^l,q,\varphi_j,K_j}
\end{align*}
für alle $f\in C^k(M,E)$.
Somit gilt $\mathcal{O}_{\text{co}}\subseteq\mathcal{O}_{\B}$, also Gleichheit.
\end{proof}

Sei $U\subseteq\R^n$ eine lokalkonvexe Teilmenge mit dichtem Inneren. Mit Lemma \ref{u} (für $U$ statt $M$ und $\{\text{id}_U\}$ als $\B$) folgt, dass die Definitionen \ref{r} und \ref{jj} der kompakt-offenen $C^k$-Topologie auf $C^k(U,E)$ konsistent sind.

\begin{lemma}\label{kk}
Für jede volldimensionale Untermannigfaltigkeit $A\subseteq M$ ist die Einschränkung
\[C^k(M,E)\to C^k(A,E),\quad f\mapsto f\vert_A\]
stetig.
\end{lemma}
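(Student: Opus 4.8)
The plan is to express the generating seminorms of $C^k(A,E)$ through charts of $A$ that are \emph{restrictions} of charts of $M$, and then to dominate each such seminorm by a defining seminorm of $C^k(M,E)$ using the restriction part of Lemma \ref{pp}. Since $A\subseteq M$ is a volldimensionale Untermannigfaltigkeit, Definition \ref{ccc} provides, for each $x\in A$, an an $A$ angepasste Karte $\varphi\colon U_{\varphi}\to V_{\varphi}$ of $M$ such that $\bar\varphi\coloneqq\varphi\vert_{A\cap U_{\varphi}}\colon A\cap U_{\varphi}\to W_{\varphi}$ is a chart of $A$, where $W_{\varphi}\coloneqq\varphi(A\cap U_{\varphi})\subseteq V_{\varphi}$ is locally convex with dense interior. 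These $\bar\varphi$ form a $C^k$-atlas $\B$ of $A$, so by Lemma \ref{u} the kompakt-offene $C^k$-Topologie on $C^k(A,E)$ is already generated by the seminorms $\lVert\cdot\rVert_{C^l,q,\bar\varphi,K}$ with $l\leq k$, $q$ a continuous seminorm on $E$, and $K\subseteq A\cap U_{\varphi}$ compact. It therefore suffices to bound each of these by a continuous seminorm on $C^k(M,E)$.

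The main step is the identity $\bar\varphi^{-1}=\varphi^{-1}\vert_{W_{\varphi}}$, which gives $(f\vert_A)\circ\bar\varphi^{-1}=(f\circ\varphi^{-1})\vert_{W_{\varphi}}$ for every $f\in C^k(M,E)$. Since $\varphi$ is a Karte of $M$, the function $f\circ\varphi^{-1}$ lies in $C^k(V_{\varphi},E)$, and as $W_{\varphi}\subseteq V_{\varphi}$ with both sets locally convex and of dense interior, applying Lemma \ref{pp} to the inclusion $W_{\varphi}\hookrightarrow V_{\varphi}$ and the compact set $\bar\varphi(K)\subseteq W_{\varphi}$ yields a constant $C>0$, independent of $f$, with $\lVert(f\circ\varphi^{-1})\vert_{W_{\varphi}}\rVert_{C^l,q,\bar\varphi(K)}\leq C\,\lVert f\circ\varphi^{-1}\rVert_{C^l,q,\bar\varphi(K)}$. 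Because $K\subseteq A\cap U_{\varphi}$, we have $\bar\varphi(K)=\varphi(K)$, and by Definition \ref{jj} the right-hand side equals $C\,\lVert f\rVert_{C^l,q,\varphi,K}$. Combining this with $\lVert g\rVert_{C^l,q,\bar\varphi,K}=\lVert g\circ\bar\varphi^{-1}\rVert_{C^l,q,\bar\varphi(K)}$ (Definition \ref{jj} for $A$) applied to $g=f\vert_A$, I obtain
\[\lVert f\vert_A\rVert_{C^l,q,\bar\varphi,K}\leq C\,\lVert f\rVert_{C^l,q,\varphi,K}\]
for all $f\in C^k(M,E)$, where $\varphi$ is a Karte of $M$ and $K\subseteq U_{\varphi}$ is compact, so the right-hand side is indeed a continuous seminorm on $C^k(M,E)$.

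Since $K$ is compact in $A$ and $A$ carries the subspace topology, $K$ is compact in $M$, so the displayed estimate is meaningful; along the way the same computation shows that $(f\vert_A)\circ\bar\varphi^{-1}\in C^k(W_{\varphi},E)$, hence $f\vert_A\in C^k(A,E)$ and the restriction map is well defined. Running the estimate over all generating seminorms of $C^k(A,E)$ then gives the asserted continuity. I expect no serious obstacle: the analytic content is entirely contained in Lemma \ref{pp}, and the only points that require care are the bookkeeping identity $(f\vert_A)\circ\bar\varphi^{-1}=(f\circ\varphi^{-1})\vert_{W_{\varphi}}$ and the reduction to the angepasste atlas via Lemma \ref{u}, which is precisely what lets me work with charts of $A$ that are literally restrictions of charts of $M$.
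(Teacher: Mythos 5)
Your proof is correct and follows essentially the same route as the paper: both reduce via Lemma \ref{u} to the atlas of restricted adapted charts and rest on the identity $(f\vert_A)\circ\bar\varphi^{-1}=(f\circ\varphi^{-1})\vert_{W_\varphi}$. The only (cosmetic) difference is that where you invoke Lemma \ref{pp} to obtain a constant $C$, the paper simply notes the seminorms are equal, $\Vert f\vert_A\Vert_{C^l,q,\varphi\vert_{A\cap U_\varphi},K}=\Vert f\Vert_{C^l,q,\varphi,K}$, since partial derivatives commute with restriction to $W_\varphi$; both arguments give the asserted continuity.
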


\begin{proof}
Seien $l\in\N_0$ mit $l\leq k$, $q$ eine stetige Halbnorm auf $E$, $\varphi\colon U_{\varphi}\to V_{\varphi}\subseteq\R^n$ eine an $A$ angepasste Karte von $M$ und $K\subseteq A\cap U_{\varphi}$ kompakt. Für alle $f\in C^k(M,E)$ gilt
\[\Vert f\vert_A\Vert_{C^l,q,\varphi\vert_{A\cap U_{\varphi}},K}=\Vert f\Vert_{C^l,q,\varphi,K},\]
was zusammen mit Lemma \ref{u} (für $A$ statt $M$) die Stetigkeit beweist.
\end{proof}

Eine Teilmenge $W\subseteq M$ gegeben, schreiben wir $C^k_W(M,E)$ für den Untervektorraum aller $f\in C^k(M,E)$ mit $\text{supp}(f)\subseteq W$.

\begin{lemma}\label{y}
\begin{itemize}
\item[(a)] Sei $\varphi\colon U\to V\subseteq\R^n$ eine Karte von $M$. Dann ist die Abbildung
\[\varphi^*\colon C^k(V,E)\to C^k(U,E),\quad f\mapsto f\circ\varphi\]
ein Isomorphismus von topologischen Vektorräumen mit $(\varphi^*)^{-1}=(\varphi^{-1})^*$.
\item[(b)] Für jedes $g\in C^k(M,\R)$ ist die lineare Abbildung
\[\lambda_g\colon C^k(M,E)\to C^k(M,E),\quad f\mapsto g\cdot f\]
stetig.
\item[(c)] Sei $U\subseteq M$ eine offene Teilmenge. Dann ist für jede Teilmenge $W\subseteq M$ mit $\overline{W}\subseteq U$ die lineare Abbildung
\[C^k_W(U,E)\to C^k_W(M,E),\quad f\mapsto \tilde{f}\]
mit \[\tilde{f}(x)\coloneqq\begin{cases} f(x) &\text{wenn }x\in U; \\ 0 &\text{wenn }x\in M\setminus\overline{W}\end{cases}\]
stetig.
\item[(d)] Sei $(W_j)_{j\in J}$ eine lokalendliche Familie von Teilmengen $W_j\subseteq M$. Dann ist die lineare Abbildung
\begin{align*}\prod_{j\in J}C^k_{W_j}(M,E)\to C^k(M,E),\quad (f_j)_{j\in J}\mapsto \sum_{j\in J}f_j\end{align*}
mit $\big(\sum_{j\in J}f_j\big)(x)\coloneqq\sum_{j\in J}f_j(x)$ für $x\in M$ stetig.
\end{itemize}
\end{lemma}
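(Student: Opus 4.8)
The plan is to treat the four assertions separately, in each case reducing to the generating seminorms $\Vert\cdot\Vert_{C^l,q,\psi,K}$ of the compact-open $C^k$-topology, invoking Lemma \ref{u} together with the fact that being a $C^k$-function is a local property. For part (a), the single chart $\varphi$ already forms an atlas of the open submanifold $U$, so Lemma \ref{u} (applied to $U$ with $\B=\{\varphi\}$) shows that the topology on $C^k(U,E)$ is generated by the seminorms $\Vert\cdot\Vert_{C^l,q,\varphi,K}$ with $K\subseteq U$ compact. For $f\in C^k(V,E)$ one computes directly
\[\Vert\varphi^*(f)\Vert_{C^l,q,\varphi,K}=\Vert(f\circ\varphi)\circ\varphi^{-1}\Vert_{C^l,q,\varphi(K)}=\Vert f\Vert_{C^l,q,\varphi(K)},\]
where $\varphi(K)\subseteq V$ is compact, so the right-hand side is a generating seminorm of $C^k(V,E)$; this gives continuity of $\varphi^*$. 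Reading the same identity backwards (with $\varphi^{-1}(L)$ for $L\subseteq V$ compact) gives continuity of $(\varphi^{-1})^*$, and the elementary relations $(\varphi^{-1})^*\circ\varphi^*=\text{id}$ and $\varphi^*\circ(\varphi^{-1})^*=\text{id}$ identify $\varphi^*$ as a topological isomorphism.

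For part (b), first observe that $g\cdot f$ is a $C^k$-function by the Leibniz formula applied to the scalar multiplication $\R\times E\to E$. Fixing a generating seminorm on the target and writing $g\cdot f$ in the chart $\psi$, the Leibniz formula on the compact set $\psi(K)$ yields
\[q\big(\p((g\circ\psi^{-1})(f\circ\psi^{-1}))\big)\leq\sum_{\beta\leq\alpha}\binom{\alpha}{\beta}\big|\partial^{\beta}(g\circ\psi^{-1})\big|\,q\big(\partial^{\alpha-\beta}(f\circ\psi^{-1})\big).\]
Bounding the finitely many scalar factors $|\partial^{\beta}(g\circ\psi^{-1})|$ by their suprema over $\psi(K)$ and passing to the supremum produces a constant $C=C(g,l,\psi,K)>0$ with $\Vert g\cdot f\Vert_{C^l,q,\psi,K}\leq C\Vert f\Vert_{C^l,q,\psi,K}$, so $\lambda_g$ is continuous.

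For part (c), well-definedness comes first: since $\{f\neq 0\}\subseteq\operatorname{supp}(f)\subseteq W\subseteq\overline{W}\subseteq U$, the two prescriptions for $\tilde f$ agree on the overlap $U\setminus\overline{W}$ (both vanish) and together cover $M=U\cup(M\setminus\overline{W})$; being $C^k$ on each of the two open pieces, $\tilde f$ is $C^k$ on $M$, and $\operatorname{supp}(\tilde f)=\operatorname{supp}(f)\subseteq W$ gives $\tilde f\in C^k_W(M,E)$. For continuity, fix $\Vert\cdot\Vert_{C^l,q,\psi,K}$ with $K\subseteq U_\psi$ compact. On the relatively open set $\psi(U_\psi\setminus\overline{W})$ the function $\tilde f\circ\psi^{-1}$ vanishes identically, hence so do all of its partial derivatives; therefore the supremum defining $\Vert\tilde f\Vert_{C^l,q,\psi,K}$ is already attained over $\psi(K\cap\overline{W})$. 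Since $K':=K\cap\overline{W}$ is a compact subset of $U_\psi\cap U$, on which $\psi$ restricts to a chart of the open submanifold $U$, and since $\tilde f\vert_U=f$, one obtains $\Vert\tilde f\Vert_{C^l,q,\psi,K}=\Vert f\Vert_{C^l,q,\psi\vert_{U_\psi\cap U},K'}$, a generating seminorm of $C^k(U,E)$ by Lemma \ref{u}, which establishes continuity. I expect the main obstacle to sit here: one must argue carefully that the zero-extension genuinely has vanishing derivatives at the points of $K$ lying outside $\overline{W}$, and that $\psi$ restricts to an honest chart of $U$ — for which one checks that a relatively open subset of a locally convex set with dense interior is again locally convex with dense interior.

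For part (d), local finiteness shows that near each point only finitely many $f_j$ are nonzero (as $\operatorname{supp}(f_j)\subseteq W_j$), so $\sum_{j}f_j$ is locally a finite sum of $C^k$-functions and hence $C^k$. For continuity fix $\Vert\cdot\Vert_{C^l,q,\psi,K}$; by local finiteness and compactness of $K$ the index set $J_K:=\{j\in J:W_j\cap K\neq\emptyset\}$ is finite, and for $j\notin J_K$ the support of $f_j$ avoids $K$, so $\Vert f_j\Vert_{C^l,q,\psi,K}=0$. Consequently
\[\Big\Vert\sum_{j\in J}f_j\Big\Vert_{C^l,q,\psi,K}=\Big\Vert\sum_{j\in J_K}f_j\Big\Vert_{C^l,q,\psi,K}\leq\sum_{j\in J_K}\Vert f_j\Vert_{C^l,q,\psi,K},\]
and the right-hand side is a continuous seminorm on $\prod_{j\in J}C^k_{W_j}(M,E)$ depending on only finitely many coordinates, which yields continuity of the summation map.
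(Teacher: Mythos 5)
Your proof is correct and follows essentially the same route as the paper's: chart-wise reduction to the generating seminorms via Lemma \ref{u}, the Leibniz estimate with the sup of $\vert\partial^{\beta}(g\circ\psi^{-1})\vert$ over $\psi(K)$ for (b), the identity $\Vert\tilde f\Vert_{C^l,q,\psi,K}=\Vert f\Vert_{C^l,q,\psi\vert_{U_{\psi}\cap U},K\cap\overline{W}}$ for (c), and the reduction to the finite index set $J_K$ for (d). The only cosmetic differences are that the paper dismisses (a) as obvious and phrases (b) through the initial topology with respect to the maps $f\mapsto f\circ\varphi^{-1}$, which amounts to exactly the computation you carry out.
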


\begin{proof}
\begin{itemize}
\item[(a)] Ist offensichtlich.
\item[(b)] 
Die Topologie auf $C^k(M,E)$ ist initial bezüglich den Abbildungen
\[C^k(M,E)\to C^k(V_{\varphi},E),\quad f\mapsto f\circ\varphi^{-1},\]
wobei $\varphi\colon U_{\varphi}\to V_{\varphi}\subseteq\R^n$ die Karten von $M$ durchlaufe.
Wir setzen $h_{\varphi}\coloneqq g\circ\varphi^{-1}$
und betrachten das kommutative Diagramm
\[\begin{xy}\xymatrix{
C^k(M,E)\ar[r]^{\lambda_g}\ar[d] & C^k(M,E)\ar[d]\\
C^k(V_{\varphi},E)\ar[r]^{\lambda_{h_{\varphi}}}& C^k(V_{\varphi},E)
}\end{xy}.\]
Können wir zeigen, dass
\[\lambda_{h_{\varphi}}\colon C^k(V_{\varphi},E)\to C^k(V_{\varphi},E),\quad f\mapsto h_{\varphi}\cdot f\]
stetig ist, so folgt die Stetigkeit von $\lambda_g.$
Seien dazu $l\in\N_0$ mit $l\leq k$, $q$ eine stetige Halbnorm auf $E$, $K\subseteq V_{\varphi}$ eine kompakte Teilmenge und $f\in C^k(V_{\varphi},E)$. Dann gilt
\begin{align*}q((\partial^{\alpha}(h_{\varphi}\cdot f))(x))
&\leq\sum_{\beta\leq\alpha}\binom{\alpha}{\beta} \vert\partial^{\beta}h_{\varphi}(x)\vert q(\partial^{\alpha-\beta}f(x))\\
&\leq ((l+1)!)^n\Vert h_{\varphi}\Vert_{C^l,\vert\cdot\vert,K}\Vert f\Vert_{C^l,q,K}\end{align*}
für alle $\al\leq l$ und $x\in K$ und somit
\[\Vert h_{\varphi}\cdot f\Vert_{C^l,q,K}\leq ((l+1)!)^n \Vert h_{\varphi}\Vert_{C^l,\vert\cdot\vert,K}\Vert f\Vert_{C^l,q,K}.\]
Also ist $\lambda_{h_{\varphi}}$ stetig.
\item[(c)] Seien $l\in\N_0$ mit $l\leq k$, $q$ eine stetige Halbnorm auf $E$, $\varphi\colon U_{\varphi}\to V_{\varphi}\subseteq\R^n$ eine Karte von $M$ und $K\subseteq U_{\varphi}$ kompakt. Dann ist $\varphi\vert_{U\cap U_{\varphi}}\colon U\cap U_{\varphi}\to\varphi(U\cap U_{\varphi})$ eine Karte von $U$, und es gilt
\[\Vert \tilde{f}\Vert_{C^l,q,\varphi,K}=\Vert f\Vert_{C^l,q,\varphi\vert_{U\cap U_{\varphi}},K\cap\overline{W}}\]
für alle $f\in C^k_W(U,E)$, woraus die Stetigkeit folgt.
\item[(d)] Seien $l\in\N_0$ mit $l\leq k$, $q$ eine stetige Halbnorm auf $E$, $\varphi\colon U_{\varphi}\to V_{\varphi}\subseteq\R^n$ eine Karte von $M$ und $K\subseteq U_{\varphi}$ kompakt. Wegen der Lokalendlichkeit von $(W_j)_{j\in J}$ ist $J'\coloneqq \{j\in J\colon W_j\cap K\neq\emptyset\}$ eine endliche Menge. Für alle $(f_j)_{j\in J}\in \prod_{j\in J}C^k_{W_j}(M,E)$ gilt
\[\label{gl.jj}
\bigg\Vert\sum_{j\in J}f_j\bigg\Vert_{C^l,q,\varphi, K}
\leq\sum_{j\in J'}\Vert f_j\Vert_{C^l,q,\varphi, K}
\to 0\ \text{für }(f_j)_{j\in J}\to 0,\]
was die Stetigkeit beweist.
\end{itemize}
\end{proof}

\subsection*{Whitneysche $k$-Jets auf Teilmengen von Mannigfaltigkeiten}
\addcontentsline{toc}{subsection}{\protect\numberline{}Whitneysche $k$-Jets auf Teilmengen von Mannigfaltigkeiten}

Es seien $\varphi\colon U_{\varphi}\to V_{\varphi}\subseteq\R^n$ und $\psi\colon U_{\psi}\to V_{\psi}\subseteq\R^n$ zwei Karten von $M$. Nach Satz \ref{uu} induziert der $C^k$-Diffeomorphismus
\[\psi\circ\varphi^{-1}
\colon\varphi(U_{\varphi}\cap U_{\psi})\to \psi(U_{\varphi}\cap U_{\psi})\]
für jede Teilmenge $A\subseteq M$ einen Isomorphismus von topologischen Vektorräumen
\[(\psi\circ\varphi^{-1})^*\colon\EE^k(\psi(A\cap U_{\varphi}\cap U_{\psi}),E)\to \EE^k(\varphi(A\cap U_{\varphi}\cap U_{\psi}),E).\]

\begin{definition}\label{eee}
Wir sagen, dass zwei Whitneysche $k$-Jets $f_{\varphi}\in\EE^k(\varphi(A\cap U_{\varphi}),E)$ und $f_{\psi}\in\EE^k(\psi(A\cap U_{\psi}),E)$
miteinander \ee{korrespondieren} und schreiben $f_{\varphi}\sim f_{\psi}$, wenn
\[(\psi\circ\varphi^{-1})^*\big(f_{\psi}\vert_{\psi(A\cap U_{\varphi}\cap U_{\psi})}\big)=f_{\varphi}\vert_{\varphi(A\cap U_{\varphi}\cap U_{\psi})}.\]
\end{definition}

\begin{lemma}\label{vv}
\begin{itemize}
\item[(a)] Für alle $\varphi\in\A$ und $f_{\varphi},g_{\varphi}\in\EE^k(\varphi(A\cap U_{\varphi}),E)$ gilt $f_{\varphi}\sim g_{\varphi}$ genau dann, wenn $f_{\varphi}=g_{\varphi}$.
\item[(b)] Aus $f_{\varphi}\sim f_{\psi}$ folgt $f_{\psi}\sim f_{\varphi}$ für alle $\varphi,\psi\in\A$ und $f_{\varphi}\in\EE^k(\varphi(A\cap U_{\varphi}),E)$, $f_{\psi}\in\EE^k(\psi(A\cap U_{\psi}),E)$.
\item[(c)] Für alle $\psi_1,\psi_2\in\A$ und $f_{\psi_1}\in\EE^k(\psi_1(A\cap U_{\psi_1}),E)$, $f_{\psi_2}\in\EE^k(\psi_2(A\cap U_{\psi_2}),E)$ mit $f_{\psi_1}\sim f_{\psi_2}$ gilt
\begin{align*}&(\psi_1\circ\varphi^{-1})^*\big(f_{\psi_1}\vert_{\psi_1(A\cap U_{\varphi}\cap U_{\psi_1}\cap U_{\psi_2})}\big)
=(\psi_2\circ\varphi^{-1})^*\big(f_{\psi_2}\vert_{\psi_2(A\cap U_{\varphi}\cap U_{\psi_1}\cap U_{\psi_2})}\big)
\end{align*}
für jedes $\varphi\in\A$.
\item[(d)] Seien $\varphi_1,\varphi_2\in\A$ und $f_{\varphi_1}\in\EE^k(\varphi_1(A\cap U_{\varphi_1}),E), f_{\varphi_2}\in\EE^k(\varphi_2(A\cap U_{\varphi_2}),E)$. Falls für jedes $x\in A$ eine Karte $\psi\in\A$ um $x$ und ein $f_{\psi}\in\EE^k(\psi(A\cap U_{\psi}),E)$ mit $f_{\varphi_1}\sim f_{\psi}$ und $f_{\varphi_2}\sim f_{\psi}$ existiert, dann gilt $f_{\varphi_1}\sim f_{\varphi_2}$.
\end{itemize}
\end{lemma}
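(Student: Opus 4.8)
The plan is to reduce every one of the four assertions to the functorial behaviour of the pullback maps $(\psi\circ\varphi^{-1})^*$ recorded in Satz \ref{uu}, together with the fact (Bemerkung \ref{ddd}(b)) that pullback commutes with restriction to subsets; only part (d) will in addition require the locality of Whitney jets provided by Satz \ref{tt}. Throughout, the book-keeping of overlapping domains — so that each pullback and each restriction is applied precisely on the set where it is defined — is the only thing that needs genuine care.

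For (a) I would take $\psi=\varphi$ in the definition of $\sim$. The chart change $\varphi\circ\varphi^{-1}$ is then the identity of $V_{\varphi}$, and by Satz \ref{uu}(c) its induced pullback $(\text{id}_{V_{\varphi}}\vert_{\varphi(A\cap U_{\varphi})})^*$ is the identity of $\EE^k(\varphi(A\cap U_{\varphi}),E)$. Hence the equation defining $f_{\varphi}\sim g_{\varphi}$ collapses to $g_{\varphi}=f_{\varphi}$, which gives the equivalence immediately. For (b) I would use that $\psi\circ\varphi^{-1}$ is a $C^k$-diffeomorphism carrying $\varphi(A\cap U_{\varphi}\cap U_{\psi})$ onto $\psi(A\cap U_{\varphi}\cap U_{\psi})$; by the final assertion of Satz \ref{uu}, $(\psi\circ\varphi^{-1})^*$ is then an isomorphism with inverse $(\varphi\circ\psi^{-1})^*$. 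Applying this inverse to both sides of the equation defining $f_{\varphi}\sim f_{\psi}$ yields at once the equation defining $f_{\psi}\sim f_{\varphi}$.

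For (c) the key is the cocycle identity $\psi_2\circ\varphi^{-1}=(\psi_2\circ\psi_1^{-1})\circ(\psi_1\circ\varphi^{-1})$, which holds on $\varphi(S)$ with $S\coloneqq A\cap U_{\varphi}\cap U_{\psi_1}\cap U_{\psi_2}$. By the functoriality in Satz \ref{uu}(b) this factors the pullback as $(\psi_2\circ\varphi^{-1})^*=(\psi_1\circ\varphi^{-1})^*\circ(\psi_2\circ\psi_1^{-1})^*$ on the relevant jet spaces. Restricting the relation $f_{\psi_1}\sim f_{\psi_2}$ to $S$ (Bemerkung \ref{ddd}(b)) turns it into $(\psi_2\circ\psi_1^{-1})^*(f_{\psi_2}\vert_{\psi_2(S)})=f_{\psi_1}\vert_{\psi_1(S)}$; inserting this into the factorisation produces $(\psi_2\circ\varphi^{-1})^*(f_{\psi_2}\vert_{\psi_2(S)})=(\psi_1\circ\varphi^{-1})^*(f_{\psi_1}\vert_{\psi_1(S)})$, which is exactly the asserted identity.

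Part (d) I expect to be the main obstacle, being the only genuinely global statement. Fix $x\in A\cap U_{\varphi_1}\cap U_{\varphi_2}$, choose $\psi$ and $f_{\psi}$ as in the hypothesis, and set $T\coloneqq A\cap U_{\varphi_1}\cap U_{\varphi_2}\cap U_{\psi}$, a relatively open neighbourhood of $x$. Restricting the two correspondence relations $f_{\varphi_1}\sim f_{\psi}$ and $f_{\varphi_2}\sim f_{\psi}$ to $T$ and combining them through the functoriality $(\varphi_2\circ\varphi_1^{-1})^*\circ(\psi\circ\varphi_2^{-1})^*=(\psi\circ\varphi_1^{-1})^*$ (again Satz \ref{uu}(b), with Bemerkung \ref{ddd}(b) for the restrictions) shows that $(\varphi_2\circ\varphi_1^{-1})^*(f_{\varphi_2}\vert_{\varphi_2(T)})=f_{\varphi_1}\vert_{\varphi_1(T)}$, i.e. the correspondence of $f_{\varphi_1}$ and $f_{\varphi_2}$ holds over $\varphi_1(T)$. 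As $x$ ranges over $A\cap U_{\varphi_1}\cap U_{\varphi_2}$, the sets $\varphi_1(T)$ form a relatively open cover of $\varphi_1(A\cap U_{\varphi_1}\cap U_{\varphi_2})$. Since a Whitney $k$-jet is determined by its restrictions to such a cover — this is the injectivity of the gluing isomorphism in Satz \ref{tt} — the two jets $(\varphi_2\circ\varphi_1^{-1})^*(f_{\varphi_2}\vert_{\varphi_2(A\cap U_{\varphi_1}\cap U_{\varphi_2})})$ and $f_{\varphi_1}\vert_{\varphi_1(A\cap U_{\varphi_1}\cap U_{\varphi_2})}$ agree everywhere, which is precisely $f_{\varphi_1}\sim f_{\varphi_2}$.
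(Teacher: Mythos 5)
Your proof is correct and follows essentially the same route as the paper: all four parts are reduced to Satz \ref{uu} (identity, inverse and composition of pullbacks) together with Bemerkung \ref{ddd}(b), and part (d) is finished by a covering argument over the sets $A\cap U_{\varphi_1}\cap U_{\varphi_2}\cap U_{\psi}$. The only cosmetic differences are that in (c) you factor $\psi_2\circ\varphi^{-1}$ through $\psi_1$ (using the hypothesis directly) where the paper factors $\psi_1\circ\varphi^{-1}$ through $\psi_2$, and in (d) you cite the injectivity of the gluing map from Satz \ref{tt} where the paper simply observes that jets agreeing on a relatively open cover coincide.
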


\begin{proof}
\begin{itemize}
\item[(a)] Es gilt $f_{\varphi}\sim g_{\varphi}$ genau dann, wenn
\[f_{\varphi}=(\varphi\circ\varphi^{-1})^*(g_{\varphi})=(\text{id}_{V_{\varphi}})^*(g_{\varphi})=g_{\varphi}.\]
\item[(b)] Aus 
\[(\psi\circ\varphi^{-1})^*\big(f_{\psi}\vert_{\psi(A\cap U_{\varphi}\cap U_{\psi})}\big)=f_{\varphi}\vert_{\varphi(A\cap U_{\varphi}\cap U_{\psi})}\]
folgt
\begin{align*}
f_{\psi}\vert_{\psi(A\cap U_{\varphi}\cap U_{\psi})}
&=((\psi\circ\varphi^{-1})^*)^{-1}\big(f_{\varphi}\vert_{\varphi(A\cap U_{\varphi}\cap U_{\psi})}\big)
=(\varphi\circ\psi^{-1})^*\big(f_{\varphi}\vert_{\varphi(A\cap U_{\varphi}\cap U_{\psi})}\big).
\end{align*}
\item[(c)] Mit Satz \ref{uu} (c) erhalten wir
\begin{align*}
&(\psi_1\circ\varphi^{-1})^*\big(f_{\psi_1}\vert_{\psi_1(A\cap U_{\varphi}\cap U_{\psi_1}\cap U_{\psi_2})}\big)\\
&=(\psi_1\circ\psi_2^{-1}\circ\psi_2\circ\varphi^{-1})^*\big(f_{\psi_1}\vert_{\psi_1(A\cap U_{\varphi}\cap U_{\psi_1}\cap U_{\psi_2})}\big)\\
&=(\psi_2\circ\varphi^{-1})^*\big((\psi_1\circ\psi_2^{-1})^*\big(f_{\psi_1}\vert_{\psi_1(A\cap U_{\varphi}\cap U_{\psi_1}\cap U_{\psi_2})}\big)\big)\\
&=(\psi_2\circ\varphi^{-1})^*\big(f_{\psi_2}\vert_{\psi_2(A\cap U_{\varphi}\cap U_{\psi_1}\cap U_{\psi_2})}\big).
\end{align*}
\item[(d)] Für jedes $\psi\in\A$, für welches ein $f_{\psi}\in\EE^k(\psi(A\cap U_{\psi}),E)$ mit $f_{\varphi_1}\sim f_{\psi}$ und $f_{\varphi_2}\sim f_{\psi}$ existiert, gilt
\begin{align*}
&\big((\varphi_2\circ\varphi_1^{-1})^*\big(f_{\varphi_2}\vert_{\varphi_2(A\cap U_{\varphi_1}\cap U_{\varphi_2})}\big)\big)\vert_{\varphi_1(A\cap U_{\varphi_1}\cap U_{\varphi_2}\cap U_{\psi})}\\
&=(\varphi_2\circ\varphi_1^{-1})^*\big(f_{\varphi_2}\vert_{\varphi_2(A\cap U_{\varphi_1}\cap U_{\varphi_2}\cap U_{\psi})}\big)\\
&=(\psi\circ\varphi_1^{-1})^*\big(f_{\psi}\vert_{\psi(A\cap U_{\varphi_1}\cap U_{\varphi_2}\cap U_{\psi})}\big)\\ \nopagebreak
&=f_{\varphi_1}\vert_{\varphi_1(A\cap U_{\varphi_1}\cap U_{\varphi_2}\cap U_{\psi})}.
\end{align*}
Da $\varphi_1(A\cap U_{\varphi_1}\cap U_{\varphi_2})$ 
von solchen Mengen $\varphi_1(A\cap U_{\varphi_1}\cap U_{\varphi_2}\cap U_{\psi})$ überdeckt wird, folgt
\[\big(\varphi_2\circ\varphi_1^{-1})^*\big(f_{\varphi_2}\vert_{\varphi_2(A\cap U_{\varphi_1}\cap U_{\varphi_2})}\big)
=f_{\varphi_1}\vert_{\varphi_1(A\cap U_{\varphi_1}\cap U_{\varphi_2})}.\]
\end{itemize}
\end{proof}

\begin{definition}\label{cc}
Sei $A\subseteq M$ eine Teilmenge.
\begin{itemize}
\item[(a)] Ist $\B\subseteq\A$ eine Menge von Karten für $M$ mit $A\subseteq\bigcup_{\varphi\in\B}U_{\varphi}$, so schreiben wir $\EE^k_{\B}(A,E)$ für den topologischen Untervektorraum aller Familien
\[(f_{\varphi})_{\varphi\in \B}\in\prod_{\varphi\in\B} \EE^k(\varphi(A\cap U_{\varphi}),E)\]
mit $f_{\varphi}\sim f_{\psi}$ für alle $\varphi,\psi\in\B$.
\item[(b)] Die Elemente von
\[\EE_M^k(A,E)\coloneqq \EE^k_{\A}(A,E)\]
nennen wir \ee{Whitneysche $k$-Jets} auf $A$ mit Werten in $E$.
\end{itemize}
\end{definition}

Später werden wir sehen, dass jedes $\EE_{\B}^k(A,E)$ zu $\EE_M^k(A,E)$ isomorph ist.

\begin{lemma}\label{qq}
Für jedes $(f_{\varphi,\alpha})_{\varphi\in\B,\al\leq k}\coloneqq((f_{\varphi,\alpha})_{\al\leq k})_{\varphi\in\B}\in \EE^k_{\B}(A,E)$ gilt
\[f_{\varphi,0}(\varphi(x))=f_{\psi,0}(\psi(x))\]
für alle $x\in A$ und Karten $\varphi,\psi\in\B$ um $x$. Wir erhalten eine Funktion
\[f_0\colon A\to E,\quad f_0\vert_{A\cap U_{\varphi}}\coloneqq f_{\varphi,0}\circ\varphi\vert_{A\cap U_{\varphi}}\ \text{für }\varphi\in\B.\]
\end{lemma}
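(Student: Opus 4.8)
The plan is to reduce the statement to a single observation about the pull-back operators $(\psi\circ\varphi^{-1})^*$ of Satz \ref{uu}, namely that in degree zero they are nothing but composition with the transition map. First I would recall from the definition of the pull-back that for a $C^k$-map $g\colon U\to V$ and $f=(f_{\beta})_{\bet\leq k}\in\EE^k(B,E)$ with $g(A)\subseteq B$, the $\alpha$-component of $(g\vert_A)^*(f)$ at a point $x$ equals $\sum_{\bet\leq\al}p_{\alpha,\beta}\big((\partial^{\gamma}g(x))_{\vert\gamma\vert\leq\al}\big)f_{\beta}(g(x))$. Specializing to $\alpha=0$, the inner sum runs over the single index $\beta=0$, and since $p_{0,0}=1$ by Definition \ref{zz}, the zeroth component of $(g\vert_A)^*(f)$ is exactly the function $x\mapsto f_0(g(x))$, i.e.\ $f_0\circ g$.

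Next I would unwind the correspondence relation. For charts $\varphi,\psi\in\B$ around a point $x\in A$ we have $f_{\varphi}\sim f_{\psi}$ by the very definition of $\EE^k_{\B}(A,E)$ (Definition \ref{cc}), so Definition \ref{eee} gives
\[(\psi\circ\varphi^{-1})^*\big(f_{\psi}\vert_{\psi(A\cap U_{\varphi}\cap U_{\psi})}\big)=f_{\varphi}\vert_{\varphi(A\cap U_{\varphi}\cap U_{\psi})}.\]
Applying the zeroth-component identity from the previous paragraph to $g=\psi\circ\varphi^{-1}$ and evaluating at the point $\varphi(x)$, the left-hand side reads $f_{\psi,0}\big((\psi\circ\varphi^{-1})(\varphi(x))\big)=f_{\psi,0}(\psi(x))$, while the right-hand side reads $f_{\varphi,0}(\varphi(x))$. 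This yields the desired equality $f_{\varphi,0}(\varphi(x))=f_{\psi,0}(\psi(x))$ for all $x\in A$ and all charts $\varphi,\psi\in\B$ around $x$.

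Finally, the construction of $f_0$ is immediate: given $x\in A$, choose any $\varphi\in\B$ with $x\in U_{\varphi}$ (such a chart exists because $\B$ covers $A$) and set $f_0(x)\coloneqq f_{\varphi,0}(\varphi(x))$; the equality just established shows this value is independent of the chosen chart. Hence the local prescriptions $f_0\vert_{A\cap U_{\varphi}}=f_{\varphi,0}\circ\varphi\vert_{A\cap U_{\varphi}}$ agree on the overlaps $A\cap U_{\varphi}\cap U_{\psi}$ and patch together to a well-defined function $f_0\colon A\to E$. I do not expect any genuine obstacle here: the only substantive point is the remark that $p_{0,0}=1$ collapses the pull-back to ordinary composition in degree zero, and the rest is a careful restriction to $A\cap U_{\varphi}\cap U_{\psi}$ together with reading off the zeroth coordinate.
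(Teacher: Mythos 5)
Your proposal is correct and follows essentially the same route as the paper's own proof: both extract the zeroth component of the correspondence identity $(\psi\circ\varphi^{-1})^*\big(f_{\psi}\vert_{\psi(A\cap U_{\varphi}\cap U_{\psi})}\big)=f_{\varphi}\vert_{\varphi(A\cap U_{\varphi}\cap U_{\psi})}$, evaluate at $\varphi(x)$, and use $p_{0,0}=1$ to collapse the pull-back to composition with the transition map. Your write-up merely makes explicit two points the paper leaves implicit (the degree-zero specialization of the pull-back formula and the final patching argument), which is fine.
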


\begin{proof}
Wenn wir die Komponente 0 auf beiden Seiten der Gleichung
\[f_{\varphi}\vert_{\varphi(A\cap U_{\varphi}\cap U_{\psi})}=(\psi\circ\varphi^{-1})^*\big(f_{\psi}\vert_{\psi(A\cap U_{\varphi}\cap U_{\psi})}\big)\]
an der Stelle $\varphi(x)$ auswerten, erhalten wir
\[f_{\varphi,0}(\varphi(x))=p_{0,0}(\psi(x))f_{\psi,0}(\psi(x))=f_{\psi,0}(\psi(x))\]
mit dem Fa\`a-di-Bruno-Polynom $p_{0,0}=1$.
\end{proof}

\begin{lemma}\label{aa}
\begin{itemize}
\item[(a)] Sei $l\leq k$. Für alle $(f_{\varphi})_{\varphi\in\B}\in\EE_{\B}^k(A,E)$ gilt
$(\text{\ee{pr}}_l(f_{\varphi}))_{\varphi\in\B}\in\EE^l_{\B}(A,E),\ftnote{Wenn wir die $M$ zugrunde liegende $C^l$-Mannigfaltigkeit mit rauem Rand betrachten, können wir von Whitneyschen $l$-Jets auf $A$ sprechen.}$ und die lineare Abbildung
\begin{align}\label{gl.n}\EE_{\B}^k(A,E)\to \EE_{\B}^l(A,E),\quad (f_{\varphi})_{\varphi\in\B}\mapsto (\text{\ee{pr}}_l(f_{\varphi}))_{\varphi\in\B}\end{align}
ist stetig.
\item[(b)] Sei $B\subseteq A$ eine Teilmenge. Für alle $(f_{\varphi})_{\varphi\in\B}\in\EE_{\B}^k(A,E)$ gilt $\big(f_{\varphi}\vert_{\varphi(B\cap U_{\varphi})}\big)_{\varphi\in\mathcal{B}}\in\EE^k_{\mathcal{B}}(B,E)$, und die lineare Abbildung
\begin{align}\label{gl.y}\EE^k_{\B}(A,E)\to \EE^k_{\mathcal{B}}(B,E),\quad (f_{\varphi})_{\varphi\in\B}\mapsto \big(f_{\varphi}\vert_{\varphi(B\cap U_{\varphi})}\big)_{\varphi\in\mathcal{B}}\end{align}
ist stetig.
\end{itemize}
\end{lemma}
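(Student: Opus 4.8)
The plan is to treat both parts uniformly. In each case the asserted map is simply the restriction to the subspace $\EE^k_{\B}(A,E)$ of the product map $\prod_{\varphi\in\B}(\cdot)$ whose $\varphi$-component is the map from Lemma \ref{ff}~(a), respectively Lemma \ref{ff}~(b). Once one knows that this product map carries $\EE^k_{\B}(A,E)$ into $\EE^l_{\B}(A,E)$, respectively into $\EE^k_{\B}(B,E)$ --- i.e.\ that the correspondence relation $\sim$ of Definition \ref{eee} is preserved --- continuity is automatic: by Definition \ref{cc} the codomains carry the subspace topology of the respective products, each component map is continuous by Lemma \ref{ff}, hence the product map is continuous for the product topologies, and restricting its domain to $\EE^k_{\B}(A,E)$ and its codomain to the image subspace preserves continuity. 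Thus the only real content is the preservation of correspondence, and here the engine is Bemerkung \ref{ddd}, which states that $\text{pr}_l$ and restriction both commute with the pullback operators $(g\vert_A)^*$.

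For part (a) I would fix $(f_{\varphi})_{\varphi\in\B}\in\EE^k_{\B}(A,E)$ and two charts $\varphi,\psi\in\B$. First note that $\text{pr}_l$ commutes with restriction in the trivial sense that $\text{pr}_l(f\vert_V)=(\text{pr}_l f)\vert_V$, since both operations act componentwise on the family $(f_{\alpha})_{\alpha}$. Starting from the defining equation $(\psi\circ\varphi^{-1})^*(f_{\psi}\vert_{\psi(A\cap U_{\varphi}\cap U_{\psi})})=f_{\varphi}\vert_{\varphi(A\cap U_{\varphi}\cap U_{\psi})}$, I apply $\text{pr}_l$ to both sides. On the right this yields $(\text{pr}_l f_{\varphi})\vert_{\varphi(A\cap U_{\varphi}\cap U_{\psi})}$; on the left, Bemerkung \ref{ddd}~(a) moves $\text{pr}_l$ through the pullback, producing $(\psi\circ\varphi^{-1})^*((\text{pr}_l f_{\psi})\vert_{\psi(A\cap U_{\varphi}\cap U_{\psi})})$, where the pullback is now read at level $l$. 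This is exactly the statement $\text{pr}_l(f_{\varphi})\sim\text{pr}_l(f_{\psi})$ relative to the $C^l$-manifold underlying $M$, whence $(\text{pr}_l(f_{\varphi}))_{\varphi\in\B}\in\EE^l_{\B}(A,E)$.

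For part (b) I would fix $B\subseteq A$, a family $(f_{\varphi})_{\varphi\in\B}\in\EE^k_{\B}(A,E)$ and charts $\varphi,\psi\in\B$, and write $g\coloneqq\psi\circ\varphi^{-1}$. I must verify the correspondence $f_{\varphi}\vert_{\varphi(B\cap U_{\varphi})}\sim f_{\psi}\vert_{\psi(B\cap U_{\psi})}$, which after collapsing nested restrictions amounts to $(\psi\circ\varphi^{-1})^*(f_{\psi}\vert_{\psi(B\cap U_{\varphi}\cap U_{\psi})})=f_{\varphi}\vert_{\varphi(B\cap U_{\varphi}\cap U_{\psi})}$. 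The idea is to restrict the defining equation of $\sim$ to the subset $\varphi(B\cap U_{\varphi}\cap U_{\psi})\subseteq\varphi(A\cap U_{\varphi}\cap U_{\psi})$ and invoke Bemerkung \ref{ddd}~(b) to pull this restriction through $(g\vert_{\varphi(A\cap U_{\varphi}\cap U_{\psi})})^*$; since $g$ maps $\varphi(B\cap U_{\varphi}\cap U_{\psi})$ onto $\psi(B\cap U_{\varphi}\cap U_{\psi})$, the right-hand side becomes precisely $(g\vert_{\varphi(B\cap U_{\varphi}\cap U_{\psi})})^*(f_{\psi}\vert_{\psi(B\cap U_{\varphi}\cap U_{\psi})})$, i.e.\ the required identity.

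The individual steps are short, so I expect the main obstacle to be purely the set-theoretic bookkeeping of the various intersection domains: in part (b) one must make sure that the pullback operator appearing in the $B$-correspondence really is the restriction of the one appearing in the $A$-correspondence (this is exactly what Bemerkung \ref{ddd}~(b) guarantees), and that nested restrictions such as $(f_{\psi}\vert_{\psi(B\cap U_{\psi})})\vert_{\psi(B\cap U_{\varphi}\cap U_{\psi})}=f_{\psi}\vert_{\psi(B\cap U_{\varphi}\cap U_{\psi})}$ collapse correctly. The hypothesis $A\subseteq\bigcup_{\varphi\in\B}U_{\varphi}$ from Definition \ref{cc} together with $B\subseteq A$ ensures that $B$ is covered as well, so $\EE^k_{\B}(B,E)$ is indeed meaningful; no analytic estimates are needed beyond those already packaged into Lemma \ref{ff}.
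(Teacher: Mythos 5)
Your proposal is correct and follows essentially the same route as the paper: membership of the components and continuity are obtained from Lemma \ref{ff} together with the fact that $\EE^k_{\B}(A,E)$ and its targets carry the subspace topologies of the corresponding products (the paper phrases this via a commutative diagram with topological embeddings on both sides), and the preservation of the correspondence relation $\sim$ is exactly the paper's appeal to Bemerkung \ref{ddd}~(a) bzw.~(b). Your write-up merely spells out the chart-by-chart bookkeeping that the paper leaves implicit; there is no substantive difference.
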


\begin{proof}
Sei $(f_{\varphi})_{\varphi\in\B}\in\EE^k_{\B}(A,E)$. Dann gilt $\text{pr}_l(f_{\varphi})\in\EE^l(\varphi(A\cap U_{\varphi}),E)$ bzw. $f_{\varphi}\vert_{\varphi(B\cap U_{\varphi})}$ $\in\EE^k(\varphi(B\cap U_{\varphi}),E)$ für jedes $\varphi\in\B$. Mit Bemerkung \ref{ddd} (a) bzw. (b) können wir folgern, dass $\text{pr}_l(f_{\varphi})\sim \text{pr}_l(f_{\psi})$ bzw. $f_{\varphi}\vert_{\varphi(B\cap U_{\varphi})}\sim f_{\psi}\vert_{\psi(B\cap U_{\psi})}$ für alle $\varphi,\psi\in\B$ gilt und somit $(\text{pr}_l(f_{\varphi}))_{\varphi\in\B}\in\EE^l_{\B}(A,E)$ bzw. $\big(f_{\varphi}\vert_{\varphi(B\cap U_{\varphi})}\big)_{\varphi\in\mathcal{B}}\in\EE^k_{\mathcal{B}}(B,E)$. Im kommutativen Diagramm
\[\begin{xy}\xymatrix{
\EE^k_{\B}(A,E)\ar[r]^{\eqref{gl.n}}\ar@_{(->}[d]\ar@^{->}[d] & \EE^l_{\B}(A,E)\ar@_{(->}[d]\ar@^{->}[d] \\
\prod_{\varphi\in\B}\EE^k(\varphi(A\cap U_{\varphi}),E)\ar[r] & \prod_{\varphi\in\B}\EE^l(\varphi(A\cap U_{\varphi}),E)
}\end{xy}\]
bzw.
\[\begin{xy}\xymatrix{
\EE^k_{\B}(A,E)\ar[r]^{\eqref{gl.y}}\ar@_{(->}[d]\ar@^{->}[d] & \EE^k_{\B}(B,E)\ar@_{(->}[d]\ar@^{->}[d] \\
\prod_{\varphi\in\B}\EE^k(\varphi(A\cap U_{\varphi}),E)\ar[r] & \prod_{\varphi\in\B}\EE^k(\varphi(B\cap U_{\varphi}),E)
}\end{xy}\]
stehen links und rechts topologische Einbettungen; die untere Abbildung ist nach Lemma \ref{ff} (a) bzw. (b) stetig. Demnach ist \eqref{gl.n} bzw. \eqref{gl.y} stetig.
\end{proof}

\begin{satz}\label{ww}
Es seien $A\subseteq M$ eine Teilmenge und $\B\subseteq\A$ eine Menge von Karten für $M$ mit $A\subseteq\bigcup_{\varphi\in\B}U_{\varphi}$. Dann ist die Projektion
\[\ee{pr}_{\B}\colon \EE_M^k(A,E)\to \EE^k_{\B}(A,E),\quad (f_{\varphi})_{\varphi\in \A}\mapsto (f_{\varphi})_{\varphi\in \B}\]
ein Isomorphismus von topologischen Vektorräumen.
\end{satz}

\begin{proof}
Offensichtlich ist $\text{pr}_{\B}$ stetig und linear. Wir kon"-stru"-ie"-ren die Inverse zu $\text{pr}_{\B}$.
Für jedes $\varphi\in\A$ betrachten wir den Isomorphismus
\[\prod_{\psi\in\B}(\psi\circ\varphi^{-1})^*\colon\prod_{\psi\in\B}\EE^k(\psi(A\cap U_{\varphi}\cap U_{\psi}),E)\to\prod_{\psi\in\B}\EE^k(\varphi(A\cap U_{\varphi}\cap U_{\psi}),E).\]
Sei $f=(f_{\psi})_{\psi\in\B}\in\EE^k_{\B}(A\cap U_{\varphi},E)\subseteq \prod_{\psi\in\B}\EE^k(\psi(A\cap U_{\varphi}\cap U_{\psi}),E)$. 
Für alle $\psi_1, \psi_2\in\B$ gilt
\begin{align*}
(\psi_1\circ\varphi^{-1})^*\big(f_{\psi_1}\vert_{\psi_1(A\cap U_{\varphi}\cap U_{\psi_1}\cap U_{\psi_2})}\big)
=(\psi_2\circ\varphi^{-1})^*\big(f_{\psi_2}\vert_{\psi_2(A\cap U_{\varphi}\cap U_{\psi_1}\cap U_{\psi_2})}\big)
\end{align*}
nach Lemma \ref{vv} (c), was äquivalent ist zu
\begin{align*}
\big((\psi_1\circ\varphi^{-1})^*(f_{\psi_1})\big)\vert_{\varphi(A\cap U_{\varphi}\cap U_{\psi_1}\cap U_{\psi_2})}
=\big((\psi_2\circ\varphi^{-1})^*(f_{\psi_2})\big)\vert_{\varphi(A\cap U_{\varphi}\cap U_{\psi_1}\cap U_{\psi_2})}.
\end{align*}
Somit liegt $\big(\prod_{\psi\in\B}(\psi\circ\varphi^{-1})^*\big)(f)$ im topologischen Vektorraum
\[R_{\varphi}\coloneqq\begin{Bmatrix}
&(g_{\psi})_{\psi\in\B}\in\prod_{\psi\in\B}\EE^k(\varphi(A\cap U_{\varphi}\cap U_{\psi}),E)\colon \\
&g_{\psi_1}\vert_{\varphi(A\cap U_{\varphi}\cap U_{\psi_1}\cap U_{\psi_2})}=g_{\psi_2}\vert_{\varphi(A\cap U_{\varphi}\cap U_{\psi_1}\cap U_{\psi_2})}\ \forall\psi_1,\psi_2\in\B\end{Bmatrix}.\]
Wir können daher
$\prod_{\psi\in\B}(\psi\circ\varphi^{-1})^*$
zu einer stetigen linearen Abbildung
\[\alpha_{\varphi}\colon\EE^k_{\B}(A\cap U_{\varphi},E)\to R_{\varphi}\]
einschränken. Nach Satz \ref{tt} besitzt die Abbildung
\[\EE^k(\varphi(A\cap U_{\varphi}),E)\to R_{\varphi},\quad f\mapsto\big(f\vert_{\varphi(A\cap U_{\varphi}\cap U_{\psi})}\big)_{\psi\in\B}\]
eine stetige lineare Inverse
\[\beta_{\varphi}\colon R_{\varphi}\to\EE^k(\varphi(A\cap U_{\varphi}),E).\]
Komponieren wir die stetige lineare Abbildung
\[\EE^k_{\B}(A,E)\to\prod_{\varphi\in\A}\EE^k_{\B}(A\cap U_{\varphi},E),\quad (f_{\psi})_{\psi\in\B}\mapsto \big(\big(f_{\psi}\vert_{\psi(A\cap U_{\varphi}\cap U_{\psi})}\big)_{\psi\in\B}\big)_{\varphi\in\A}\]
mit
\[\prod_{\varphi\in\A}(\beta_{\varphi}\circ\alpha_{\varphi})\colon\prod_{\varphi\in\A}\EE^k_{\B}(A\cap U_{\varphi},E)\to\prod_{\varphi\in\A}\EE^k(\varphi(A\cap U_{\varphi}),E),\]
so erhalten wir eine stetige lineare Abbildung
\[I\colon \EE^k_{\B}(A,E)\to \prod_{\varphi\in\A}\EE^k(\varphi(A\cap U_{\varphi}),E).\]  
Es seien $f=(f_{\psi})_{\psi\in\B}\in\EE^k_{\B}(A,E)$ und $\tilde{f}=(\tilde{f}_{\varphi})_{\varphi\in\A}\coloneqq I(f).$ Dann gilt nach Kon"-struk"-tion
\[\tilde{f}_{\varphi}\vert_{\varphi(A\cap U_{\varphi}\cap U_{\psi})}=(\psi\circ\varphi^{-1})^*\big(f_{\psi}\vert_{\psi(A\cap U_{\varphi}\cap U_{\psi})}\big),\]
also $\tilde{f}_{\varphi}\sim f_{\psi}$
für alle $\varphi\in\A$ und $\psi\in\B$. 
Mit Lemma \ref{vv} (d) folgt $\tilde{f}_{\varphi_1}\sim\tilde{f}_{\varphi_2}$ für alle $\varphi_1,\varphi_2\in\A$ und somit $\tilde{f}\in\EE_M^k(A,E)$. Demnach gilt
\[I(\EE^k_{\B}(A,E))\subseteq\EE_M^k(A,E).\]
Für alle $\psi\in\B$ gilt $\tilde{f}_{\psi}\sim f_{\psi}$, also $\tilde{f}_{\psi}=f_{\psi}$ nach Lemma \ref{vv} (a). Daraus folgt
\[\text{pr}_{\B}\circ I\vert^{\EE^k_M(A,E)}=\text{id}_{\EE^k_{\B}(A,E)}.\]
Angenommen, es ist $f=\text{pr}_{\B}((f_{\varphi})_{\varphi\in\A})$ mit $(f_{\varphi})_{\varphi\in\A}\in\EE^k_M(A,E)$. Für jedes $\varphi\in\A$ gilt $f_{\varphi}\sim f_{\psi}$ und $\tilde{f}_{\varphi}\sim f_{\psi}$ für alle $\psi\in\B$; folglich gilt $\tilde{f}_{\varphi}\sim f_{\varphi}$, also $\tilde{f}_{\varphi}=f_{\varphi}$. Wir erhalten
\[I\vert^{\EE^k_M(A,E)}\circ \text{pr}_{\B}=\text{id}_{\EE^k_M(A,E)},\]
womit der Satz bewiesen ist.
\end{proof}

\begin{korollar}
Es seien $\varphi\colon U\to V\subseteq\R^n$ eine Karte von $M$ und $A\subseteq U$ eine Teilmenge. Dann ist die Projektion
\begin{align*}\EE^k_M(A,E)\to\EE^k(\varphi(A),E),\quad (f_{\psi})_{\psi\in\A}\mapsto f_{\varphi}\end{align*} ein Isomorphismus von topologischen Vektorräumen.
\end{korollar}

Insbesondere gilt für jede Teilmenge $A\subseteq\R^n$ also
\[\EE^k_{\R^n}(A,E)\cong\EE^k(A,E).\]

\begin{proof}
Die obige Projektion ist die Komposition der Isomorphismen
\[\text{pr}_{\{\varphi\}}\colon \EE^k(A,E)\to \EE^k_{\{\varphi\}}(A,E),\quad (f_{\psi})_{\psi\in \A}\mapsto (f_{\varphi})_{\varphi}\]
und
\[\EE^k_{\{\varphi\}}(A,E)\to \EE^k(\varphi(A),E),\quad (f_{\varphi})_{\varphi}\mapsto f_{\varphi}.\]
\end{proof}

\subsection*{Whitneysche $k$-Jets auf Untermannigfaltigkeiten}
\addcontentsline{toc}{subsection}{\protect\numberline{}Whitneysche $k$-Jets auf Untermannigfaltigkeiten}

Wir haben bereits gesehen, dass ein Isomorphismus von topologischen Vektorräumen $C^k(A,E)\cong\EE^k(A,E)$ für jede lokalkonvexe Teilmenge $A\subseteq\R^n$ mit dichtem Inneren existiert. Aber auch für jede volldimensionale Untermannigfaltigkeit $A\subseteq M$ gilt $C^k(A,E)\cong\EE^k_M(A,E)$.

\begin{satz}\label{v}
Es seien $A\subseteq M$ eine volldimensionale Untermannigfaltigkeit und $\B$ die Menge der an $A$ angepassten Karten $\varphi\colon U_{\varphi}\to V_{\varphi}\subseteq\R^n$ von $M$. Dann ist die Abbildung
\begin{align}\label{gl.w}C^k(A,E)\to\EE^k_{\B}(A,E),\quad f\mapsto \left(\p\left(f\circ\varphi^{-1}\vert_{\varphi(A\cap U_{\varphi})}\right)\right)_{\substack{\varphi\in\B\\ \al\leq k}}\end{align}
ein Isomorphismus von topologischen Vektorräumen. Zusammen mit $\EE^k_{\B}(A,E)\cong\EE^k_M(A,E)$ folgt
\[C^k(A,E)\cong\EE^k_M(A,E).\]
\end{satz}

\begin{proof}
Ein $f \in C^k(A,E)$ gegeben, gilt $f\circ\varphi^{-1}\vert_{\varphi(A\cap U_{\varphi})}\in C^k(\varphi(A\cap U_{\varphi}),E)$ für jedes $\varphi\in\B$. Mit Bemerkung \ref{ggg} folgt
\begin{align*} 
\big(\p\big(f\circ\varphi^{-1}\vert_{\varphi(A\cap U_{\varphi}\cap U_{\psi})}\big)\big)_{\al\leq k}
&=\big(\p\big(f\circ \psi^{-1}\circ\psi\circ\varphi^{-1}\vert_{\varphi(A\cap U_{\varphi}\cap U_{\psi})}\big)\big)_{\al\leq k}\\
&=(\psi\circ\varphi^{-1})^*\big(\big(\p\big(f\circ\psi^{-1}\vert_{\psi(A\cap U_{\varphi}\cap U_{\psi})}\big)\big)_{\al\leq k}\big)
\end{align*}
für alle $\varphi,\psi\in\B$,
also
\[\big(\p\big(f\circ\varphi^{-1}\vert_{\varphi(A\cap U_{\varphi})}\big)\big)_{\al\leq k}\sim \big(\p\big(f\circ\psi^{-1}\vert_{\psi(A\cap U_{\psi})}\big)\big)_{\al\leq k},\]
womit gezeigt ist, dass \eqref{gl.w} tatsächlich Werte in $\EE^k_{\B}(A,E)$ hat.
Für jedes $f=(f_{\varphi,\alpha})_{\varphi\in\B, \al\leq k}\in\EE^k_{\B}(A,E)$ sei $f_0\colon A\to E$ die Funktion, die durch 
\[f_0\vert_{A\cap U_{\varphi}}=f_{\varphi,0}\circ\varphi\vert_{A\cap U_{\varphi}}\ \text{für }\varphi\in\B\] 
definiert ist. Nach Satz \ref{gg} ist
$f_0\circ\varphi^{-1}\vert_{\varphi(A\cap U_{\varphi})}=f_{\varphi,0}$
für jedes $\varphi\in\B$ eine $C^k$-Funktion mit
\[\p\big(f_0\circ\varphi^{-1}\vert_{\varphi(A\cap U_{\varphi})}\big)=\p f_{\varphi,0}=f_{\varphi,\alpha}\]
für alle $\al\leq k$. Somit ist $f_0$ eine $C^k$-Funktion und die Abbildung
\begin{align}\label{gl.z}\EE_{\B}^k(A,E)\to C^k(A,E),\quad f\mapsto f_0\end{align}
rechtsinvers zu \eqref{gl.w}. Für jedes $f\in C^k(A,E)$ gilt
\[\Big(\big(\p\big(f\circ\varphi^{-1}\vert_{\varphi(A\cap U_{\varphi})}\big)\big)_{\substack{\varphi\in\B\\ \al\leq k}}\Big)_0\Big\vert_{A\cap U_{\psi}}
=\big(f\circ\psi^{-1}\vert_{\psi(A\cap U_{\psi})}\big)\circ\psi\vert_{A\cap U_{\psi}}=f\vert_{A\cap U_{\psi}}\]
für alle $\psi\in\B$, also
\[\Big(\big(\p\big(f\circ\varphi^{-1}\vert_{\varphi(A\cap U_{\varphi})}\big)\big)_{\substack{\varphi\in\B\\ \al\leq k}}\Big)_0=f.\]
Folglich ist \eqref{gl.z} auch linksinvers zu \eqref{gl.w}.
Nach Satz \ref{gg} ist die Abbildung
\[\prod_{\varphi\in\B}C^k(\varphi(A\cap U_{\varphi}),E)\to \prod_{\varphi\in\B}\EE^k(\varphi(A\cap U_{\varphi}),E),\quad (f_{\varphi})_{\varphi\in\B}\to ((\p f_{\varphi})_{\al\leq k})_{\varphi\in\B}\]
ein Isomorphismus von topologischen Vektorräumen. Mit den Einbettungen von topologischen Vektorräumen
\[C^k(A,E)\hookrightarrow \prod_{\varphi\in\B}C^k(\varphi(A\cap U_{\varphi}),E),\quad f\mapsto\big(f\circ\varphi^{-1}\vert_{\varphi(A\cap U_{\varphi})}\big)_{\varphi\in\B}\]
und
\[\EE^k_{\B}(A,E)\hookrightarrow \prod_{\varphi\in\B}\EE^k(\varphi(A\cap U_{\varphi}),E),\quad f\mapsto f\]
ist das Diagramm
\[\begin{xy}\xymatrix{
C^k(A,E)\ar[r]^{\eqref{gl.w}}\ar@_{(->}[d]\ar@^{->}[d] & \EE^k_{\B}(A,E)\ar@_{(->}[d]\ar@^{->}[d] \\
\prod_{\varphi\in\B}C^k(\varphi(A\cap U_{\varphi}),E)\ar[r]^{\cong} & \prod_{\varphi\in\B}\EE^k(\varphi(A\cap U_{\varphi}),E)
}\end{xy}\]
kommutativ. Somit ist \eqref{gl.w} ein Isomorphismus von topologischen Vektorräumen.
\end{proof}

Wenn $A\subseteq M$ eine Untermannigfaltigkeit von kleinerer Dimension ist, dann lässt sich $C^k(A,E)$ in $\EE^k_M(A,E)$ einbetten.

\begin{definition}[vgl. {\cite[Definition 3.5.14]{Gl}}]
Sei $m\in\{1,\ldots,n-1\}$. Eine Teilmenge $A\subseteq M$ heißt \ee{m-dimensionale Untermannigfaltigkeit}, wenn für jedes $x\in A$ eine Karte $\varphi=(\varphi_1,\ldots,\varphi_n)\colon U_{\varphi}\to V_{\varphi}\subseteq\R^n$ für $M$ um $x$ existiert, so dass
\[\varphi(A\cap U_{\varphi})=V_{\varphi}\cap(\R^m\times\{0\})\]
gilt und
\[W_{\varphi}\coloneqq\{x\in\R^m\colon (x,0)\in V_{\varphi}\}\]
ein dichtes Inneres bezüglich $\R^m$ hat. Man beachte, dass $W_{\varphi}$ auch lokalkonvex ist. Solche Karten $\varphi$ nennt man \ee{an A angepasst}, und die Funktionen
\[\varphi_A\colon A\cap U_{\varphi}\to W_{\varphi},\quad x\mapsto(\varphi_1(x),\ldots,\varphi_m(x))\]
bilden einen $C^k$-Atlas für $A$. Mit dem zugehörigen maximalen $C^k$-Atlas ist $A$ eine $C^k$-Mannigfaltigkeit mit rauem Rand.
\end{definition}

Es seien $\B$ die Menge der an $A$ angepassten Karten für $M$ und $\widetilde{\B}\coloneqq\{\varphi_A\colon \varphi\in\A\}$. Wir konstruieren eine Einbettung von topologischen Vektorräumen
\[\EE^k_{\widetilde{\B}}(A,E)\hookrightarrow\EE^k_{\B}(A,E).\]
Wie schon in Beispiel \ref{fff} gesehen, induzieren Inklusion
\[j\colon\R^m\to\R^n,\quad x\mapsto (x,0)\]
und Projektion
\[\pi\colon \R^n=\R^m\times\R^{n-m}\to\R^m,\quad (x,y)\to x\]
für jedes $\varphi\in\B$ stetige lineare Abbildungen
\[(j\vert_{W_{\varphi}})^*\colon\EE^k(W_{\varphi}\times\{0\},E)\to\EE^k(W_{\varphi},E)\]
bzw.
\[(\pi\vert_{W_{\varphi}\times\{0\}})^*\colon\EE^k(W_{\varphi},E)\to\EE^k(W_{\varphi}\times\{0\},E).\]
Sei $(f_{\varphi})_{\varphi\in\B}\in\EE^k_{\B}(A,E)$. Für alle $\varphi,\psi\in\B$ gilt
\begin{align*}
&\big(\psi_A\circ\varphi_A^{-1}\vert_{\varphi_A(A\cap U_{\varphi}\cap U_{\psi})}\big)^*\big(((j\vert_{W_{\psi}})^*(f_{\psi}))\vert_{\psi_A(A\cap U_{\varphi}\cap U_{\psi})}\big)\\
&=\big(\pi\circ\psi\circ\varphi^{-1}\circ j\vert_{\varphi_A(A\cap U_{\varphi}\cap U_{\psi})}\big)^*\big(\big(j\vert_{\psi_A(A\cap U_{\varphi}\cap U_{\psi})}\big)^*\big(f_{\psi}\vert_{\psi(A\cap U_{\varphi}\cap U_{\psi})}\big)\big)\\
&=\big(j\circ\pi\circ\psi\circ\varphi^{-1}\circ j\vert_{\varphi_A(A\cap U_{\varphi}\cap U_{\psi})}\big)^*\big(f_{\psi}\vert_{\psi(A\cap U_{\varphi}\cap U_{\psi})}\big)\\
&=\big(\psi\circ\varphi^{-1}\circ j\vert_{\varphi_A(A\cap U_{\varphi}\cap U_{\psi})}\big)^*\big(f_{\psi}\vert_{\psi(A\cap U_{\varphi}\cap U_{\psi})}\big)\\
&=\big(j\vert_{\varphi_A(A\cap U_{\varphi}\cap U_{\psi})}\big)^*\big(\big(\psi\circ\varphi^{-1}\vert_{\varphi(A\cap U_{\varphi}\cap U_{\psi})}\big)^*\big(f_{\psi}\vert_{\psi(A\cap U_{\varphi}\cap U_{\psi})}\big)\big)\\
&=\big(j\vert_{\varphi_A(A\cap U_{\varphi}\cap U_{\psi})}\big)^*\big(f_{\varphi}\vert_{\varphi(A\cap U_{\varphi}\cap U_{\psi})}\big)\big)\\
&=\big((j\vert_{W_{\varphi}})^*(f_{\varphi})\big)\vert_{\varphi_A(A\cap U_{\varphi}\cap U_{\psi})},
\end{align*}
also
$(j\vert_{W_{\varphi}})^*(f_{\varphi})\sim(j\vert_{W_{\psi}})^*(f_{\psi}).$
Insbesondere gilt $(j\vert_{W_{\varphi}})^*(f_{\varphi})=(j\vert_{W_{\psi}})^*(f_{\psi})$ für alle $\varphi,\psi\in\B$ mit $\varphi_A=\psi_A$. Somit ist
\begin{align}\label{gl.aa}
P_{\widetilde{\B}}\colon\EE^k_{\B}(A,E)\to\EE^k_{\widetilde{\B}}(A,E),\quad (f_{\varphi})_{\varphi\in\B}\mapsto\big((j\vert_{W_{\varphi}})^*(f_{\varphi})\big)_{\varphi_A\in\widetilde{\B}}
\end{align}
sinnvoll definiert und offensichtlich auch stetig und linear. Auf ähnliche Weise sieht man, dass
\begin{align*}
J_{\B}\colon\EE^k_{\widetilde{\B}}(A,E)\to\EE^k_{\B}(A,E),\quad (f_{\varphi_A})_{\varphi_A\in\widetilde{\B}}\mapsto\big((\pi\vert_{W_{\varphi}\times\{0\}})^*(f_{\varphi_A})\big)_{\varphi\in\B}
\end{align*}
tatsächlich Werte in $\EE^k_{\B}(A,E)$ hat sowie stetig und linear ist. Mit $(j\vert_{W_{\varphi}})^*\circ (\pi\vert_{W_{\varphi}\times\{0\}})^*$ $=\text{id}_{\EE^k(W_{\varphi},E)}$ für alle $\varphi\in\B$
folgt 
\[P_{\widetilde{\B}}\circ J_{\B}=\text{id}_{\EE^k_{\widetilde{\B}}(A,E)},\]
also ist $J_{\B}$ eine Einbettung von topologischen Vektorräumen. Die Sätze \ref{ww} und \ref{v} liefern uns Isomorphismen
\[C^k(A,E)\xrightarrow{\sim}\EE^k_A(A,E)\xrightarrow{\sim}\EE^k_{\widetilde{\B}}(A,E)\hookrightarrow\EE^k_{\B}(A,E)\xrightarrow{\sim}\EE^k_M(A,E),\]
womit wir schließen:

\begin{satz}
Für jede Untermannigfaltigkeit $A\subseteq M$ gibt es eine Einbettung von topologischen Vektorräumen
\[C^k(A,E)\hookrightarrow\EE^k_M(A,E).\]
\end{satz}

\subsection*{Der Fortsetzungssatz}
\addcontentsline{toc}{subsection}{\protect\numberline{}Der Fortsetzungssatz}

\begin{satz}\label{bb}
Es seien $k\in\N_0$, $(M,\A)$ eine endlichdimensionale $C^k$-Man"-nig"-fal"-tig"-keit mit rauem Rand und $A\subseteq M$ eine abgeschlossene Teilmenge. Angenommen, eine der beiden folgenden Bedingungen ist erfüllt:
\begin{itemize}
\item[(a)] $M$ ist $C^k$-parakompakt und $A$ lokalkompakt.
\item[(b)] $M$ ist regulär und $A$ kompakt.
\end{itemize}
Dann besitzt die stetige lineare Abbildung
\[\rho^k_{A,M}\colon C^k(M,E)\to \EE^k_M(A,E),\quad f\mapsto \big((\p(f\circ\varphi^{-1}))\vert_{\varphi(A\cap U_{\varphi})}\big)_{\substack{\varphi\in\A\\ \al\leq k}}\]
eine stetige lineare Rechtsinverse.
\end{satz}

\begin{proof}
(a) Angenommen, $M$ ist $C^k$-parakompakt und $A$ lokalkompakt. Wir wählen eine $C^k$-Partition der Eins $(h_{\varphi})_{\varphi\in\A}$ auf $M$ mit $W_{\varphi}\coloneqq\text{supp}(h_{\varphi})\subseteq U_{\varphi}$ für alle Karten $\varphi\colon U_{\varphi}\to V_{\varphi}\subseteq\R^n$ in $\A$. Die Projektion
\[F_{1,\varphi}\colon\EE^k_M(A,E)\to\EE^k(\varphi(A\cap U_{\varphi}),E),\quad (f_{\psi})_{\psi\in\A}\mapsto f_{\varphi}\]
ist für jedes $\varphi\in\A$ stetig und linear. Da $\varphi(A\cap U_{\varphi})$ eine relativ abgeschlossene, lokalkompakte Teilmenge von $V_{\varphi}$ ist, existiert Satz \ref{k} zufolge ein stetiger linearer Fortsetzungsoperator
\[F_{2,\varphi}\coloneqq\Phi^k_{V_{\varphi},\varphi(A\cap U_{\varphi})}\colon\EE^k(\varphi(A\cap U_{\varphi}),E)\to C^k(V_{\varphi},E).\]
Die linearen Abbildungen
\[F_{3,\varphi}\colon C^k(V_{\varphi},E)\to C^k(U_{\varphi},E),\quad f\mapsto f\circ\varphi\]
und
\[F_{4,\varphi}\colon C^k(U_{\varphi},E)\to C^k_{W_{\varphi}}(U_{\varphi},E),\quad f\mapsto h_{\varphi}\vert_{U_{\varphi}}\cdot f\]
sowie
\begin{align*}F_{5,\varphi}\colon C^k_{W_{\varphi}}(U_{\varphi},E)\to C^k_{W_{\varphi}}(M,E),\quad f\mapsto\left(x\mapsto \begin{cases} f(x) &\text{wenn }x\in U_{\varphi}; \\ 0 &\text{sonst }\end{cases}\right)\end{align*}
sind stetig nach Lemma \ref{y}. Somit ist die Komposition
\[F_{\varphi}\coloneqq  F_{5,\varphi}\circ F_{4,\varphi}\circ F_{3,\varphi}\circ F_{2,\varphi}\circ F_{1,\varphi}\colon \EE^k_M(A,E)\to C^k_{W_{\varphi}}(M,E)\]
stetig und linear.
\begin{samepage}Zusammen mit Lemma \ref{y} (d) folgt, dass die Abbildung
\[\Phi^k_{M,A}\colon \EE^k_M(A,E)\to C^k(M,E),\quad f\mapsto \sum_{\varphi\in\A} F_{\varphi}(f)\]
stetig und linear ist. \end{samepage}

Es seien $f=(f_{\varphi})_{\varphi\in\A}\in\EE^k_M(A,E)$ mit $f_{\varphi}=(f_{\varphi,\alpha})_{\al\leq k}$ für $\varphi\in\A$ und $F\coloneqq\Phi^k_{M,A}(f).$ Weiter seien $\varphi\in\A$, $\al\leq k$ und $x\in A\cap U_{\varphi}$. Es ist zu zeigen, dass
\[(\p(F\circ\varphi^{-1}))(\varphi(x))=f_{\varphi,\alpha}(\varphi(x)).\]
Somit gilt $\rho^k_{A,M}(F)=f$, also ist
$\Phi^k_{M,A}$ rechtsinvers zu $\rho^k_{A,M}$.

Für jedes $\psi\in\A$ schreiben wir $\tilde{f_{\psi}}\coloneqq \Phi^k_{V_{\psi},\psi(A\cap U_{\psi})}(f_{\psi}).$ Dann gilt \[\partial^{\beta}\tilde{f_{\psi}}(y)=f_{\psi,\beta}(y)\] für alle $\bet\leq k$ und $y\in\psi(A\cap U_{\psi}).$ Zudem gilt
\[F(z)=\sum_{\substack{\psi\in\A\colon\\ z\in U_{\psi}}}h_{\psi}(z)(\tilde{f}_{\psi}\circ\psi)(z)\]
für alle $z\in M$, also insbesondere
\[(F\circ\varphi^{-1})(y)=\sum_{\substack{\psi\in\A\colon\\ \varphi^{-1}(y)\in U_{\psi}}}(h_{\psi}\circ\varphi^{-1})(y)\big(\tilde{f}_{\psi}\circ\psi\circ\varphi^{-1}\vert_{\varphi(U_{\varphi}\cap U_{\psi})}\big)(y)\]
für alle $y\in V_{\varphi}$. 
Mit Bemerkung \ref{ggg} gilt
\begin{align*}\big(\big(\partial^{\beta}\big(\tilde{f}_{\psi}\circ\psi\circ\varphi^{-1}\vert_{\varphi(U_{\varphi}\cap U_{\psi})}\big)\big)\vert_{\varphi(A\cap U_{\varphi}\cap U_{\psi})}\big)_{\bet\leq k}
&=(\psi\circ\varphi^{-1})^*\big(f_{\psi}\vert_{\psi(A\cap U_{\varphi}\cap U_{\psi})}\big)\\
&=f_{\varphi}\vert_{\varphi(A\cap U_{\varphi}\cap U_{\psi})}
\end{align*}
für jedes $\psi\in\A$ und somit
\begin{align*}
\big(\partial^{\beta}\big(\tilde{f}_{\psi}\circ\psi\circ\varphi^{-1}\vert_{\varphi(U_{\varphi}\cap U_{\psi})}\big)\big)(y)=f_{\varphi,\beta}(y)=\partial^{\beta}\tilde{f_{\varphi}}(y)
\end{align*}
für alle $\bet\leq k$ und $y\in\varphi(A\cap U_{\varphi}\cap U_{\psi})$.
Mit $\sum_{\psi\in\A}h_{\psi}=1$ gilt
\[\tilde{f_{\varphi}}(y)=\sum_{\psi\in\A}(h_{\psi}\circ\varphi^{-1})(y)\tilde{f_{\varphi}}(y)\]
für alle $y\in V_{\varphi}$, also
\[\p\tilde{f_{\varphi}}(y)=\sum_{\psi\in\A}\sum_{\beta\leq\alpha}\binom{\alpha}{\beta}(\partial^{\beta}(h_{\psi}\circ\varphi^{-1}))(y)\partial^{\alpha-\beta}\tilde{f_{\varphi}}(y).\]
Insgesamt folgt
\begin{samepage}
\begin{align*}
&(\p(F\circ\varphi^{-1}))(\varphi(x))\\
&=\sum_{\substack{\psi\in\A\colon\\ x\in U_{\psi}}}\sum_{\beta\leq\alpha}\binom{\alpha}{\beta}(\partial^{\beta}(h_{\psi}\circ\varphi^{-1}))(\varphi(x))\big(\partial^{\alpha-\beta}\big(\tilde{f_{\psi}}\circ\psi\circ\varphi^{-1}\vert_{\varphi(U_{\varphi}\cap U_{\psi})}\big)\big)(\varphi(x))\\
&=\sum_{\psi\in\A}\sum_{\beta\leq\alpha}\binom{\alpha}{\beta}(\partial^{\beta}(h_{\psi}\circ\varphi^{-1}))(\varphi(x))\partial^{\alpha-\beta}\tilde{f_{\varphi}}(\varphi(x))
=\p\tilde{f_{\varphi}}(\varphi(x))=f_{\varphi,\alpha}(\varphi(x)),
\end{align*}
was zu zeigen war.\end{samepage}

(b) Nun behandeln wir den Fall, dass $M$ regulär und $A$ kompakt ist. Nach Lemma \ref{bbb} existieren $C^k$-Funktionen $h_1,\ldots,h_m\colon M\to\R$ mit Werten in $[0,1]$, so dass $(h_1+\ldots+h_m)\vert_W=1$ für eine offene Teilmenge $W\subseteq M$ mit $A\subseteq W$ gilt und für jedes $i\in\{1,\ldots,m\}$ eine Karte $\varphi_i\in\A$ mit $W_i\coloneqq\text{supp}(h_i)\subseteq U_{\varphi_i}$ existiert. Für alle $i\in\{1,\ldots,m\}$ definieren wir analog wie in Beweisteil (a) die Abbildungen $F_{1,\varphi_i},\ldots,F_{5,\varphi_i}$ und erhalten stetige lineare Abbildungen
\[F_{\varphi_i}\coloneqq F_{5,\varphi_i}\circ\ldots\circ F_{1,\varphi_i}\colon\EE^k_M(A,E)\to C^k_{W_i}(M,E),\]
deren Summe
\[\Phi_{M,A}^k\colon \EE^k_M(A,E)\to C^k(M,E),\quad f\mapsto\sum_{i=1}^m F_{\varphi_i}(f)\]
stetig und linear ist.

Sind $f=(f_{\varphi,\alpha})_{\varphi\in\A, \al\leq k}\in\EE^k_M(A,E)$ und $F\coloneqq\Phi^k_{M,A}(f)$ gegeben, so zeigt man auf ähnliche Weise wie in (a), dass
\[(\p(F\circ\varphi_i^{-1}))(\varphi_i(x))=f_{\varphi_i,\alpha}(\varphi_i(x))\]
für alle $i\in\{1,\ldots,m\}$, $\al\leq k$ und $x\in A\cap U_{\varphi_i}$. Somit gilt
\[\text{pr}_{\B}\circ\rho_{A,M}^k\circ\Phi_{M,A}^k=\text{pr}_{\B}\]
mit $\B\coloneqq\{\varphi_1,\ldots,\varphi_m\}$ und der Projektion $\text{pr}_{\B}\colon\EE^k_M(A,E)\to\EE^k_{\B}(A,E)$. Da $\text{pr}_{\B}$ ein Isomorphismus ist, folgt
\[\rho_{A,M}^k\circ\Phi_{M,A}^k=\text{id}_{\EE^k_M(A,E)}.\]
\end{proof}

\begin{korollar}
Es seien $k\in\N_0$, $M$ eine endlichdimensionale $C^k$-Man"-nig"-fal"-tig"-keit mit rauem Rand und $A\subseteq M$ eine abgeschlossene Un"-ter"-man"-nig"-fal"-tig"-keit. Angenommen, eine der beiden folgenden Bedingungen ist erfüllt:
\begin{itemize}
\item[(a)] $M$ ist $C^k$-parakompakt und $A$ lokalkompakt.
\item[(b)] $M$ ist regulär und $A$ kompakt.
\end{itemize}
Dann besitzt die Einschränkung
\begin{align}\label{gl.ll}C^k(M,E)\to C^k(A,E),\quad f\mapsto f\vert_A\end{align}
eine stetige lineare Rechtsinverse.
\end{korollar}

\begin{proof}
Sei $\B$ die Menge der an $A$ angepassten Karten von $M$.
Wenn $A$ eine volldimensionale Untermannigfaltigkeit ist, betrachten wir das kommutative Diagramm von topologischen Vektorräumen und stetigen linearen Abbildungen
\[\begin{xy}\xymatrix{
& \EE^k_M(A,E)\ar[rd]^{\text{pr}_{\B}}_{\cong} & \\
C^k(M,E)\ar[rd]_{\eqref{gl.ll}}\ar[ru]^{\rho_{A,M}^k} & & \EE^k_{\B}(A,E).\\
& C^k(A,E)\ar[ru]^{\cong}_{\eqref{gl.w}} &
}\end{xy}\]
Da $\rho^k_{A,M}$ eine stetige lineare Rechtsinverse besitzt, gilt dies auch für \eqref{gl.ll}.

Wenn $A$ eine Untermannigfaltigkeit von kleinerer Dimension ist, betrachten wir das kommutative Diagramm
\[\begin{xy}\xymatrix{
& \EE^k_M(A,E)\ar[r]_{\cong}^{\text{pr}_{\B}} & \EE^k_{\B}(A,E)\ar[rd]^{P_{\widetilde{\B}}} & \\
C^k(M,E)\ar[ru]^{\rho_{A,M}^k}\ar[rd]_{\eqref{gl.ll}} & & & \EE^k_{\widetilde{\B}}(A,E) \\
& C^k(A,E)\ar[r]^{\cong} & \EE^k_A(A,E)\ar[ru]^{\cong}_{\text{pr}_{\widetilde{\B}}} &
}\end{xy}\]
mit $\widetilde{\B}\coloneqq\{\varphi_A\colon\varphi\in\B\}$ und $P_{\widetilde{\B}}$ wie in \eqref{gl.aa}.
Da $\rho^k_{A,M}$ und $P_{\widetilde{\B}}$ beide stetige lineare Rechtsinverse besitzen, hat auch \eqref{gl.ll} eine solche.
\end{proof}

\begin{satz}
Es seien $E$ metrisierbar, $(M,\A)$ eine endlichdimensionale $C^{\infty}$-Man"-nig"-fal"-tig"-keit mit rauem Rand und $A\subseteq M$ eine abgeschlossene Teilmenge.
Angenommen, eine der beiden folgenden Bedingungen ist erfüllt:
\begin{itemize}
\item[(a)] $M$ ist $C^{\infty}$-parakompakt und $A$ lokalkompakt.
\item[(b)] $M$ ist regulär und $A$ kompakt.
\end{itemize}
Dann ist die stetige lineare Abbildung
\[\rho^{\infty}_{A,M}\colon C^{\infty}(M,E)\to \EE^{\infty}_M(A,E),\quad f\mapsto \big((\p(f\circ\varphi^{-1}))\vert_{\varphi(A\cap U_{\varphi})}\big)_{\substack{\begin{subarray}{l}\varphi\in\A\\ \alpha\in(\N_0)^n\end{subarray}}}\]
surjektiv.
\end{satz}

\begin{proof}
Wir beweisen die Aussage unter der Voraussetzung (a); für (b) verfährt man auf ähnliche Weise.
Ein $\varphi\in\A$ gegeben, können wir nach Satz \ref{ll} für alle $f\in\EE^{\infty}(\varphi(A\cap U_{\varphi}),E)$ eine Fortsetzung $F_{2,\varphi}(f)\in C^{\infty}(V_{\varphi},E)$ wählen. So erhalten wir eine (nicht notwendig stetige oder lineare) Abbildung 
\[F_{2,\varphi}\colon\EE^{\infty}(\varphi(A\cap U_{\varphi}),E)\to C^{\infty}(V_{\varphi},E),\quad f\mapsto F_{2,\varphi}(f).\]
Analog wie im Beweis von Satz \ref{bb} konstruiert man eine Abbildung
\[\Phi^{\infty}_{M,A}\colon \EE^{\infty}_M(A,E)\to C^{\infty}(M,E),\]
so dass für alle $f\in\EE^{\infty}_M(A,E)$ die $C^{\infty}$-Funktion $\Phi^{\infty}_{M,A}(f)$ ein Urbild von $f$ unter $\rho^{\infty}_{A,M}$ ist.
\end{proof}

\begin{korollar}
Es seien $E$ metrisierbar, $M$ eine endlichdimensionale $C^{\infty}$-Man"-nig"-fal"-tig"-keit mit rauem Rand und $A\subseteq M$ eine abgeschlossene Un"-ter"-man"-nig"-fal"-tig"-keit.
Angenommen, eine der beiden folgenden Bedingungen ist erfüllt:
\begin{itemize}
\begin{samepage}
\item[(a)] $M$ ist $C^{\infty}$-parakompakt und $A$ lokalkompakt.
\item[(b)] $M$ ist regulär und $A$ kompakt.
\end{samepage}
\end{itemize}
Dann ist die Einschränkung
\[C^{\infty}(M,E)\to C^{\infty}(A,E),\quad f\mapsto f\vert_A\]
surjektiv.
\end{korollar}

\subsection*{Der projektive Limes $\EE^{\infty}_M(A,E)=\lim_{\leftarrow}\EE^k_M(A,E)$}
\addcontentsline{toc}{subsection}{\protect\numberline{}Der projektive Limes $\EE^{\infty}_M(A,E)=\lim_{\leftarrow}\EE^k_M(A,E)$}

Es seien $(M,\A_{\infty})$ eine endlichdimensionale $C^{\infty}$-Mannigfaltigkeit mit rauem Rand und $A\subseteq M$ eine Teilmenge. Für jedes $k\in\N_0$ sei $(M,\A_k)$ die $M$ zugrunde liegende $C^k$-Mannigfaltigkeit mit rauem Rand. Dann gilt $\A_0\supseteq\A_1\supseteq\A_2\supseteq\ldots$ und $\A_k\supseteq\A_{\infty}$ für alle $k\in\N_0$ sowie $\EE^k_M(A,E)=\EE^k_{\A_k}(A,E)$. Für alle $k\leq l\leq m$ in $\N_0\cup\{\infty\}$ betrachten wir die Projektionen
\[\pi^m_{k,l}\colon \EE^{l}_{\A_m}(A,E)\to\EE^{k}_{\A_m}(A,E),\quad (f_{\varphi})_{\varphi\in\A_m}\to (\text{pr}_k(f_{\varphi}))_{\varphi\in\A_m}\]
sowie
\[p^k_{m,l}\colon \EE^k_{\A_l}(A,E)\overset{\cong}{\longrightarrow}
\EE^k_{\A_m}(A,E),\quad (f_{\varphi})_{\varphi\in\A_l}\mapsto (f_{\varphi})_{\varphi\in\A_m}\]
und erhalten stetige lineare Abbildungen
\[q_{k,l}\coloneqq (p_{l,k}^k)^{-1}\circ \pi^l_{k,l}\colon \EE^l_M(A,E)\to\EE^k_M(A,E).\]
Wir haben also eine projektive Folge von lokalkonvexen Räumen
\[\EE^0_M(A,E)\overset{q_{0,1}}{\longleftarrow}\EE^1_M(A,E)\overset{q_{1,2}}{\longleftarrow}\EE^2_M(A,E)\overset{q_{2,3}}{\longleftarrow}\ldots\]
gegeben.

\begin{satz}
Es gilt 
\[\EE^{\infty}_M(A,E)=\lim_{\substack{\longleftarrow}}\EE^k_M(A,E),\]
wobei wir die Abbildungen $q_{k,\infty}\colon\EE^{\infty}_M(A,E)\to \EE^k_M(A,E)$ für $k\in\N_0$ als Limesabbildungen verwenden.
\end{satz}

\begin{proof}
Zunächst betrachten wir die projektive Folge von lokalkonvexen Räumen
\begin{align}\label{gl.r}\EE^0_{\A_{\infty}}(A,E)\overset{\pi^{\infty}_{0,1}}{\longleftarrow}\EE^1_{\A_{\infty}}(A,E)\overset{\pi^{\infty}_{1,2}}{\longleftarrow}\EE^2_{\A_{\infty}}(A,E)\overset{\pi^{\infty}_{2,3}}{\longleftarrow}\ldots.\end{align}
Da die Abbildungen
\[\EE^{\infty}_M(A,E)\hookrightarrow\prod_{\varphi\in\A_{\infty}}\EE^{\infty}(\varphi(A\cap U_{\varphi}),E),\quad f\mapsto f\]
und
\[\EE^{\infty}(\varphi(A\cap U_{\varphi}),E)\hookrightarrow\prod_{k\in\N_0}\EE^k(\varphi(A\cap U_{\varphi}),E),\quad f\mapsto (\text{pr}_k(f))_{k\in\N_0}\]
für $\varphi\in\A_{\infty}$ Einbettungen von topologischen Vektorräumen sind, ist die Co-"-Ein"-schrän"-kung von
\[\EE^{\infty}_M(A,E)\hookrightarrow \prod_{\varphi\in\A_{\infty}}\prod_{k\in\N_0}\EE^k(\varphi(A\cap U_{\varphi}),E)\cong \prod_{k\in\N_0}\prod_{\varphi\in\A_{\infty}}\EE^k(\varphi(A\cap U_{\varphi}),E)\]
in den Raum $\prod_{k\in\N_0}\EE^k_{\A_{\infty}}(A,E)$, nämlich die Abbildung
\[\phi\colon\EE^{\infty}_M(A,E)\hookrightarrow \prod_{k\in\N_0}\EE^k_{\A_{\infty}}(A,E),\quad f\mapsto(\pi_{k,\infty}^{\infty}(f))_{k\in\N_0},\]
eine Einbettung von topologischen Vektorräumen. Nun ist aber das Bild
\begin{align*}\phi(\EE^{\infty}_M(A,E))&=\{(\pi_{k,\infty}^{\infty}(f))_{k\in\N_0}\colon f\in\EE^{\infty}_M(A,E)\}\\
&=\bigg\{(f_k)_{k\in\N_0}\in \prod_{k\in\N_0}\EE^k_{\A_{\infty}}(A,E)\colon \pi^{\infty}_{k,k+1}(f_{k+1})=f_k\text{ für alle }k\in\N_0\bigg\}\end{align*}
der projektive Standardlimes von \eqref{gl.r}. Somit gilt
\[\EE^{\infty}_M(A,E)=\lim_{\substack{\longleftarrow}}\EE_{\A_{\infty}}^k(A,E)\]
mit den Projektionen $\pi^{\infty}_{k,\infty}\colon\EE^{\infty}_M(A,E)\to\EE^k_{\A_{\infty}}(A,E)$ für $k\in\N_0$ als Limesabbildungen. Für alle $k\leq l\leq m$ in $\N_0\cup\{\infty\}$ gilt
\begin{align*}
q_{k,l}\circ q_{l,m}
&=((p_{l,k}^k)^{-1}\circ\pi_{k,l}^l)\circ((p_{m,l}^l)^{-1}\circ\pi_{l,m}^m)
=(p_{l,k}^k)^{-1}\circ(\pi_{k,l}^l\circ(p_{m,l}^l)^{-1})\circ\pi_{l,m}^m\\
&=(p_{l,k}^k)^{-1}\circ ((p_{m,l}^k)^{-1}\circ\pi_{k,l}^m)\circ\pi_{l,m}^m
=(p_{m,l}^k\circ p_{l,k}^k)^{-1}\circ(\pi_{k,l}^m\circ\pi_{l,m}^m)\\
&=(p_{m,k}^k)^{-1}\circ\pi_{k,m}^m=q_{k,m}.
\end{align*}
Für jedes $k\in\N_0$ gilt insbesondere $q_{k,k+1}\circ q_{k+1,\infty}=q_{k,\infty}$, und auf ähnliche Weise sieht man, dass $q_{k,k+1}\circ(p_{\infty,k+1}^{k+1})^{-1}=(p_{\infty,k}^k)^{-1}\circ\pi_{k,k+1}^{\infty}$. Somit ist das Diagramm
\[\begin{xy}\xymatrix{& \EE^{\infty}_M(A,E)\ar[ldd]^/0.8cm/{q_{k,\infty}}\ar[rdd]_/0.8cm/{q_{k+1,\infty}}\ar[ld]_{\pi^{\infty}_{k,\infty}}\ar[rd]^{\pi^{\infty}_{k+1,\infty}} & \\ \EE^{k}_{\A_{\infty}}(A,E)\ar[d]^{\cong}_{(p^k_{\infty,k})^{-1}}& & \EE^{k+1}_{\A_{\infty}}(A,E)\ar[ll]^{\pi^{\infty}_{k,k+1}}\ar[d]_{\cong}^{(p_{\infty,k+1}^{k+1})^{-1}}\\
\EE^{k}_M(A,E) & & \EE^{k+1}_M(A,E)\ar[ll]^{q_{k,k+1}}
}\end{xy}\]
kommutativ, woraus die Behauptung des Satzes folgt.
\end{proof}

\section*{Schlussbemerkung}
\addcontentsline{toc}{section}{\protect\numberline{}Schlussbemerkung}
In der Literatur wird vielfach der Frage nachgegangen, wann Whitneysche $k$-Jets oder $C^k$-Funktionen auf abgeschlossenen Teilmengen $A\subseteq\R^n$ mittels stetigen linearen Fortsetzungsoperatoren zu $C^k$-Funktionen auf $\R^n$ fortgesetzt werden können. 
Einen lokalkonvexen Raum $E$ gegeben, haben wir in Anlehnung an Whitney \cite{Wh} die Existenz von stetigen linearen Fortsetzungsoperatoren \[\EE^k(A,E)\to C^k(\R^n,E)\] für $k\in\N_0$ und beliebige abgeschlossene Teilmengen $A\subseteq\R^n$ gezeigt.

Glöckner hat im Falle eines folgenvollständigen lokalkonvexen Raumes $E$ für $k\in\N_0\cup\{\infty\}$ und abgeschlossene konvexe Teilmengen $A\subseteq\R^n$ mit dichtem Inneren nachgewiesen, dass die Existenz eines stetigen linearen Fortsetzungsoperators $C^k(A,E)\to C^k(\R^n,E)$ für $E$-wertige Funktionen aus der Existenz eines stetigen linearen Fortsetzungsoperators $C^k(A,\R)\to C^k(\R^n,\R)$ für skalarwertige Funktionen folgt (s. \cite[Theorem 1.3]{Gl2}).
Als Hilfsmittel gebraucht der Autor topologische Tensorprodukte von lokalkonvexen Räumen, wie sie von Grothendieck \cite{Gr} entwickelt wurden. Er stellt fest, dass
\[C^k(A,\widetilde{E})\cong \widetilde{E} \widetilde{\otimes}_{\eps} C^k(A,\R)\]
als lokalkonvexer Raum gilt mit der Vervollständigung $\widetilde{E}$ von $E$.
Ist ein stetiger linearer Fortsetzungsoperator
$\Phi\colon C^k(A,\R)\to C^k(\R^n,\R)$
gegeben, dann kann man die stetige lineare Abbildung
\[\text{id}_{\widetilde{E}} \widetilde{\otimes}_{\eps} \Phi\colon \underbrace{ \widetilde{E} \widetilde{\otimes}_{\eps} C^k(A,\R)}_{\cong C^k(A,\widetilde{E})}\to \underbrace{\widetilde{E} \widetilde{\otimes}_{\eps} C^k(\R^n,\R)}_{\cong C^k(\R^n,\widetilde{E})}\]
mit einer Abbildung
$C^k(A,\widetilde{E})\to C^k(\R^n,\widetilde{E})$
identifizieren, die sich
zu einem stetigen linearen Fortsetzungsoperator
$C^k(A,E)\to C^k(\R^n,E)$
einschränken lässt (s. \cite[Proof of Theorem 1.3]{Gl2}).
Mit obiger Aussage folgert Glöckner aus Ergebnissen von \cite{Ro, Fr, Bi}, dass stetige lineare Fortsetzungsoperatoren \[C^{\infty}(A,E)\to C^{\infty}(\R^n,E)\] für alle abgeschlossenen konvexen Teilmengen $A\subseteq\R^n$ mit dichtem Inneren existieren (s. \cite[Corollary 1.6]{Gl2}).

Als weiterführende Frage könnte man untersuchen, für welche lokalkonvexen Räume $E$ und abgeschlossenen Teilmengen $A\subseteq\R^n$ die Beziehung
\[\EE^k(A,E)\cong E \widetilde{\otimes}_{\eps} \EE^k(A,\R)\]
erfüllt ist, um weitere Rückschlüsse auf die Existenz stetiger linearer Fortsetzungsoperatoren für vektorwertige Jets zu ziehen.

Ähnliche Methoden mit $\eps$-Produkten nutzen z.B. Kruse \cite{Kr} oder Frerick et al. \cite{FJW}, um den Zusammenhang zwischen Fortsetzbarkeit und schwacher Fortsetzbarkeit von vektorwertigen Funktionen aus anderen Funktionsklassen zu untersuchen.

Tidten \cite{Ti} klassifiziert alle abgeschlossenen Teilmengen $A\subseteq\R^n$, die einen stetigen linearen Fortsetzungsoperator
\[\EE^{\infty}(A,\R)\to C^{\infty}(\R^n,\R)\]
erlauben.
Für weitere diesbezügliche Studien verweisen wir auf die Arbeit von Frerick \cite{Fr}. Bierstone \cite{Bi} diskutiert hinreichende Bedingungen für die Existenz von stetigen linearen Fortsetzungsoperatoren $\EE^{\infty}(A,\R)\to C^{\infty}(\R^n,\R)$.

Seeley \cite{Se} hat für $k\in\N_0\cup\{\infty\}$ stetige lineare Fortsetzungsoperatoren
\[\Phi^k\colon C^k(\R^{n-1}\times[0,\infty[,\R)\to C^k(\R^n,\R)\]
konstruiert, welche in dem Sinne natürlich sind, dass für alle $k\leq l$ in $\N_0\cup\{\infty\}$ das Diagramm
\[\begin{xy}\xymatrix{
C^l(\R^{n-1}\times[0,\infty[,\R)\ar@_{(->}[d]\ar@^{->}[d]\ar[r]^{\ \ \ \ \ \ \ \Phi^l} & C^l(\R^n,\R)\ar@_{(->}[d]\ar@^{->}[d]\\ C^k(\R^{n-1}\times[0,\infty[,\R)\ar[r]^{\ \ \ \ \ \ \ \Phi^k} &C^k(\R^n,\R)
}\end{xy}\]
kommutativ ist. Hanusch \cite{Ha} verallgemeinert Seeleys Ansatz für vektorwertige Funktionen, indem er u.a. für $k\in\N_0\cup\{\infty\}$ und $m\in\{0,\ldots,n\}$ stetige lineare Fortsetzungsoperatoren
\[C^k(\R^{n-m}\times [0,\infty[^m,E)\to C^k(\R^n,E)\]
definiert (s. \cite[Corollary 1.8]{Gl2}).

Federer beweist einen Fortsetzungssatz für Jets mit Werten in einem normierten Raum $E$ (s. \cite[S. 225]{Fed}, zit. nach \cite{Ba}). Baldi \cite{Ba} setzt Jets mit Werten in einer Skala von Banachräumen mithilfe von Glättungsoperatoren fort. 
Margalef-Roig und Outerelo Dominguez diskutieren die Fortsetzung von Jets, die auf Oktanten in einem Banachraum $E$ definiert sind, also auf Mengen der Form \[\{x\in E\colon \lambda_1(x)\geq 0,\ldots,\lambda_m(x)\geq 0\}\] mit linear unabhängigen stetigen linearen Funktionalen $\lambda_1,\ldots,\lambda_m\colon E\to \R$ (s. \cite[Theorem 2.1.36]{Ma}).

Fefferman \cite{Fe} behandelt ein zu unserer Fragestellung komplementäres Problem. Für $k\in\N$ sei der Raum $C^k_b(\R^n,\R)$ aller $C^k$-Funktionen $f\in C^k(\R^n,\R)$, deren partielle Ableitungen $\p f$ für alle $\al\leq k$ beschränkt sind, mit der Norm \[C^k_b(\R^n,\R)\to[0,\infty[,\quad f\mapsto \max_{\substack{\al\leq k}}\sup_{x\in\R^n}\vert\p f(x)\vert\]
versehen. Für jede abgeschlossene Teilmenge $A\subseteq\R^n$ hat die surjektive lineare Abbildung
\begin{align}\tag{$\ast$}\label{gl.mm}C^k_b(\R^n,\R)\to C^k_b(\R^n,\R)\vert_A,\quad f\mapsto f\vert_A\end{align}
mit $C^k_b(\R^n,\R)\vert_A\coloneqq\{f\vert_A\colon f\in C^k_b(\R^n,\R)\}$ einen abgeschlossenen Kern, so dass man $C^k_b(\R^n,\R)\vert_A$ mit der zugehörigen Quotientennorm versehen kann. Fefferman beweist die Existenz einer stetigen linearen Rechtsinversen von \eqref{gl.mm}.

Darüber hinaus haben wir in der vorliegenden Arbeit gezeigt, dass für $k\in\N_0$ stetige lineare Fortsetzungsoperatoren
\[\EE^k_M(A,E)\to C^k(M,E)\]
für abgeschlossene Teilmengen $A\subseteq M$ einer endlichdimensionalen $C^k$-Man"-nig"-fal"-tig"-keit $M$ mit rauem Rand existieren, wenn $M$ $C^k$-parakompakt und $A$ lokalkompakt (oder $M$ regulär und $A$ kompakt) ist. Für weitere Fortsetzungsresultate, die Funktionen auf Mannigfaltigkeiten zum Gegenstand haben, sei auf die Arbeiten von Glöckner \cite{Gl2} sowie Roberts und Schmeding \cite{Ro} verwiesen.

\addcontentsline{toc}{section}{\protect\numberline{}Literatur}

Johanna Jakob, Universität Paderborn, Warburger Str. 100, 33098 Paderborn,
Germany

\end{document}